\newcommand\setItemnumber[1]{\setcounter{enum\romannumeral\@enumdepth}{\numexpr#1-1\relax}}
\newcommand{\OPN}[1]{\operatorname{#1}}
\newcommand{\row}[2]{#1_1, \ \ldots , \ #1_{#2}}
\newcommand{\rowto}[2]{#1 = 1, \ \ldots , \ #2}
\newcommand{\rowft}[1]{0, \ \ldots , \ #1}
\newcommand{\puo}[2]{#1^{(#2)}}
\newcommand*{\defeq}{\stackrel{{\OPN{def}}}{=}}
\theoremstyle{plain}
\newtheorem{thm}{Theorem}[section]
\newtheorem{lem}[thm]{Lemma}
\newtheorem{prop}[thm]{Proposition}
\newtheorem{cor}[thm]{Corollary}
\newtheorem{prob}[thm]{Problem}
\theoremstyle{definition}
\newtheorem{deff}[thm]{Definition}
\newtheorem*{note}{Note}
\newtheorem{ex}[thm]{Example}
\newtheorem{fa}[thm]{Fact}
\makeatletter\@namedef{subjclassname@2020}{\textup{2020} Mathematics Subject Classification} \makeatother
\title{On inductive and inverse limits of systems\\ of compact metric spaces\\ and applications}
\author[K. Urba\'{s}]{Kamil Urba\'{s}}
\address{
Jagiellonian University\\{}Institute of Mathematics\\{}
6 Stanis\l{}awa \L{}ojasiewicza Street\\{}30-348 Krak\'{o}w\\{}Poland}
\email{kamil.urbas@im.uj.edu.pl}
\begin{document}

\begin{abstract}
The aim of this paper is to prove the existence of inductive and inverse limits of direct and inverse systems in a certain category of compact metric spaces as well as of compact metric groups.
Some applications are presented.
\end{abstract}

\subjclass[2020]{Primary 18A30, 54E45; Secondary 18F60, 22C05.}
\keywords{Inductive limit; inverse limit; compact metric space; compact metric group; Gromov--Hausdorff space; Gromov's theorem; Gromov--Hausdorff convergence.}
\thanks{This work was supported by the National Center of Science [grant 2021/03/Y/ST1/00072]}

\maketitle

\section{Introduction}

The direct system and its limit are the basic concepts of Category Theory that are also applicable in~Topology.
Nevertheless, in case of nearly all categories in which objects are topological spaces, the~inductive limit of~a~direct system (if it exists) composed of compact spaces, in general, is not a compact space.
For~this~very~reason, any general theorem on the compactness of the limit of a direct system seems to be especially valuable and interesting.
In this paper we present one of such theorems.
More precisely, we prove that in the category of all compact metric spaces with non--expansive surjections as arrows, all~direct systems have inductive limits (see Theorem \ref{main}).
Especially surprising is the fact that the object of the limit, up~to~isometry, does not depend on~the~choice of~the~arrows of the direct system, but only on the objects that appear in it.
As we will see in~Section~\ref{Section 2}, the~result mentioned above is a consequence of the Gromov's Theorem on the existence of~a~universal compact metric space for families of such spaces (see \cite{Gromov}).
Thanks to this theorem, we also prove the theorem on existence of inverse limit in the same category (see Theorem~\ref{inverse limit - main}).
This time the object of the limit also turns out, up to isometry, not to depend on the choice of the arrows of the system.
However, and it is worth emphasizing, not every inverse system has inverse limit in this category, as it is with the category of compact spaces with continuous mappings as arrows.

The aforementioned Theorems \ref{main}, \ref{inverse limit - full version} turn out to have natural counterparts in the category of compact metric groups with bi--invariant metrics (see Theorems \ref{main_groups}, \ref{inverse limit - main_bi_mg}), and in the category of compact metric groups with left--invariant metrics (see Theorems \ref{main_inductive_compact_metric_groups}, \ref{main_inverse_compact_metric_groups}).
The proofs of these theorems in case of categories with groups with bi--invariant metrics, admittedly, are based on their previously mentioned basic versions, however they are more subtle - it takes some effort to get a group structure on the boundary objects.
For~groups with left--invariant metrics, the situation becomes even more complicated (see Example \ref{ex left-inv}).

We also present some application of Theorem \ref{main} to the construction of so-called \emph{iso--derivatives} of compact metric spaces.
For each such space, we define its \emph{iso--height}, analogously to the case of the theory of Cantor--Bendixson of scattered spaces.
In point (\ref{trivial Iso group}) of Proposition \ref{prop before def of iht} and in Theorem \ref{all of countable heights} we show that the iso--height can be any countable ordinal and only such.
Details on this can be found in Section \ref{Section 5}.

We believe, that our new theorem on the existence of an inductive limit will find wider applications in~the~future.
A few potential suggestions for using it are outlined in the last, \ref{Section 6}'th Section.

\subsection*{Notation and terminology} \( \)\\
\indent To avoid any confusion, we introduce some agreements concerning few notions which we will use in~this~paper.
First of all, we make an agreement that \(0 \notin \mathbb{N}\) and \(\overline{\mathbb{N}} = \mathbb{N} \cup \{\infty\}\).
Denote \(\mathbb{R}_{+} = [0\), \(\infty)\). \linebreak
If \(A\) and \(B\) are non--empty subsets of a compact metric space \((X\), \(d)\), then the \emph{distance} between them is the~value 
\[
\displaystyle \OPN{dist}_d(A, \, B) \defeq \inf\{d(a, \, b) \colon \ a \in A, \, b \in B\}.
\]
Similarly, the diameter of the set \(A\) is denoted by
\[
\OPN{diam}_d(A) \defeq \sup \{d(a_1, \, a_2) \colon \ a_1, \, a_2 \in A\}.
\]
We will say that \(A\) is \emph{\(\varepsilon\)--dense} in \(X\), for some positive \(\varepsilon\), if 
for every \(x \in X\) one can find \(a \in A\) such~that \(d(x\),~\(a) < \varepsilon\).

The main results of this paper are on the existence of the inductive and inverse limits of direct and inverse systems in~some categories.
Therefore, it will not hurt to recall what do we mean by a direct system~and an~inverse system.
Let then \(\Sigma\) be an upward directed set.
By a \emph{direct system} in a category \(\textsf{A}\) we call a~family of~objects \(\{A_\sigma\}_{\sigma \in \Sigma}\) in \(\textsf{A}\) together with a family \(\{\alpha_{\sigma}^{\tau} \colon A_\sigma \longrightarrow A_\tau\}_{\sigma \leqslant \tau}\) of morphisms in~\(\textsf{A}\), such~that \(\alpha_{\sigma}^{\sigma} = \OPN{id}_{A_\sigma}\) 
and \(\alpha_{\varrho}^{\tau} \circ \alpha_{\sigma}^{\varrho} = \alpha_{\sigma}^{\tau}\), where \(\sigma\), \(\varrho\) and \(\tau \in \Sigma\) are such that \(\sigma \leqslant \varrho \leqslant \tau\).
If \(\Sigma = \mathbb{N}\), one uses simplified notation: \(\{\alpha_n \colon A_n \longrightarrow A_{n + 1}\}_{n \in \mathbb{N}}\) (and then \(\alpha_{m}^{n} = \alpha_{n - 1} \circ \ldots \circ \alpha_{m}\) for \(m < n\)).
The system defined in this way will be denoted briefly as~a~pair \((\{A_\sigma\}_{\sigma \in \Sigma}\),~\(\{\alpha_{\sigma}^{\tau} \colon A_\sigma \longrightarrow A_\tau\}_{\sigma \leqslant \tau})\).
As for an inverse system, since it is~a~concept dual to a direct system, we define it symmetrically to that above.
In fact, to~define~it, one~must reverse the direction of all of the above arrows.
To be more explicit, an \textit{inverse system} in~a~category \(\textsf{A}\) is~a~family of~objects \(\{A^\sigma\}_{\sigma \in \Sigma}\) in \(\textsf{A}\) together with a family \(\{\alpha_{\tau}^{\sigma} \colon A^\tau \longrightarrow A^\sigma\}_{\sigma \leqslant \tau}\) of morphisms in~\(\textsf{A}\), such~that \(\alpha_{\sigma}^{\sigma} = \OPN{id}_{A^\sigma}\) 
and \(\alpha_{\varrho}^{\sigma} \circ \alpha_{\tau}^{\varrho} = \alpha_{\tau}^{\sigma}\), where \(\sigma\), \(\varrho\) and \(\tau \in \Sigma\) are such that \(\sigma \leqslant \varrho \leqslant \tau\).
Similarly as~before, the system defined in this way will be denoted briefly as~a~pair \((\{A^\sigma\}_{\sigma \in \Sigma}\),~\(\{\alpha_{\tau}^{\sigma} \colon A^\tau \longrightarrow A^\sigma\}_{\sigma \leqslant \tau})\).
In~our~case, we will~be~interested in the category of all compact metric spaces with non--expansive mappings as arrows.
We denote this category by \(\textsf{cmn}\).

We will also prove the~above-mentioned main result in the version for compact metric groups with \linebreak \emph{bi--invariant metrics} (that~is, metrics that are both left-- and right--invariant).
For this purpose, it will~be convenient to agree on what a metric group is.
By a \emph{metric group} we mean a metrizable topological group \(G\) together with a~fixed left--invariant
metric \(d\) inducing its~topology.
Of course, a \emph{metric group with bi--invariant metric} is the metric group for which the~above~metric \(d\) is bi--invariant.
On the basis of~the~Birkhoff--Kakutani Theorem, for there to exist a compatible left--invariant metric on the topological group, it is necessary and sufficient for this group to be first countable and \(T_2\).
In turn, for compact groups, the given condition is~equivalent to the existence of a compatible bi--invariant metric on~this~group, which is an easy and immediate consequence of the aforementioned Birkhoff--Kakutani Theorem.

\subsection*{Gromov--Hausdorff space}\( \)\\
\indent The main background for our considerations will be the Gromov--Hausdorff space introduced in \(1981\) by~Gromov in \cite{Gromov}.
For this reason, in this section we provide a brief introduction to this space.
The~Reader interested in diving deeper into this subject is referred to 
\cite{
Metric Geometry,
Chowdhury and Memoli,
Gromov,
Gromov2,
Gromov3,
Ivanov Iliadis and Tuzhilin - 1,
Ivanov Iliadis and Tuzhilin - 2,
Ivanov Nikolaeva and Tuzhilin - 1,
Ivanov Nikolaeva and Tuzhilin - 2,
Ivanov and Tuzhilin - 1,
Ivanov and Tuzhilin - 2,
Ivanov and Tuzhilin - 3,
Ivanov and Tuzhilin - 4,
Pakhomova,
Petersen,
Tsvetnikov}.

In the same paper, Gromov introduced an intuitive method of measuring distances between metric spaces, being a natural generalisation of Hausdorff distance.

\begin{deff} \label{dGH}
Let \((X\), \(d)\) and \((Y\), \(\varrho)\) be metric spaces.
We define the \emph{Gromov--Hausdorff distance} between these spaces to be the value
\[
\displaystyle d_{GH}((X, \, d), \, (Y, \, \varrho)) \defeq \inf_{(Z, \, \zeta)} \, \inf_{\varphi, \, \psi}\zeta_H(\varphi(X), \, \psi(Y)),
\]
where \((Z\), \(\zeta)\) is a metric space, and \(\varphi \colon (X\), \(d) \longrightarrow (Z\), \(\zeta)\), \(\psi \colon (Y\), \(\varrho) \longrightarrow (Z\), \(\zeta)\) are isometric embeddings, while \(\zeta_H(\varphi(X), \, \psi(Y))\) is the Hausdorff distance between \(\varphi(X)\), \(\psi(Y)\) with respect to \(\zeta\).
\end{deff}

This was the original approach for measuring distances between metric spaces proposed by Gromov.
However, it turns out that in the above formula one can consider only disjoint unions of our metric spaces, instead of all metric spaces containing them.

For the purposes of introducing this equivalent formula for \(d_{GH}\), and the considerations that follow, it~will~be convenient to define so-called \emph{admissible metrics} on countable disjoint unions of metric spaces.

\begin{deff}
Let \(\displaystyle \{(X_n\), \(d_n)\}_{n = 1}^{n = \infty}\) be a sequence of compact metric spaces and fix \(1 < N \leqslant \infty\). \linebreak
By an~\emph{admissible metric} on \(\bigsqcup_{n = 1}^{n = N} X_n\) we mean any metric \(\varrho\) on it such that \(\varrho|_{X_n \times X_n} = d_n\) for~every \nolinebreak \(n \leqslant N\).

Assume that the sequence of diameters of \(X_n\) (where \(n \leqslant N\)) is monotone decreasing, and that there exists a sequence \(\{r_n\}_{n < N}\) 
of positive numbers such that
\(2r_{n - 1} \geqslant r_n\) for every \(1 < n < N\), and \(r_n \geqslant \frac{1}{2}\OPN{diam}_{d_n}X_n\) for every \(n < N\).
\begin{enumerate}
\item An admissible metric \(\varrho\) on \(\bigsqcup_{n = 1}^{N} X_n\) is called to be \emph{with parameter(s)} \(\{r_n\}_{n < N}\) (or \(\row{r}{N - 1}\) for~finite~\(N\) and \(r_1\), \(r_2\), \(\ldots\) for \(N = \infty\)), if \(\varrho(x_k\),~\(x_l) = r_k\) for all \(x_k \in X_k\), \(x_l \in X_l\), where \(k\),~\(l \in \mathbb{N}\) are~such that \(k < l \leqslant N\).
\item Similarly, an admissible metric \(\varrho\) on \(\bigsqcup_{n \in \overline{\mathbb{N}}} X_n\) is called to be \emph{with parameters} \(\{r_n\}_{n \in \mathbb{N}}\) (or \(r_1\), \(r_2\),~\(\ldots\)), if, in addition to the above, \(\varrho(x_k\),~\(x_\infty) = r_k\) for~all \(x_k \in X_k\), \(x_\infty \in X_\infty\), where \(k \in \mathbb{N}\).
\end{enumerate}
\end{deff}

The following is a kind of folklore.

\begin{prop}
If \((X\), \(d)\) and \((Y\), \(\varrho)\) are metric spaces, then
\[
\displaystyle d_{GH}((X, \, d), \, (Y, \, \varrho)) = \inf_{d^{X \sqcup Y}}d_H^{X \sqcup Y} (X, \, Y),
\]
where \(d^{X \sqcup Y}\) is an admissible metric on disjoint union \(X \sqcup Y\).
\end{prop}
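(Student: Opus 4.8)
The plan is to prove the two inequalities separately. For ``$\leqslant$'', observe that any admissible metric $d^{X\sqcup Y}$ on $X\sqcup Y$ makes $\big(X\sqcup Y,\, d^{X\sqcup Y}\big)$ into a metric space in which the two canonical inclusions $X\hookrightarrow X\sqcup Y$ and $Y\hookrightarrow X\sqcup Y$ are isometric embeddings --- this is precisely the content of admissibility, i.e.\ of the conditions $d^{X\sqcup Y}|_{X\times X}=d$ and $d^{X\sqcup Y}|_{Y\times Y}=\varrho$. Hence this particular metric space, with these particular embeddings, is one of the configurations over which the infimum defining $d_{GH}$ is taken, so $d_{GH}((X,d),(Y,\varrho))\leqslant d_H^{X\sqcup Y}(X,Y)$; taking the infimum over all admissible metrics on $X\sqcup Y$ gives the desired inequality. (In particular, the infimum on the right-hand side is over a non-empty set, since the construction in the next paragraph produces admissible metrics.)

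For ``$\geqslant$'', fix a metric space $(Z,\zeta)$ and isometric embeddings $\varphi\colon(X,d)\to(Z,\zeta)$, $\psi\colon(Y,\varrho)\to(Z,\zeta)$, and fix $\delta>0$. Define $d^{X\sqcup Y}_\delta$ on $X\sqcup Y$ to equal $d$ on $X\times X$, to equal $\varrho$ on $Y\times Y$, and to be given by $d^{X\sqcup Y}_\delta(x,y)=d^{X\sqcup Y}_\delta(y,x)=\zeta(\varphi(x),\psi(y))+\delta$ for $x\in X$ and $y\in Y$. The summand $\delta$ is there to force the cross-distances to be strictly positive even when $\varphi(x)=\psi(y)$; without it one would in general obtain only a pseudometric. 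I would then verify that $d^{X\sqcup Y}_\delta$ is an honest admissible metric: symmetry, positivity, and the restriction conditions are immediate, and the triangle inequality needs to be checked only for the ``mixed'' triples $\{x,x',y\}$ with $x,x'\in X$, $y\in Y$, and $\{x,y,y'\}$ with $x\in X$, $y,y'\in Y$, where it follows from the triangle inequality in $(Z,\zeta)$ together with the elementary fact that adding a fixed nonnegative constant to every cross-distance of a glued metric cannot destroy it.

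It then remains to estimate the Hausdorff distance: for each $x\in X$ one has $\inf_{y\in Y} d^{X\sqcup Y}_\delta(x,y)=\delta+\inf_{y\in Y}\zeta(\varphi(x),\psi(y))\leqslant\delta+\zeta_H(\varphi(X),\psi(Y))$, and symmetrically for each $y\in Y$, whence $d_H^{X\sqcup Y}(X,Y)\leqslant\zeta_H(\varphi(X),\psi(Y))+\delta$. Consequently $\inf_{d^{X\sqcup Y}}d_H^{X\sqcup Y}(X,Y)\leqslant\zeta_H(\varphi(X),\psi(Y))+\delta$; letting $\delta\to0^+$ and then taking the infimum over all $(Z,\zeta)$ and all pairs of isometric embeddings $\varphi,\psi$ yields $\inf_{d^{X\sqcup Y}}d_H^{X\sqcup Y}(X,Y)\leqslant d_{GH}((X,d),(Y,\varrho))$, which together with the first part gives equality. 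I expect the only point requiring genuine care to be the $\delta$-perturbation and the accompanying verification that $d^{X\sqcup Y}_\delta$ satisfies the triangle inequality; the rest is routine bookkeeping.
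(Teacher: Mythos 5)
Your argument is correct, and it is the standard one: the paper states this proposition as folklore and supplies no proof, so there is nothing to compare against beyond noting that your write-up fills the gap properly. The one genuinely delicate point --- that pulling back $\zeta$ along $\varphi\sqcup\psi$ yields only a pseudometric when $\varphi(X)\cap\psi(Y)\neq\emptyset$, remedied by adding $\delta$ to the cross--distances and then letting $\delta\to 0^{+}$ --- is exactly the point you isolate and verify, and your check that the perturbation preserves the triangle inequality for the mixed triples is sound.
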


\begin{note}
It is worth to highlight that the \(\inf\) in the above proposition (thus, in  Definition \ref{dGH} as~well) is never taken over the empty set, therefore Gromov--Hausdorff distance is well defined.
What is more, if~\(d_{GH}((X\),~\(d)\),~\((Y\),~\(\varrho)) = 0\) for some compact metric spaces \((X\), \(d)\), \((Y\), \(\varrho)\), then \(X\) and \(Y\) are isometric.
\end{note}

We will mostly be using this second formula for Gromov--Hausdorff distance.
We are able now to define the Gromov--Hausdorff space.

\begin{deff}
\emph{Gromov--Hausdorff space} is the metric space \((\mathfrak{GH}\), \(d_{GH})\), where \(\mathfrak{GH}\) is the space of~all~compact metric spaces up to isometry, while \(d_{GH}\) is as above.
\end{deff}

In aforementioned paper, i.e. \cite{Gromov}, Gromov characterized compact subsets of his new space.
(He did it in~an~impressive way, we strongly encourage the reader to read his proof.)
This characterization has found very important applications.
Inter alia, Gromov succeeded to characterize groups of polynomial growth with~its help.
It will also be one of the most important tools for us.

Before stating this theorem, following Gromov, we need to introduce the following concept.

\begin{deff}
We call a family \(\mathfrak{K} \subset \mathfrak{GH}\) \emph{uniformly compact}, if it is uniformly bounded, as well as for~every \(\varepsilon > 0\) there exists \(N \in \mathbb{N}\) such that every element of \(\mathfrak{K}\) contains \(N\)--element \(\varepsilon\)--dense set. 
\end{deff}

\begin{thm}[Gromov, \(1981\)]\( \) \label{Gromov} \\
If \(\mathfrak{K} \subset \mathfrak{GH}\), then the following conditions are equivalent:
\begin{enumerate}
\item \(\mathfrak{K}\) is relatively compact in \(\mathfrak{GH}\),
\item \(\mathfrak{K}\) is uniformly compact,
\item there exists a compact space \((Y\), \(\varrho)\) that contains an isometric copy of every element of \(\mathfrak{K}\).
\end{enumerate}
\end{thm}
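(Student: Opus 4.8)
\emph{Plan.} I would prove the cycle \((3)\Rightarrow(1)\Rightarrow(2)\Rightarrow(3)\); the first two implications are routine and the third is Gromov's construction, which is the real content. For \((3)\Rightarrow(1)\): suppose a compact \((Y,\varrho)\) contains an isometric copy of every element of \(\mathfrak{K}\). Let \(\mathcal{H}(Y)\) be the set of nonempty closed subsets of \(Y\) with the Hausdorff metric \(\varrho_H\); by the Blaschke selection theorem \(\mathcal{H}(Y)\) is compact (it is complete and totally bounded because \(Y\) is). The map \(q\colon\mathcal{H}(Y)\to\mathfrak{GH}\) sending \(A\) to the isometry class of \((A,\varrho|_A)\) is \(1\)-Lipschitz: taking \(Z=Y\) with the two inclusions in Definition \ref{dGH} gives \(d_{GH}\bigl((A,\varrho|_A),(B,\varrho|_B)\bigr)\leqslant\varrho_H(A,B)\). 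Hence \(\mathfrak{K}\subseteq q(\mathcal{H}(Y))\), which is compact, so \(\overline{\mathfrak{K}}\) is compact, i.e.\ \(\mathfrak{K}\) is relatively compact.

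For \((1)\Rightarrow(2)\): the set \(\overline{\mathfrak{K}}\) is compact. Since \((X,d)\mapsto\OPN{diam}_dX\) is \(2\)-Lipschitz on \(\mathfrak{GH}\) (compare two diameters through any admissible metric on the disjoint union), it is bounded on \(\overline{\mathfrak{K}}\), so \(\mathfrak{K}\) is uniformly bounded. Fix \(\varepsilon>0\); cover the compact \(\overline{\mathfrak{K}}\) by finitely many balls of radius \(\varepsilon/3\) centred at compact spaces \(Z_1,\ldots,Z_k\), and let \(N\) bound the sizes of \(\varepsilon/3\)-dense subsets of all the \(Z_i\). Given \(X\in\mathfrak{K}\), pick \(i\) with \(d_{GH}(X,Z_i)<\varepsilon/3\) and (by the folklore formula) an admissible metric on \(X\sqcup Z_i\) with Hausdorff distance \(<\varepsilon/3\); transporting an \(\varepsilon/3\)-dense \(N\)-element subset of \(Z_i\) to nearby points of \(X\) yields an \(\varepsilon\)-dense subset of \(X\) with at most \(N\) points. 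So \(\mathfrak{K}\) is uniformly compact.

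The implication \((2)\Rightarrow(3)\) is where the work lies. Fix a uniform diameter bound \(D\) and, for each \(n\), a number \(N_n\) such that every \(X\in\mathfrak{K}\) has an \(N_n\)-element \(\tfrac1n\)-dense set; enlarging the \(N_n\) (to their partial sums) we may assume they are increasing and that each \(X\) carries a dense sequence \((x^X_i)_{i\in\mathbb{N}}\) whose first \(N_n\) terms are \(\tfrac1n\)-dense for every \(n\). Put \(\iota(X):=\bigl(d_X(x^X_i,x^X_j)\bigr)_{i,j}\in[0,D]^{\mathbb{N}\times\mathbb{N}}\). This product is compact and metrizable, so \(\mathcal{M}:=\overline{\{\iota(X):X\in\mathfrak{K}\}}\) is a compact metric space; every \(a\in\mathcal{M}\) is a pseudometric on \(\mathbb{N}\) still satisfying the condition that each index lies within \(\tfrac1n\) of \(\{1,\ldots,N_n\}\) (these are all closed conditions), so the completion \(X_a\) of \((\mathbb{N},a)\) is a compact metric space, and \(X_{\iota(X)}\) is isometric to \(X\). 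A routine correspondence estimate, using that the first \(N_n\) indices are \(\tfrac1n\)-dense in every \(X_a\), shows \(a\mapsto X_a\) is continuous from \(\mathcal{M}\) into \(\mathfrak{GH}\); in particular \(\{X_a:a\in\mathcal{M}\}\supseteq\mathfrak{K}\) is already a compact subset of \(\mathfrak{GH}\). Choose a countable set \(\{a^{(k)}\}_{k\in\mathbb{N}}\) dense in \(\mathcal{M}\); then \(\{X_{a^{(k)}}\}_k\) is totally bounded in \(d_{GH}\), and it suffices to produce one compact space containing an isometric copy of each \(X_{a^{(k)}}\) — for then, given an arbitrary \(X\in\mathfrak{K}\), writing \(\iota(X)=\lim_l a^{(k_l)}\) gives \(d_{GH}(X,X_{a^{(k_l)}})\to0\), and a Blaschke selection argument applied to the copies of \(X_{a^{(k_l)}}\) inside that compact space yields a closed subset of it isometric to \(X\).

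To build such a space, enumerate \(\{a^{(k)}\}\) so that, for a chosen sequence \(\rho_m\downarrow0\), the first \(M_m\) terms form a \(\rho_m\)-net of \(\{X_{a^{(k)}}\}\) in \(d_{GH}\). Now define an admissible metric on \(\bigsqcup_kX_{a^{(k)}}\) by recursion on \(k\): when \(M_m<k\leqslant M_{m+1}\), attach \(X_{a^{(k)}}\) to some earlier \(X_{a^{(j)}}\) with \(j\leqslant M_m\) and \(d_{GH}(X_{a^{(k)}},X_{a^{(j)}})<\rho_m\), by fixing an admissible metric on \(X_{a^{(k)}}\sqcup X_{a^{(j)}}\) of Hausdorff distance \(<\rho_m\) and extending it to the whole union via the shortest-path-through-\(X_{a^{(j)}}\) rule; one checks this does not disturb the metric of any previously placed piece. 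Following the chains of attachments, every piece lies within Hausdorff distance \(\sum_{l\geqslant m}\rho_l\) of the finite set \(\bigsqcup_{k\leqslant M_m}X_{a^{(k)}}\), which combined with the uniform \(\tfrac1n\)-nets shows the whole union is totally bounded; its completion is the desired compact space, each \(X_{a^{(k)}}\) sitting inside it isometrically. The hard part is precisely this last construction: producing a single admissible metric on the countable disjoint union that restricts correctly to every piece and is at the same time totally bounded, and it is here that the uniform bounds on diameters and on \(\varepsilon\)-net sizes are indispensable (for a general bounded family no such compact space exists — e.g.\ the \(m\)-point spaces with all distances \(1\) cannot all embed isometrically into one compact space).
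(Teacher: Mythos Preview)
The paper does not supply its own proof of this theorem: it is stated as Gromov's 1981 result, attributed to \cite{Gromov}, and the reader is explicitly referred there (``He did it in an impressive way, we strongly encourage the reader to read his proof''). So there is nothing to compare your argument against in the paper itself.

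That said, your outline is a sound version of the standard proof. The implications \((3)\Rightarrow(1)\) and \((1)\Rightarrow(2)\) are handled correctly and efficiently. For \((2)\Rightarrow(3)\), your two-step reduction --- first encode spaces by distance matrices in a compact cube to extract a countable \(d_{GH}\)-dense family, then glue that countable family into a single totally bounded metric space by successive amalgamations along near-isometric pieces --- is exactly the idea behind Gromov's construction. The points you flag as needing care (that the recursive ``shortest-path-through-the-attachment-piece'' extension yields a genuine metric that does not distort earlier pieces, and that summability of the \(\rho_m\) together with the uniform \(\tfrac1n\)-nets gives total boundedness) are real but routine once stated; your sketch identifies them correctly. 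The final Blaschke-selection step, passing from the countable dense family to an arbitrary \(X\in\mathfrak{K}\), is also correct.
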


We now move to the next chapter in which we prove the main result of this paper.

\section{Compact metric spaces} \label{Section 2}
\subsection{Inductive limit}
Before we state the first theorem regarding categorical limits, it is worth recalling the definition of an \emph{inductive} (or~\emph{direct}) \emph{limit} of a direct system.

\begin{deff} \label{Inductive limit}
Let \((\{A_\sigma\}_{\sigma \in \Sigma}\), \(\{\alpha_{\sigma}^{\tau} \colon A_\sigma \longrightarrow A_\tau\}_{\sigma \leqslant \tau})\) be a direct system in~a~category \(\textsf{A}\).
By its \emph{inductive} (or \emph{direct}) \emph{limit} we mean an object \(A\) in \(\textsf{A}\) together with a family of~morphisms \(\{\alpha_\sigma \colon A_\sigma \longrightarrow~A\}\) in \(\textsf{A}\) such~that
\begin{enumerate}
\item \(\alpha_\tau \circ \alpha_{\sigma}^{\tau} = \alpha_\sigma\) for \(\sigma \leqslant \tau\) in \(\Sigma\), \label{Inductive limit 1}
\end{enumerate}
and the following statement is fulfilled:
\begin{enumerate}
\item[(2)]\label{Inductive limit 2} if \(A'\) is an object in \(\textsf{A}\) and a family \(\{\xi_{\sigma} \colon A_\sigma \longrightarrow A'\}_{\sigma \in \Sigma}\) of morphisms in \(\textsf{A}\) are such~that
\(\xi_\tau \circ \alpha_{\sigma}^{\tau} = \xi_\sigma\) for \(\sigma \leqslant \tau\) in \(\Sigma\),
then there exists a unique morphism \(\nu \colon A \longrightarrow A'\) such that \(\nu \circ \alpha_\sigma = \xi_\sigma\) for any \nolinebreak \(\sigma \in \Sigma\).
\end{enumerate}
\end{deff}

Of course, if the inductive limit exists (which is not always true), its object is unique up~to~isomorphism, as the arrow \(\nu\) from the above definition is to be unique.

As a warm--up, we will first prove the main theorem for direct system indexed by \(\mathbb{N}\).
The proof for~this~case is slightly less complex than the proof for general case, however, unlike the general one, it can be proven constructively.

\begin{thm} \label{main_N}
Let \((\{(X_n\), \(d_n)\}_{n \in \mathbb{N}}\), \(\{g_n \colon X_n \longrightarrow X_{n + 1}\}_{n \in \mathbb{N}})\) be a direct system in 
\(\mathsf{cmn}\), where \(g_n\) are~surjective for all \(n \in \mathbb{N}\).
Then this system has inductive limit.\\
Furthermore, 
\begin{enumerate}
\item its object does not depend, up~to~isometry, on the choice of surjective arrows  for the system, and \label{main_N f1}
\item its arrows are surjective.
\end{enumerate}
\end{thm}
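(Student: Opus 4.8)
The plan is to build the limit space explicitly as a closed subspace of a fixed compact "ambient" space obtained from Gromov's Theorem \ref{Gromov}. First I would observe that the family $\mathfrak{K} = \{(X_n,d_n) : n \in \mathbb{N}\}$ is uniformly compact: it is uniformly bounded because the existence of a non-expansive surjection $g_n \colon X_n \to X_{n+1}$ forces $\operatorname{diam}_{d_{n+1}} X_{n+1} \leqslant \operatorname{diam}_{d_n} X_1$ for all $n$, so all diameters are bounded by $\operatorname{diam}_{d_1} X_1$; and for a fixed $\varepsilon$, if $F_n \subseteq X_n$ is a finite $\varepsilon$-dense set then its image $g_m^n(F_m)$ under the (non-expansive, hence distance-non-increasing) composed map is $\varepsilon$-dense in $X_n$ with the same cardinality, so a single $N$ works for the whole tail, and one then takes the max over the finitely many initial terms. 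By Theorem \ref{Gromov}, there is a compact metric space $(Y,\varrho)$ containing an isometric copy $\widehat{X}_n$ of each $X_n$.

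The key step is to promote the maps $g_n$ to maps between the copies inside $Y$ and pass to a limit. Concretely, identify each $X_n$ with $\widehat{X}_n \subseteq Y$ and consider the graphs, or better, work with the sequence of compact sets $\widehat{X}_n \in \mathcal{K}(Y)$ (the hyperspace of $Y$ with the Hausdorff metric, which is compact since $Y$ is). I expect that after passing to a subsequence one gets Hausdorff-convergence $\widehat{X}_{n_k} \to X_\infty$ for some compact $X_\infty \subseteq Y$; the non-expansiveness and surjectivity of the $g_n$ should force the Gromov--Hausdorff distances $d_{GH}(X_n, X_\infty) \to 0$ along the whole sequence, and in fact one wants to produce canonical non-expansive surjections $\alpha_n \colon X_n \to X_\infty$. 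The clean way to get the maps: for $m \leqslant n$ the composite $g_m^n$ is non-expansive and surjective, so one can try to take a "diagonal/ultrafilter limit" of the $g_m^n$ as $n \to \infty$ — for each $x \in X_m$ the points $g_m^n(x) \in Y$ have a convergent subnet; using an ultrafilter on $\mathbb{N}$ makes the limit $\alpha_m(x) := \lim_{\mathcal{U}} \widehat{g_m^n}(x)$ well-defined, non-expansive in $x$, and compatible ($\alpha_n \circ g_m^n = \alpha_m$) by construction. Surjectivity of each $\alpha_m$ onto $X_\infty := \overline{\alpha_1(X_1)}$ follows because each $g_m^n$ is onto and distances are controlled; and then $X_\infty = \alpha_n(X_n)$ for every $n$, giving point (2) of the "furthermore".

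Next I would verify the universal property of Definition \ref{Inductive limit}. Given a cocone $\{\xi_n \colon X_n \to A'\}$ of non-expansive maps with $\xi_{n+1}\circ g_n = \xi_n$, the map $\nu \colon X_\infty \to A'$ must satisfy $\nu(\alpha_n(x)) = \xi_n(x)$; since $\bigcup_n \alpha_n(X_n)$ — equivalently $\alpha_1(X_1)$, as the system is nested — is dense in $X_\infty$, this pins down $\nu$ uniquely, and one checks it is well-defined and non-expansive using that $\varrho_\infty(\alpha_m(x),\alpha_m(x')) = \lim_\mathcal{U} d_{n}(g_m^n x, g_m^n x')$ dominates $d_{A'}(\xi_m x, \xi_m x')$ in the limit; uniqueness of $\nu$ is automatic from density and continuity. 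For the independence statement (1), suppose $\{g_n'\}$ is another family of surjective non-expansive arrows on the same objects $\{X_n\}$, producing limit $X_\infty'$. One compares $X_\infty$ and $X_\infty'$ by noting that both are, up to isometry, a Hausdorff/Gromov--Hausdorff limit of the \emph{same} sequence $(X_n,d_n)$; the point is that $d_{GH}(X_n, X_\infty) \to 0$ holds regardless of which admissible surjections were chosen (the estimate only uses that some non-expansive surjection $X_m \to X_n$ exists, not which one), so $X_\infty$ and $X_\infty'$ are both limits of the Cauchy sequence $(X_n)$ in $\mathfrak{GH}$, hence isometric.

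The main obstacle I anticipate is the \textbf{surjectivity of the limit maps $\alpha_n$ and the identification $X_\infty = \alpha_n(X_n)$}, together with making the ultrafilter-limit construction of $\alpha_n$ genuinely canonical rather than choice-dependent. The subtlety is that an ultralimit of surjections need not be surjective without a uniform quantitative statement; here one needs: for every $\varepsilon$ there is $n_0$ with $d_{GH}(X_{n_0}, X_\infty) < \varepsilon$ \emph{and} a matching that is compatible with the already-constructed $\alpha$'s, so that $\alpha_n(X_n)$ is $\varepsilon$-dense in $X_\infty$ for all large $n$ — and then, since each $\alpha_n(X_n)$ is compact hence closed and the system is decreasing under the $g$'s, a diagonal argument closes the gap to get exact surjectivity. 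A secondary technical nuisance is checking that the $\alpha_n$ do not depend on the chosen ultrafilter $\mathcal{U}$; this should follow a posteriori from the universal property (the limit object and its canonical maps are unique up to the unique isomorphism $\nu$), so I would prove existence with one fixed $\mathcal{U}$ and then invoke uniqueness.
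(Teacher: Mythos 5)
Your proposal is correct and follows the same skeleton as the paper's proof: uniform compactness of $\{X_n\}_{n\in\mathbb{N}}$, Gromov's Theorem \ref{Gromov} to embed everything in a compact $(Y,\varrho)$, Hausdorff convergence of a subsequence in the hyperspace to the limit object, construction of compatible non--expansive surjections onto it, and the observation that every subsequence has a further subsequence converging in $\mathfrak{GH}$ to the (unique) limit object, which yields both convergence of the whole sequence and independence of the arrows. The only real divergence is technical: where the paper's proof of Theorem \ref{main_N} builds $f_1$ ``by hand'' via a diagonal argument on countable dense subsets, you take an ultrafilter limit of the maps $g_m^n$ --- which is essentially the Arzel\'{a}--Ascoli device the paper itself deploys for the general case in Theorem \ref{main}, and your treatment of the one genuine subtlety (surjectivity of the ultralimit maps, closed by pulling back approximants and using non--expansiveness plus compactness of $X_m$) is sound.
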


\begin{note}
In our construction we will be passing to a subsequence countably many times.
To avoid making the~notation more cumbersome, our subsequences always keep the same notation as for the original sequence, until the moment, when we need our final subsequence to define a sequence of arrows for the direct limit.
We~are making an agreement that the first element of any subsequence has index \(1\).
\end{note}

\begin{proof}[Proof of Theorem \ref{main_N}]
For simplicity, let \(f_m^n\) stand for \(g_{n - 1} \circ \ldots \circ g_m\) for \(m < n\) and \(f_n^n = \OPN{id}_{X_n}\) for \(n \in \mathbb{N}\). \linebreak
\( \)
\indent First, notice that all the diameters \(\OPN{diam}_{d_n} X_n\) for \(n \in \mathbb{N}\) have a common upper bound.
What is more, for~any \(\varepsilon > 0\) we can find a positive integer \(N\) such that for any \(n \in \mathbb{N}\) the space \(X_n\) has at most \(N\)--element \linebreak \(\varepsilon\)--dense subset.
Indeed, by the fact that arrows of our system are non--expansive and surjective, it is sufficient to find for fixed \(\varepsilon\) a number \(N\) and \(N\)--element \(\varepsilon\)--dense subset for \(X_1\), which is straightforward by~the~compactness of \(X_1\).
We thus infer that \(\{X_n\}_{n \in \mathbb{N}}\) is uniformly compact, so we use Gromov's theorem (Theorem~\ref{Gromov}) to~find a~compact metric space \((Y\), \(\varrho)\) in~which we can isometrically embed all the \(X_n\)'s for~\(n \in \mathbb{N}\).
Therefore, without loss of~generality, we can consider \(X_n\) for \(n \in \mathbb{N}\) as a closed subset of \(Y\). 
By the compactness of~the~hyperspace of a compact metric space, we know that \(\{X_n\}_{n \in \mathbb{N}}\) has a convergent subsequence in the hyperspace of \(Y\).
However, right now we are only interested in finding such a convergent subsequence, so, without loss of generality, for~a~while, we shall assume that the sequence \(\{X_n\}_{n \in \mathbb{N}}\) converges, with respect~to~the~Hausdorff distance, to some compact subset \(X\) of \(Y\).
Without loss of generality we~may~and do assume that \(\OPN{dist}_{\varrho}(X_n\), \(X) < \frac{1}{2n}\) for~\(n > 1\).
We will prove that \(X\) is the object of~the~direct limit we are~looking~for.

Let a sequence \(\{y_k\}_{k \in \mathbb{N}}\) be dense in \(X\).
We know that there is a sequence \(\{x_n''\}_{n \in \mathbb{N}} \in \prod_{n \in \mathbb{N}} X_n\), for which \(\varrho(x_n''\), \(y_1) < \frac{1}{2n}\) for \(n > 1\).
We find \(\{x_n'\}_{n \in \mathbb{N}} \subset X_1\) such that \(f_1^n(x_n') = x_n''\).
Without loss of generality we can assume that \(\varrho(x_n'\), \(x_1) < \frac{1}{2n}\) for some \(x_1 \in X_1\) and \(n > 1\).
By the triangle inequality, and the fact that \(f_1^n\) is non--expansive, we get \(\varrho(f_1^n(x_1)\), \(y_1) < \frac{1}{n}\) for \(n > 1\).
Proceeding in the same way and using the~diagonal method we construct \(\{x_k\}_{k \in \mathbb{N}} \subset X_1\) such that \(\varrho(f_1^n(x_k)\), \(y_k) < \frac{1}{n}\) for \(k \in \mathbb{N}\) and \(n > 1\).
That~is, \(f_1^n(x_k) \longrightarrow y_k\) in \(Y\) when \(n \longrightarrow \infty\) and \(k \in \mathbb{N}\).

Let \(D\) be a countable dense subset of \(X_1\).
If \(D \backslash \{x_k\}_{k \in \mathbb{N}}\) is non--empty, take a one--to--one sequence \(\{z_k\}_{k < K} = D \backslash \{x_k\}_{k \in \mathbb{N}}\), where \(K \leqslant \infty\).
There exists \(\{y_n'\}_{n \in \mathbb{N}} \subset X\) such that \(\varrho(f_1^n(z_1)\), \(y_n') < \frac{1}{2n}\) for \(n > 1\).
Without~loss~of~generality, \(\varrho(y_n'\), \(w_1) < \frac{1}{2n}\) for \(n > 1\) and some \(w_1 \in X\).
Then \(\varrho(f_1^n(z_1)\), \(w_1) < \frac{1}{n}\) for \(n > 1\).
Once again, the diagonal argument provides us a sequence \(\{w_k\}_{k < K} \subset X\) for which \(\varrho(f_1^n(z_k)\), \(w_k) < \frac{1}{n}\) for~\(k < K\) and \(n > 1\).
That~is, \(f_1^n(z_k) \longrightarrow w_k\) in \(Y\) when \(n \longrightarrow \infty\) and \(k < K\).
In case of \(D \backslash \{x_k\}_{k \in \mathbb{N}} = \emptyset\), we do nothing as \(\{x_k\}_{k \in \mathbb{N}}\) is already dense in \(X_1\), so we do not need to enlarge it.

Let \(D' = \{x_k\}_{k \in \mathbb{N}} \sqcup \{z_k\}_{k < K}\).
Define \(f_1 \colon D' \longrightarrow \{y_k\}_{k \in \mathbb{N}} \cup \{w_k\}_{k < K}\) by \(f_1(x_k) = y_k\) for \(k \in~\mathbb{N}\) and \(f_1(z_k) = w_k\) for \(k < K\).
(We assume here that \(D \backslash \{x_k\}_{k \in \mathbb{N}} \neq \emptyset\). Otherwise, here and while defining  arrows for~our inductive limit, we simply omit parts of the definition in which the sequences \(\{z_k\}_{k < K}\) and \(\{w_k\}_{k < K}\) are involved, and the rest of the proof remains the same.)
By the inequalities proven before, \linebreak \(f_1\) is well~defined and non--expansive.
We will now define mappings \(f_j \colon f_{1}^{j}(D') \longrightarrow \{y_k\}_{k \in \mathbb{N}} \cup \{w_k\}_{k < K}\) for~\(j > 1\).
For~such~\(j\), we define \(f_j\) by the rule:
\(f_j(x) \in f_1((f_1^j)^{-1}(\{x\}) \cap D')\) for every \(x \in f_{1}^{j}(D')\).
We will prove that~these~mappings are well defined and non--expansive.
Fix \(\puo{x_1}{j}\), \(\puo{x_2}{j} \in f_{1}^{j}(D')\).
Choose \(\puo{a_k}{j} \in (f_1^j)^{-1}(\{\puo{x_k}{j}\})\) and let \(\puo{y_k}{j} = f_1(\puo{a_k}{j})\) for \(k = 1\), \(2\).
Let \(\{\eta_n\}_{n \in \mathbb{N}}\) be the set of indices of the final subsequence from~our construction.
Once more, by the inequalities that we have proved previously, one can notice that
\[
\displaystyle \puo{y_k}{j}
\displaystyle = \lim_{n \rightarrow \infty} f_{1}^{\eta_n}\left(\puo{a_k}{j}\right)
\displaystyle = \lim_{n \rightarrow \infty} f_{j}^{\eta_n}\left(f_{1}^{j}\left(\puo{a_k}{j}\right)\right)
\displaystyle = \lim_{n \rightarrow \infty} f_{j}^{\eta_n}\left(\puo{x_k}{j}\right),
\]
for \(k = 1\), \(2\).
Consequently, the right hand side of the inequality
\(
\varrho(\puo{x_1}{j}\), \(\puo{x_2}{j}) \geqslant \varrho(f_{j}^{\eta_n}(\puo{x_1}{j})\), \(f_{j}^{\eta_n}(\puo{x_2}{j}))
\)
converges to~\(\varrho(\puo{y_1}{j}\),~\(\puo{y_2}{j})\).
Thus \(\puo{y_1}{j} = \puo{y_2}{j}\) if only \(\puo{x_1}{j} = \puo{x_2}{j}\), so \(f_j\) is well defined and non--expansive.
However, \(f_n\) for \(n \in \mathbb{N}\) is a non--expansive mapping from a dense subset of \(X_n\) on a dense subset of \(X\), so~it~can be extended in a unique way to a non--expansive surjection between \(X_n\) and \(X\), denoted again by~\(f_n\).
Therefore, what we are really dealing with is the~family of surjective arrows \(\{f_n \colon X_n \longrightarrow X\}_{n \in \mathbb{N}}\) in~\(\mathsf{cmn}\).
Moreover, this family clearly satisfies point (\ref{Inductive limit 1}) from~Definition~\ref{Inductive limit}.

To prove the second point from the definition of inductive limit, fix a compact metric space \((Z\), \(\zeta)\) and a~family of arrows \(\{h_n \colon X_n \longrightarrow Z\}_{n \in \mathbb{N}}\) such that \(h_m \circ f_m^n = h_m\) for all natural numbers \(m < n\).
We need to~find an arrow \(\nu \colon X \longrightarrow Z\) such that \(\nu \circ f_n = h_n\) for every \(n \in \mathbb{N}\).
Suppose that we already have an arrow \(\nu\) with slightly weaker property, i.e., \(\nu \circ f_1 = h_1\).
By the surjectivity of arrows from our system, if such \(\nu\) exists, it need to be unique.
We have
\[
\displaystyle h_n \circ f_{1}^{n}
\displaystyle = h_{1}
\displaystyle = \nu \circ f_{1}
\displaystyle = \left(\nu \circ f_{n}\right) \circ f_{1}^{n},
\]
so, by the surjectivity of \(f_1^n\), the equality \(\nu \circ f_n = h_n\) holds for all \(n \in \mathbb{N}\).
Therefore, it is sufficient for this to~be true, to find \(\nu\) such that \(\nu \circ f_1 = h_1\).
Define then \(\nu\) to satisfy this formula.
We will prove that \(\nu\) is~a~well defined arrow in \(\mathsf{cmn}\).
Fix \(x\), \(y \in X_1\).
Then
\[
\displaystyle \zeta\left(h_{1}(x), \ h_{1}(y)\right)
\displaystyle = \zeta\left(\left(h_{\eta_{n}} \circ f_{1}^{\eta_{n}}\right)(x), \ \left(h_{\eta_{n}} \circ f_{1}^{\eta_{n}}\right)(y)\right)
\displaystyle \leqslant \varrho\left(f_{1}^{\eta_{n}}(x), \ f_{1}^{\eta_{n}}(y)\right).
\]
Basing on the inequalities proven at the beggining, we can pass with \(n\) to the infinity and get the inequality \linebreak \(\zeta(h_{1}(x)\), \(h_{1}(y)) \leqslant \varrho(f_1(x)\), \(f_1(y))\), from which we infer that \(\nu\) is well defined and non--expansive.
This~completes the prove on existence of inductive limit.

To prove that the object of our limit is unique up to isometry, it is enough to notice that, in fact, we have already proven that every subsequence of \(\{X_n\}_{n \in \mathbb{N}}\) contains a subsequence converging in \(\mathfrak{GH}\) to~this~object.
Therefore, the whole sequence \(\{X_n\}_{n \in \mathbb{N}}\) converges to the object of the direct limit which, then, is~independent of the choice of surjective arrows of our direct system.
This proves (\ref{main_N f1}) of the ``furthermore'' part of~this~theorem.
Let the pair \((Z\), \(\zeta)\), \(\{h_n \colon X_n \longrightarrow Z\}_{n \in \mathbb{N}}\) be another inductive limit of our system.
Then its~arrows have~to be surjective, as all of the elements of \(\{f_n\}_{n \in \mathbb{N}}\) are surjective, and the only arrow \(\nu \colon X \longrightarrow Z\) satisfying for every natural number \(n\) equation \(\nu \circ f_n = h_n\) is an isomorphism.
\end{proof}

We will now prove this theorem in the general case.
To accomplish this, we will use the Arzel\'{a}--Ascoli theorem to find a single arrow for our limit, instead of constructing it.

\begin{thm} \label{main}
Let \((\{(X_\sigma\), \(d_\sigma)\}_{\sigma \in \Sigma}\), \(\{\alpha_{\sigma}^{\tau} \colon X_\sigma \longrightarrow X_\tau\}_{\sigma \leqslant \tau})\) be a direct system in \(\mathsf{cmn}\), where \(\alpha_\sigma^\tau\) are surjective for all \(\sigma \leqslant \tau\).
Then this system has inductive limit.\\
Furthermore,
\begin{enumerate}
\item its object does not depend, up~to~isometry, on the choice of surjective arrows  for the system, and \label{main f1}
\item its arrows are surjective. \label{main f2}
\end{enumerate}
\end{thm}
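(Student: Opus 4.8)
The plan is to derive the general case from Gromov's theorem, much as in Theorem~\ref{main_N}, but — since for a general directed set one cannot hope to construct the limiting arrows diagonally — to produce them all at once via the Arzel\'{a}--Ascoli theorem. A preliminary reduction removes the only place where ``directed'' is weaker than ``sequential'': fix \(\sigma_0 \in \Sigma\) and pass to the subsystem indexed by \(\Sigma' = \{\sigma \in \Sigma \colon \sigma \geqslant \sigma_0\}\); this is a cofinal directed subset of \(\Sigma\), so the inductive limit is unchanged, and now for \(\sigma \in \Sigma'\) the non--expansive surjection \(\alpha_{\sigma_0}^{\sigma}\) gives \(\OPN{diam}_{d_\sigma} X_\sigma \leqslant \OPN{diam}_{d_{\sigma_0}} X_{\sigma_0}\) and carries a finite \(\varepsilon\)--dense subset of \(X_{\sigma_0}\) onto an \(\varepsilon\)--dense subset of \(X_\sigma\) of no larger size. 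Thus \(\{X_\sigma\}_{\sigma \in \Sigma'}\) is uniformly compact; renaming \(\Sigma'\) to \(\Sigma\), Gromov's theorem (Theorem~\ref{Gromov}) gives a compact metric space \((Y, \varrho)\) with which we identify every \(X_\sigma\), now a closed subset of \(Y\).

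Next I would extract the limit object and all its arrows simultaneously. The hyperspace \(2^Y\) of non--empty closed subsets of \(Y\) with the Hausdorff metric \(\varrho_H\) is compact, and for each \(\sigma\) the equi--Lipschitz family \(\{\alpha_\sigma^{\tau} \colon \tau \geqslant \sigma\} \subseteq C(X_\sigma, Y)\) has, by Arzel\'{a}--Ascoli, compact closure \(\mathcal{F}_\sigma\) in the supremum metric. In the compact product \(K = 2^Y \times \prod_{\sigma \in \Sigma} \mathcal{F}_\sigma\) put, for \(\tau \in \Sigma\), the point \(k_\tau\) with first coordinate \(X_\tau\) and \(\sigma\)--th coordinate \(\alpha_\sigma^{\tau}\) when \(\sigma \leqslant \tau\) (and \(\OPN{id}_{X_\sigma}\) otherwise), and let \((X, \{\alpha_\sigma\}_{\sigma})\) be a cluster point of the net \((k_\tau)_\tau\), realised along a subnet \((\tau_i)_i\); since the subnet is cofinal over \(\Sigma\), for each fixed \(\sigma\) we have \(\tau_i \geqslant \sigma\) eventually, \(X_{\tau_i} \to X\) in \(2^Y\), and \(\alpha_\sigma^{\tau_i} \to \alpha_\sigma\) uniformly on \(X_\sigma\). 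Then each \(\alpha_\sigma\) is non--expansive; \(\alpha_\sigma(X_\sigma) \subseteq X\), as \(\OPN{dist}_\varrho(\alpha_\sigma^{\tau_i}(x), X) \leqslant \varrho_H(X_{\tau_i}, X) \to 0\) and \(X\) is closed; \(\alpha_\sigma\) maps \(X_\sigma\) onto \(X\) (approximate \(z \in X\) by \(x_i \in X_{\tau_i}\), lift through \(\alpha_\sigma^{\tau_i}\), pass to a convergent subnet of the lifts); and \(\alpha_\tau \circ \alpha_\sigma^{\tau} = \alpha_\sigma\) for \(\sigma \leqslant \tau\), since \(\alpha_\sigma^{\tau_i} = \alpha_\tau^{\tau_i} \circ \alpha_\sigma^{\tau}\) once \(\tau_i \geqslant \tau\). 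Hence \((X, \{\alpha_\sigma\}_\sigma)\) is a cocone in \(\mathsf{cmn}\) with surjective arrows, fulfilling point~(\ref{Inductive limit 1}) of Definition~\ref{Inductive limit}.

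For the universal property, given a cocone \((Z, \{\xi_\sigma\}_\sigma)\) in \(\mathsf{cmn}\), the decisive inequality is \(\zeta(\xi_\sigma(x), \xi_\sigma(x')) \leqslant \varrho(\alpha_\sigma(x), \alpha_\sigma(x'))\) for \(x, x' \in X_\sigma\): for \(\tau_i \geqslant \sigma\) one has \(\xi_\sigma = \xi_{\tau_i} \circ \alpha_\sigma^{\tau_i}\) with \(\xi_{\tau_i}\) non--expansive, so the left side is at most \(\varrho(\alpha_\sigma^{\tau_i}(x), \alpha_\sigma^{\tau_i}(x')) \to \varrho(\alpha_\sigma(x), \alpha_\sigma(x'))\). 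Fixing one \(\sigma\), this makes \(\nu \colon X \longrightarrow Z\), \(\nu(\alpha_\sigma(x)) = \xi_\sigma(x)\), a well--defined non--expansive map; using a common upper bound of any two indices together with the same inequality applied at those indices one checks \(\nu \circ \alpha_\rho = \xi_\rho\) for every \(\rho\), and \(\nu\) is unique because \(\alpha_\sigma\) is onto. So \((X, \{\alpha_\sigma\}_\sigma)\) is the inductive limit; this and the above prove existence and point~(\ref{main f2}). For~(\ref{main f1}): isomorphisms in \(\mathsf{cmn}\) are exactly isometries, so the limit object is unique up to isometry; but the construction shows that \emph{every} cluster point of the net \((X_\sigma)_{\sigma \in \Sigma}\) in \(2^Y\) is such a limit object, hence all such cluster points are mutually isometric, so the net \(([X_\sigma])_{\sigma \in \Sigma}\) has a single cluster point in the compact subspace \(\overline{\{[X_\sigma]\}}\) of \(\mathfrak{GH}\) and therefore converges to it — and this net, hence its limit, depends on the objects only. (Taking \(\Sigma = \mathbb{N}\), this also reproves Theorem~\ref{main_N}.)

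The step I expect to be the real obstacle is the one in the second paragraph: manufacturing a \emph{single coherent} family \(\{\alpha_\sigma\}_\sigma\) of limiting arrows, all with values in one and the same compact set \(X\) and at the same time compatible with every transition map \(\alpha_\sigma^{\tau}\), rather than handling one arrow at a time as for sequences; passing to a cluster point in the product of the hyperspace with all the Arzel\'{a}--Ascoli compacta \(\mathcal{F}_\sigma\) simultaneously is the device that makes this work, and recovering the surjectivity of each individual \(\alpha_\sigma\) from Hausdorff convergence in \(2^Y\) is the spot that calls for the most care.
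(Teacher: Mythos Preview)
Your argument is correct, but it follows a genuinely different route from the paper's. The paper fixes \(\sigma_0\), embeds only \(\{X_\sigma\}_{\sigma \geqslant \sigma_0}\) in \(Y\), and then applies Arzel\'{a}--Ascoli to the \emph{single} family \(\{\alpha_{\sigma_0}^{\sigma}\}_{\sigma \geqslant \sigma_0}\) to obtain one limiting arrow \(\alpha_{\sigma_0} \colon X_{\sigma_0} \to X\). All the remaining arrows are then \emph{defined} by the fiber rule \(\alpha_\sigma(x) \in \alpha_{\sigma_0}\bigl((\alpha_{\sigma_0}^{\sigma})^{-1}(\{x\})\bigr)\) for \(\sigma \geqslant \sigma_0\) (and by composition through an upper bound otherwise), and a separate computation shows that this is well defined and non--expansive. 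In contrast, you manufacture the whole family \(\{\alpha_\sigma\}_\sigma\) in one stroke by taking a cluster point of the net \((k_\tau)_\tau\) in the Tikhonov product \(2^Y \times \prod_\sigma \mathcal{F}_\sigma\); compatibility \(\alpha_\tau \circ \alpha_\sigma^\tau = \alpha_\sigma\) then falls out immediately from \(\alpha_\sigma^{\tau_i} = \alpha_\tau^{\tau_i} \circ \alpha_\sigma^\tau\) by uniform convergence, with no fiber--lifting verification needed. This is exactly the device the paper employs later for the \emph{inverse} limit (Lemma~\ref{inverse limit - main}), only there the varying domains force one to track graphs in \(\mathfrak{K}(Y \times Y)\) rather than functions in \(C(X_\sigma, Y)\). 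Your approach is more uniform and avoids the case split \(\sigma \geqslant \sigma_0\) versus \(\sigma \not\geqslant \sigma_0\); the paper's is a little lighter on the compactness machinery and stays closer in spirit to the constructive sequential proof of Theorem~\ref{main_N}. Both reach the same place.
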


\begin{proof}
Similarly as in case of Theorem \ref{main_N}, our first and most important step of the proof will be to produce a single surjective arrow for~our~inductive limit.
Begin then by fixing \(\sigma_0 \in \Sigma\) and, similarly as in~the~proof for~\(\Sigma = \mathbb{N}\), let us embed \(\{X_\sigma\}_{\sigma \geqslant \sigma_0}\) isometrically in some compact metric space \((Y\), \(\varrho)\), and denote, after~passing to a subnet, \(X\) to be the Hausdorff limit of \(\{X_\sigma\}_{\sigma \geqslant \sigma_0}\) in \(Y\).
Of course, our aim now will be to~prove that \(X\) is the object of the direct limit of our system.

By the above, we can consider the family \(\{\alpha_{\sigma_0}^\sigma \colon X_{\sigma_0} \longrightarrow Y\}_{\sigma \geqslant \sigma_0}\) of arrows.
This family clearly fulfills the assumptions of the Arzel\'{a}--Ascoli theorem.
Therefore, we know that it has a subnet uniformly convergent to~some non--expansive mapping \(\alpha_{\sigma_0} \colon X_{\sigma_0} \longrightarrow Y\).
Once again,  for convenience, we can assume that~\(\{\alpha_{\sigma_0}^{\sigma}\}_{\sigma \geqslant \sigma_0}\) converges uniformly to \(\alpha_{\sigma_0}\).
Combining this fact with the convergence of \(\{X_{\sigma}\}_{\sigma \geqslant \sigma_0}\) to \(X\), it~is easy to check that \(\OPN{im}(\alpha_{\sigma_0}) = X\).
We have a single surjective arrow for our direct limit.
It remains, basing on \(\alpha_{\sigma_0}\), to define the rest of them.

Let us define \(\alpha_\sigma \colon X_\sigma \longrightarrow X\) for \(\sigma \in \Sigma\).
If \(\sigma \geqslant \sigma_0\), this mapping will be given by the rule: \linebreak \(\alpha_{\sigma}(x) \in \alpha_{\sigma_0}((\alpha_{\sigma_0}^\sigma)^{-1}(\{x\}))\) for~any \(x \in X_\sigma\), whereas if \(\sigma \not\geqslant \sigma_0\), we define \(\alpha_{\sigma}\) as \(\alpha_{\sigma'} \circ \alpha_{\sigma}^{\sigma'}\), where \(\sigma' \in \Sigma\) is~chosen arbitrarily so that \(\sigma' \geqslant \sigma_0\) and \(\sigma' \geqslant \sigma\).
The task is now to~prove that these mappings are well~defined arrows.
Let us first assure ourselves that this is the case for arbitrary \(\sigma \geqslant \sigma_0\).
For this purpose, we should remind ourselves of a final subnet of \(\Sigma\) from our previous considerations.
Let us denote the set of indices of~this~subnet by \(\{\eta_\lambda\}_{\lambda \in \Lambda}\).
Fix \(x_1\), \(x_2 \in X_\sigma\).
Choose \(a_j \in (\alpha_{\sigma_0}^\sigma)^{-1}(\{x_j\})\) and let \(y_j = \alpha_{\sigma_0}(a_j)\)  for~\(j = 1\),~\(2\).
Observe that \(\alpha_{\sigma_0}^{\eta_\lambda} = \alpha_{\sigma}^{\eta_\lambda} \circ \alpha_{\sigma_0}^{\sigma}\) for sufficiently large \(\lambda \in \Lambda\).
Because of this and the fact that~\(\{\alpha_{\sigma_0}^{\eta_\lambda}\}_{\lambda \in \Lambda}\) is~pointwise convergent to \(\alpha_{\sigma_0}\), for~\(j = 1\),~\(2\) we~have~that
\[
\displaystyle y_j
\displaystyle = \lim_{\lambda \, \in \, \Lambda} \alpha_{\sigma_0}^{\eta_\lambda}\left(a_j\right)
\displaystyle = \lim_{\lambda \, \in \, \Lambda} \alpha_{\sigma}^{\eta_\lambda}\left(\alpha_{\sigma_0}^{\sigma}\left(a_j\right)\right)
\displaystyle = \lim_{\lambda \, \in \, \Lambda} \alpha_{\sigma}^{\eta_\lambda}\left(x_j\right).
\]
It follows that the right hand side of the inequality
\(
\varrho(x_1\), \(x_2) \geqslant \varrho(\alpha_{\sigma}^{\eta_\lambda}(x_1)\), \(\alpha_{\sigma}^{\eta_\lambda}(x_2))
\)
converges to~\(\varrho(y_1\),~\(y_2)\).
Thus \(y_1 = y_2\) if only \(x_1 = x_2\), so \(\alpha_\sigma\) is well defined and non--expansive.
Of course, it is also surjective.
We~now turn to the case when \(\sigma \not\geqslant \sigma_0\).
Let us first check that \(\alpha_{\sigma''} \circ \alpha_{\sigma'}^{\sigma''} = \alpha_{\sigma'}$ for $\sigma'' \geqslant \sigma' \geqslant \sigma_0\).
To~see~this, it~is enough to notice that \[
\displaystyle \left(\alpha_{\sigma_0}^{\sigma'}\right)^{-1}(\{x\}) 
\displaystyle \subset \left(\alpha_{\sigma_0}^{\sigma'}\right)^{-1}\left(\left(\alpha_{\sigma'}^{\sigma''}\right)^{-1}\left(\left\{\alpha_{\sigma'}^{\sigma''}(x)\right\}\right)\right)
\displaystyle = \left(\alpha_{\sigma_0}^{\sigma''}\right)^{-1}\left(\left\{\alpha_{\sigma'}^{\sigma''}(x)\right\}\right)
\]
for any \(x \in X_{\sigma'}\).
Consequently, since \(\Sigma\) is upward directed, \(\alpha_\sigma\) is well defined.
Of course, it is surjective and non--expansive.

We already have a family of well defined surjective arrows \(\{\alpha_{\sigma} \colon X_\sigma \longrightarrow X\}_{\sigma \in \Sigma}\).
What is left to show \linebreak is that this family, together with \(X\), is the inductive limit of our direct system.
Let us now check the formula \(\alpha_{\sigma'} \circ \alpha_{\sigma}^{\sigma'} = \alpha_{\sigma}\) for~\(\sigma' \geqslant \sigma\).
The case when \(\sigma' \geqslant \sigma_0\) is obvious.
If \(\sigma' \not\geqslant \sigma_0\), there is \(\sigma'' \in \Sigma\) such that \(\sigma'' \geqslant \sigma_0\)~and \(\sigma'' \geqslant \sigma'\).
Then
\[
\displaystyle \alpha_{\sigma}
\displaystyle = \alpha_{\sigma''} \circ \alpha_{\sigma}^{\sigma''}
\displaystyle = \alpha_{\sigma''} \circ \alpha_{\sigma'}^{\sigma''} \circ \alpha_{\sigma}^{\sigma'}
\displaystyle = \alpha_{\sigma'} \circ \alpha_{\sigma}^{\sigma'}.
\]

Fix a compact metric space \((Z\), \(\zeta)\) and a family of arrows \(\{\xi_\sigma \colon X_\sigma \longrightarrow Z\}_{\sigma \in \Sigma}\) such that \(\xi_{\sigma} = \xi_{\sigma'} \circ \alpha_{\sigma}^{\sigma'}\) for~\(\sigma' \geqslant \sigma\) in \(\Sigma\).
It remains to prove that there exists an arrow \(\nu \colon X \longrightarrow Z\) satisfying \(\nu \circ \alpha_{\sigma} = \xi_{\sigma}\) for \(\sigma \in \Sigma\).
Suppose for~a~while that we have already found such \(\nu\) that satisfies \(\nu \circ \alpha_{\sigma_0} = \xi_{\sigma_0}\) (note that this forces \(\nu\) to be unique).
In such a situation, if \(\sigma \geqslant \sigma_0\), we have 
\[
\displaystyle \xi_\sigma \circ \alpha_{\sigma_0}^{\sigma}
\displaystyle = \xi_{\sigma_0}
\displaystyle = \nu \circ \alpha_{\sigma_0}
\displaystyle = \left(\nu \circ \alpha_{\sigma}\right) \circ \alpha_{\sigma_0}^{\sigma}.
\]
By~the surjectivity of \(\alpha_{\sigma_0}^\sigma\), we have the desired equality.
On the other hand, if \(\sigma \not\geqslant \sigma_0\), there exists \(\sigma' \in \Sigma\) such that \(\sigma' \geqslant \sigma_0\) and \(\sigma' \geqslant \sigma\) in \(\Sigma\).
Then we know that 
\[
\xi_{\sigma}
\displaystyle = \xi_{\sigma'} \circ \alpha_{\sigma}^{\sigma'}
\displaystyle = \nu \circ \alpha_{\sigma'} \circ \alpha_\sigma^{\sigma'}
\displaystyle = \nu \circ \alpha_{\sigma}.
\]
Thus, if the equation \(\nu \circ \alpha_{\sigma_0} = \xi_{\sigma_0}\) correctly defines a non--expansive map \(\nu\), the assertion follows. \linebreak
For \(x\), \(y \in X_{\sigma_0}\) and \(\lambda \in \Lambda\),
\[
\displaystyle \zeta\left(\xi_{\sigma_0}(x), \ \xi_{\sigma_0}(y)\right)
\displaystyle = \zeta\left(\left(\xi_{\eta_{\lambda}} \circ \alpha_{\sigma_0}^{\eta_{\lambda}}\right)(x), \ \left(\xi_{\eta_{\lambda}} \circ \alpha_{\sigma_0}^{\eta_{\lambda}}\right)(y)\right)
\displaystyle \leqslant \varrho\left(\alpha_{\sigma_0}^{\eta_{\lambda}}(x), \ \alpha_{\sigma_0}^{\eta_{\lambda}}(y)\right).
\]
Passing to the limit we obtain \(\zeta(\xi_{\sigma_0}(x)\),~\(\xi_{\sigma_0}(y)) \leqslant \varrho(\alpha_{\sigma_0}(x)\), \(\alpha_{\sigma_0}(y))\), which implies that \(\nu\) is well defined~and non--expansive.
This ends the proof of the first part of the theorem.

We have proven that any subnet of \(\{X_\sigma\}_{\sigma \in \Sigma}\) contains a subnet convergent to the object of the direct limit.
It means that \(\{X_\sigma\}_{\sigma \in \Sigma}\) is convergent, in the Gromov--Hausdorff space, to this object.
This shows point (\ref{main f1}) of the~second part of the theorem.
Let the pair \(((Z\), \(\zeta)\), \(\{\xi_\sigma \colon X_\sigma \longrightarrow Z\}_{\sigma \in \Sigma})\) be another inductive limit of our system.
Then its arrows are surjective, because all of the elements of \(\{\alpha_\sigma\}_{\sigma \in \Sigma}\) are surjective, and the only arrow \(\nu \colon X \longrightarrow Z\) satisfying for every \(\sigma \in \Sigma\) equation \(\nu \circ \alpha_\sigma = \xi_\sigma\) is an isomorphism.
\end{proof}

\begin{cor} \label{Convergence_of_a_net}
If the pair \((\{(X_\sigma\), \(d_\sigma)\}_{\sigma \in \Sigma}\), \(\{\alpha_{\sigma}^{\tau} \colon X_\sigma \longrightarrow X_\tau\}_{\sigma \leqslant \tau})\) fulfills the assumptions of the previous theorem, then the net \(\{X_\sigma\}_{\sigma \in \Sigma}\) is convergent in the Gromov--Hausdorff space.
\end{cor}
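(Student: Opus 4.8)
The plan is to obtain this corollary almost for free from what has already been proved inside the proof of Theorem~\ref{main}. Let $X$ denote the object of the inductive limit of the given system, which exists by Theorem~\ref{main}; up to isometry it is a single point of $\mathfrak{GH}$. I want to show that $\{X_\sigma\}_{\sigma\in\Sigma}$ converges to $X$ in $\mathfrak{GH}$, and for this I would invoke the standard topological fact that a net converges to a point $x$ precisely when every one of its subnets admits a further subnet converging to $x$. Thus it suffices to check that every subnet of $\{X_\sigma\}_{\sigma\in\Sigma}$ has a further subnet converging to $X$ in $\mathfrak{GH}$.

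So fix a subnet, given by a monotone cofinal map $\mu\mapsto\sigma_\mu$ from a directed set $M$ into $\Sigma$. Since such a map is eventually above any prescribed index, after restricting $M$ to a cofinal piece I may assume $\sigma_\mu\geqslant\sigma_0$ for all $\mu$, where $\sigma_0$ is the base index fixed in the proof of Theorem~\ref{main}; a further subnet of this restriction is still a subnet of the original one, so nothing is lost. As observed there (and in the proof of Theorem~\ref{main_N}), surjectivity and non--expansiveness of the connecting arrows together with compactness of $X_{\sigma_0}$ force $\{X_\sigma\}_{\sigma\geqslant\sigma_0}$ to be uniformly compact, so by Gromov's Theorem~\ref{Gromov} this family embeds isometrically into one compact metric space $(Y,\varrho)$. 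Using compactness of the hyperspace of $Y$, our subnet has a further subnet converging in the Hausdorff metric of $Y$ to some compact $X'\subseteq Y$; since the Gromov--Hausdorff distance never exceeds the Hausdorff distance computed inside a common ambient space, this further subnet converges to $X'$ in $\mathfrak{GH}$ as well.

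It remains to identify $X'$ with $X$ up to isometry, and this is the step I expect to need the most care. The idea is that the construction carried out in the proof of Theorem~\ref{main} — pass to a Hausdorff-convergent subnet inside the ambient compact space, apply the Arzel\'{a}--Ascoli theorem to $\{\alpha_{\sigma_0}^{\sigma}\}$ to extract a limiting non--expansive surjection onto that Hausdorff limit, then build the remaining arrows $\alpha_\sigma$ and verify the universal property — uses only the uniform compactness of the $X_\sigma$, the arrows $\alpha_{\sigma_0}^{\sigma}$ (which are available for all $\sigma\geqslant\sigma_0$, not just along the subnet), and the general uniqueness of inductive limits; none of this is disturbed by having first passed to a subnet. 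Running that construction on a Hausdorff-convergent refinement of our subnet therefore exhibits $X'$, together with a suitable family of surjective non--expansive arrows, as an inductive limit of the \emph{original} direct system. By uniqueness of inductive limits up to isomorphism, and since the isomorphisms in $\mathsf{cmn}$ are exactly the surjective isometries, $X'$ is isometric to $X$. Hence the chosen further subnet converges to $X$ in $\mathfrak{GH}$, and by the reduction above the whole net $\{X_\sigma\}_{\sigma\in\Sigma}$ converges to $X$, which is precisely the assertion of the corollary.
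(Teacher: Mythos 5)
Your proposal is correct and follows essentially the same route as the paper: the author establishes Corollary \ref{Convergence_of_a_net} by noting, at the end of the proof of Theorem \ref{main}, that every subnet of \(\{X_\sigma\}_{\sigma\in\Sigma}\) contains a further subnet converging in \(\mathfrak{GH}\) to the (isometry-unique) limit object, whence the whole net converges to it. Your only additions — the explicit reduction to tails above \(\sigma_0\) and the explicit appeal to uniqueness of inductive limits to identify the Hausdorff limit of the refined subnet — are exactly the details the paper leaves implicit.
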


It is worth to consider the partial order \(\preccurlyeq\) on \(\mathfrak{GH}\) (as in \cite{Ultrametrics}) defined by
\[
(X, \, d) \preccurlyeq (Y, \, \varrho) \overset{\OPN{def}}{\Longleftrightarrow} \varphi(Y) = X \textnormal{ for some non--expansive mapping } \varphi \colon Y \longrightarrow X, 
\]
as it, combined with the above Corollary \ref{Convergence_of_a_net}, leads to an interesting conclusion being an analogue of~the~classical property of sequences in \(\mathbb{R}\).
Namely, the aforementioned corollary allows us to infer that every decreasing (with respect to \(\preccurlyeq\)) sequence of compact metric spaces is~convergent in \(\mathfrak{GH}\).

\begin{cor} \label{convergence-decreasing}
Let \(\{(X_n\), \(d_n)\}_{n \in \mathbb{N}}\) be a decreasing (with respect to \(\preccurlyeq\)) sequence in \(\mathfrak{GH}\).
Then it has a limit in \(\mathfrak{GH}\).
\end{cor}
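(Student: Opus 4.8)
The plan is to reduce the statement to Theorem \ref{main_N} (equivalently, to Corollary \ref{Convergence_of_a_net}). By the definition of the relation \(\preccurlyeq\), the hypothesis that \(\{(X_n, d_n)\}_{n \in \mathbb{N}}\) is decreasing means precisely that \((X_{n+1}, d_{n+1}) \preccurlyeq (X_n, d_n)\) for every \(n \in \mathbb{N}\), i.e., that for each \(n\) there is a non--expansive surjection \(g_n \colon X_n \longrightarrow X_{n+1}\) (more precisely, a non--expansive map of \(X_n\) onto a space isometric to \(X_{n+1}\), which we identify with \(X_{n+1}\)). First I would fix, once and for all, one such \(g_n\) for every \(n\).

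Next I would observe that the data \((\{(X_n, d_n)\}_{n \in \mathbb{N}}, \, \{g_n \colon X_n \longrightarrow X_{n+1}\}_{n \in \mathbb{N}})\) is exactly a direct system in \(\mathsf{cmn}\) indexed by \(\mathbb{N}\) whose arrows are surjective: in the simplified \(\mathbb{N}\)--notation there are no further compatibility conditions to verify, as the morphisms \(\alpha_m^n\) are defined to be the composites \(g_{n-1} \circ \cdots \circ g_m\). Hence Theorem \ref{main_N} applies and produces an inductive limit \((X, d)\) of this system. Moreover, Corollary \ref{Convergence_of_a_net} — or, directly, the last part of the proof of Theorem \ref{main_N} — shows that the net \(\{X_n\}_{n \in \mathbb{N}}\), here an honest sequence, converges in the Gromov--Hausdorff space to \((X, d)\). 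That is exactly the assertion of the corollary.

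I do not expect any real obstacle here: the only point requiring a moment's care is the direction of the morphisms, namely checking that "decreasing with respect to \(\preccurlyeq\)" yields surjections \(X_n \longrightarrow X_{n+1}\) (and not the opposite direction), so that one obtains a \emph{direct} system — rather than an inverse one — to which Theorem \ref{main_N} can be applied. One may also add, invoking part (\ref{main_N f1}) of Theorem \ref{main_N}, that the resulting limit in \(\mathfrak{GH}\) does not depend, up to isometry, on the particular choice of the surjections \(g_n\); this is not needed for the statement itself, but it makes the analogy with monotone sequences in \(\mathbb{R}\) cleaner.
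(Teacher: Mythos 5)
Your proposal is correct and follows exactly the route the paper intends: the paper derives this corollary directly from Corollary \ref{Convergence_of_a_net} (hence from Theorem \ref{main}, of which Theorem \ref{main_N} is the \(\Sigma = \mathbb{N}\) case), after observing that a \(\preccurlyeq\)--decreasing sequence yields a direct system in \(\mathsf{cmn}\) with surjective non--expansive arrows \(X_n \longrightarrow X_{n+1}\). Your care about the direction of the arrows and the remark on independence of the choice of the \(g_n\) (via part (\ref{main_N f1})) match the paper's reasoning.
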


To get a complete analogy to the situation in \(\mathbb{R}\), that is, the theorem on the convergence of monotonic and~bounded sequences, one need to confirm that every increasing and bounded from above (with~respect~to~\(\preccurlyeq\)) sequence of~compact metric spaces is convergent in \(\mathfrak{GH}\).
(Note that every sequence in \(\mathfrak{GH}\) is~bounded from below.)
We will take the path to this minor goal similarly as we did the previous time.
We~will begin by~proving a much more general theorem on the inverse limit of bounded (from above) inverse system in~\(\mathsf{cmn}\), and the aforementioned property will only be the inference of this assertion.

\subsection{Inverse limit}
As before, begin by recalling the definition of an \emph{inverse limit} of an inverse system.

\begin{deff} \label{Inverse limit}
Let \((\{A^\sigma\}_{\sigma \in \Sigma}\), \(\{\alpha_{\tau}^{\sigma} \colon A^\tau \longrightarrow A^\sigma\}_{\sigma \leqslant \tau})\) be an inverse system in~a~category \(\textsf{A}\).
By its \emph{inverse limit} we mean an object \(A\) in \(\textsf{A}\) together with a family of~morphisms \(\{\alpha^\sigma \colon A \longrightarrow~A^\sigma\}\) in \(\textsf{A}\) such~that
\begin{enumerate}
\item \(\alpha_{\tau}^{\sigma} \circ \alpha^\tau  = \alpha^\sigma\) for \(\sigma \leqslant \tau\) in \(\Sigma\), \label{Inverse limit 1}
\end{enumerate}
and the following statement is fulfilled:
\begin{enumerate}
\setItemnumber{2}
\item \label{Inverse limit 2} if \(A'\) is an object in \(\textsf{A}\) and a family \(\{\xi^{\sigma} \colon A' \longrightarrow A^\sigma\}_{\sigma \in \Sigma}\) of morphisms in \(\textsf{A}\) are such~that
\(\alpha_{\tau}^{\sigma} \circ \xi^\tau = \xi^\sigma\) for \(\sigma \leqslant \tau\) in \(\Sigma\),
then there exists a unique morphism \(\nu \colon A' \longrightarrow A\) such that \(\alpha^\sigma \circ \nu = \xi^\sigma\) for any \nolinebreak \(\sigma \in \Sigma\).
\end{enumerate}
\end{deff}

Also this time, if this limit exists (which is not always the case), then its object is unique up~to~isomorphism.

\begin{lem} \label{inverse limit - main}
Let \((\{(X^\sigma\), \(d^\sigma)\}_{\sigma \in \Sigma}\), \(\{\alpha^{\sigma}_{\tau} \colon X^\tau\longrightarrow X^\sigma\}_{\sigma \leqslant \tau})\) be an inverse system in \(\mathsf{cmn}\), where \(\alpha^\sigma_\tau\) are~surjective for all \(\sigma \leqslant \tau\).
Then this system has inverse limit if and only if there exists an object in \(\mathsf{cmn}\) and surjective arrows from this object onto \(X^\sigma\) for \(\sigma \in \Sigma\) in \(\mathsf{cmn}\).\\
Furthermore, if the limit exists,
\begin{enumerate}
\item its object does~not depend, up~to~isometry, on the choice of surjective arrows for the system, and \label{inverse limit - main f1}
\item its arrows are surjective. \label{inverse limit - f2}
\end{enumerate}
\end{lem}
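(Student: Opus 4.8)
\smallskip
\noindent\emph{Plan of proof.} The natural candidate for the limit object is the ordinary set--theoretic inverse limit
\[
L \;\defeq\; \Bigl\{\,(x^\sigma)_{\sigma \in \Sigma} \in \prod_{\sigma \in \Sigma} X^\sigma \ :\ \alpha^\sigma_\tau(x^\tau) = x^\sigma \ \text{ for all } \sigma \leqslant \tau \,\Bigr\},
\]
with the metric \(\tilde d\bigl((x^\sigma)_\sigma,\,(y^\sigma)_\sigma\bigr) \defeq \sup_{\sigma \in \Sigma} d^\sigma(x^\sigma,\,y^\sigma)\) and the coordinate projections \(\pi^\sigma \colon L \longrightarrow X^\sigma\). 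Whenever \(\tilde d\) is finite, \(\bigl((L,\tilde d),\{\pi^\sigma\}\bigr)\) is a cone, the \(\pi^\sigma\) are non--expansive, and, by the classical fact that an inverse system of non--empty compact Hausdorff spaces over an upward directed set with surjective bonding maps has non--empty limit with surjective projections, \(\pi^\sigma(L) = X^\sigma\). Applying the universal property of the limit to a one--point space shows, in turn, that any inverse limit \(\bigl((P,d_P),\{a^\sigma\}\bigr)\) of the system --- if one exists --- is carried bijectively onto \(L\) by \((a^\sigma)_\sigma\) (injectivity from uniqueness of the factorisation of a constant cone, surjectivity from the existence of one for each thread), so the \(a^\sigma\) are surjective; hence \((P,d_P)\) is itself an object of \(\mathsf{cmn}\) carrying surjective non--expansive arrows onto every \(X^\sigma\). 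This settles the ``only if'' implication and point \((\ref{inverse limit - f2})\), and leaves exactly one thing to prove: \emph{if some object \(W\) carries surjective non--expansive arrows onto all the \(X^\sigma\), then \((L,\tilde d)\) is compact and \(\bigl((L,\tilde d),\{\pi^\sigma\}\bigr)\) is an inverse limit of the system.}

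So fix such a \(W\), with surjective non--expansive \(p^\sigma \colon W \longrightarrow X^\sigma\). Then \(\OPN{diam}_{d^\sigma}(X^\sigma) \leqslant \OPN{diam}(W)\) for all \(\sigma\), so \(\tilde d\) is a genuine bounded metric, and \(\{X^\sigma\}_{\sigma\in\Sigma}\) is uniformly compact (the \(p^\sigma\)--image of a finite \(\varepsilon\)--dense subset of \(W\) is a finite \(\varepsilon\)--dense subset of \(X^\sigma\)). By Gromov's Theorem \ref{Gromov} we may embed every \(X^\sigma\) isometrically into one compact metric space \((Y,\varrho)\), and after passing to a subnet we may assume \(X^\sigma\) converges to a compact \(X \subseteq Y\) in the hyperspace of \(Y\). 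The key point now is to produce the \emph{whole} cone at once: unlike in Theorem \ref{main}, a single Arzel\'{a}--Ascoli arrow is useless here, because the bonding maps point ``downhill'' and one cannot recover \(\alpha^\tau\) from \(\alpha^{\sigma_0}\) when \(\tau \geqslant \sigma_0\). Picking a countable dense \(\{q_k\}_k \subseteq X\) and nearest--point maps \(\pi_\tau \colon Y \to X^\tau\) (so \(\varrho(\pi_\tau q_k, q_k) \to 0\)), the net \(\tau \longmapsto \bigl(\alpha^\sigma_\tau(\pi_\tau q_k)\bigr)_{k\in\mathbb{N},\ \sigma\in\Sigma}\) takes values in the compact space \(\prod_k \prod_\sigma X^\sigma\); passing to a further convergent subnet and using a diagonal argument together with the equi--Lipschitz property of the \(\alpha^\sigma_\tau\) yields non--expansive \(\alpha^\sigma \colon X \longrightarrow X^\sigma\) with the property that \(\alpha^\sigma_\tau(u_\tau) \to \alpha^\sigma(x)\) along the subnet whenever \(u_\tau \in X^\tau\) and \(u_\tau \to x\). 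From this one gets \(\alpha^\sigma_\tau \circ \alpha^\tau = \alpha^\sigma\) for \(\sigma \leqslant \tau\), and each \(\alpha^\sigma\) is surjective (for \(z \in X^\sigma\) pick \(w_\tau \in (\alpha^\sigma_\tau)^{-1}(\{z\})\); a cluster point \(w\) of \((w_\tau)\) lies in \(X\) with \(\alpha^\sigma(w) = z\)).

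Now put \(A \defeq (\alpha^\sigma)_\sigma\). Compatibility gives \(A \colon X \to L\); this map is continuous and onto \(L\), since for \(t=(t^\sigma)\in L\) the non--empty closed sets \((\alpha^\sigma)^{-1}(\{t^\sigma\})\) have the finite intersection property (for \(\sigma_1,\dots,\sigma_n\) take \(\rho\) above them all: \((\alpha^\rho)^{-1}(\{t^\rho\}) \subseteq \bigcap_i (\alpha^{\sigma_i})^{-1}(\{t^{\sigma_i}\})\) because \(\alpha^{\sigma_i} = \alpha^{\sigma_i}_\rho \circ \alpha^\rho\) and \(\alpha^{\sigma_i}_\rho(t^\rho) = t^{\sigma_i}\)), so \(\bigcap_\sigma(\alpha^\sigma)^{-1}(\{t^\sigma\}) \neq \emptyset\) by compactness of \(X\). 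Moreover \(A \colon (X,\varrho) \to (L,\tilde d)\) is \(1\)--Lipschitz, as \(\tilde d(Ap,Ap') = \sup_\sigma d^\sigma(\alpha^\sigma p, \alpha^\sigma p') \leqslant \varrho(p,p')\). Hence \((L,\tilde d) = A(X)\) is compact and \(\pi^\sigma(L) = X^\sigma\). That \(\bigl((L,\tilde d),\{\pi^\sigma\}\bigr)\) is an inverse limit is then immediate: for a compact metric \((Z,\zeta)\) and a compatible non--expansive family \(\{\xi^\sigma \colon Z \to X^\sigma\}\), the map \(\nu(z) \defeq (\xi^\sigma(z))_\sigma\) lands in \(L\), satisfies \(\pi^\sigma \circ \nu = \xi^\sigma\), is non--expansive (\(\tilde d(\nu z,\nu z') = \sup_\sigma d^\sigma(\xi^\sigma z, \xi^\sigma z') \leqslant \zeta(z,z')\)), and is the unique such map because the \(\pi^\sigma\) separate points of \(L\). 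Finally, for point \((\ref{inverse limit - main f1})\): for fixed \(t,t'\in L\) the net \(\sigma \mapsto d^\sigma(\pi^\sigma t, \pi^\sigma t')\) is non--decreasing (since \(t^\sigma = \alpha^\sigma_\tau(t^\tau)\) with \(\alpha^\sigma_\tau\) non--expansive) with supremum \(\tilde d(t,t')\), so by Dini's theorem for monotone nets of continuous functions on the compact space \(L\times L\) the convergence is uniform; the correspondences \(\{(\pi^\sigma(t),t) : t\in L\}\subseteq X^\sigma\times L\) thus have distortion tending to \(0\), whence \(d_{GH}\bigl(X^\sigma,(L,\tilde d)\bigr) \to 0\). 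So \((L,\tilde d)\) is the Gromov--Hausdorff limit of the net \(\{X^\sigma\}_{\sigma\in\Sigma}\); since this net in \(\mathfrak{GH}\) does not involve the bonding maps at all, the object of the limit does not depend on their choice (and, as with Corollary \ref{Convergence_of_a_net}, \(\{X^\sigma\}\) converges in \(\mathfrak{GH}\)). The one genuinely delicate step is the compactness of \((L,\tilde d)\) above --- in particular the coherent, simultaneous extraction of \emph{all} the cone maps \(\alpha^\sigma\) and the surjectivity of \(A\); everything downstream is then formal.
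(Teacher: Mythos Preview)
Your proof is correct, and the route is genuinely different from the paper's. The paper takes the Hausdorff limit \(X \subset Y\) itself as the inverse--limit object: it extracts the cone maps \(\alpha^\sigma\) by applying Tikhonov's theorem to the \emph{graphs} \(\Gamma(\beta^\sigma_\tau)\) in \(\mathfrak{K}(Y\times Y)\), then spends most of its effort on the universal property, proving the key identity \((\alpha^\sigma)^{-1}(\{x\}) = \bigcup\{\text{cluster points of }(\alpha^{\sigma}_{\eta_\lambda})^{-1}(\{x\})\}\) and showing via Arzel\'a--Ascoli that \(\alpha^{\eta_\lambda}\to \OPN{id}_X\) to force uniqueness of \(\nu\); surjectivity of the limit arrows is obtained by explicitly building a thread through a hypothetical missed point and exhibiting a one--point cone that cannot factor. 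You instead take the set--theoretic inverse limit \(L\) with the sup metric, for which the universal property and uniqueness of \(\nu\) are tautological; the entire burden shifts to proving that \((L,\tilde d)\) is compact, which you handle by producing a \(1\)--Lipschitz surjection \(A\colon X \twoheadrightarrow L\) from the same Hausdorff limit \(X\) (the finite--intersection argument for surjectivity of \(A\) is clean). Your treatment of point~(\ref{inverse limit - f2}) and of the ``only if'' direction --- identifying any abstract limit \(P\) bijectively with \(L\) via one--point cones and then invoking the classical surjectivity of projections for inverse limits of compact Hausdorff spaces --- is shorter than the paper's thread construction; and your proof of point~(\ref{inverse limit - main f1}) via Dini on the monotone net \(\sigma\mapsto d^\sigma(\pi^\sigma t,\pi^\sigma t')\) gives convergence of the \emph{full} net \(\{X^\sigma\}\) to \(L\) in \(\mathfrak{GH}\) directly, whereas the paper argues by the ``every subnet has a convergent sub--subnet'' route. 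The trade--off is that the paper's concrete description of the limit as a subset of \(Y\), together with the convergence \(\alpha^{\eta_\lambda}\to\OPN{id}_X\), is exactly what is reused in the bi--invariant group case (Theorem~\ref{inverse limit - main_bi_mg}) to install a group structure on the limit; with your \(L\) the coordinate--wise multiplication would play the analogous role.
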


\begin{proof}
We first prove that condition on the boundedness (with respect to \(\preccurlyeq\)) of~the~set of objects of~the~inverse system is sufficient for the existence of an inverse limit.
At the beginning, similarly as in the proof of~Theorem~\ref{main}, thanks to the Gromov's theorem, basing~on~the~fact that the family \(\{X^\sigma\}_{\sigma \in \Sigma}\) is bounded from above with respect to \(\preccurlyeq\), we embed isometrically its elements in some compact metric space \((Y\), \(\varrho)\).

We will now produce an object and arrows 
for our inverse limit.
To do so, we will use Tikhonov's theorem.
Consider
\( \gamma_\tau =~(X^\tau\),~\(\{\Gamma(\beta_{\tau}^{\sigma})\}_{\sigma \in \Sigma}) \in \mathfrak{K}(Y) \times \prod_{\sigma \in \Sigma} \mathfrak{K}(Y \times Y) \) for \(\tau \in \Sigma\), where
\(\mathfrak{K}(Y)\) and \(\mathfrak{K}(Y \times Y)\) are~the~hyperspaces of, respectively, \(Y\) and \(Y \times Y\) (considered with the sum metric \(\varrho_1\)), \(\Gamma(\beta_{\tau}^{\sigma})\) are the graphs of~mappings \(\beta_{\tau}^{\sigma}\) for \(\sigma\), \(\tau \in \Sigma\), while~\(\beta_{\tau}^{\sigma} =~\alpha_{\tau}^{\sigma}\) if \(\sigma \leqslant \tau\) and \(\beta_{\tau}^{\sigma} = \OPN{id}_{X^\tau}\) otherwise.
By the Tikhonov's theorem, we find a convergent subnet \( \{\gamma_{\eta_{\lambda}}\}_{\lambda \in \Lambda} \) of~\( \{\gamma_\tau\}_{\tau \in \Sigma}\) in \(\mathfrak{K}(Y) \times \prod_{\sigma \in \Sigma} \mathfrak{K}(Y \times Y)\).
We see thus, that~for~every \(\sigma \in \Sigma\) the net \( \{\Gamma(\beta_{\eta_{\lambda}}^{\sigma})\}_{\lambda \in \Lambda}\) converges to some \( Y^\sigma \subset \mathfrak{K}(Y \times Y) \).
Note that if \( \mathfrak{a}\), \(\mathfrak{b} \in Y_\sigma\), then, by~the~Hausdorff convergence, there exist \(\{\mathfrak{a}_{\eta_\lambda}\}_{\lambda \in \Lambda}\), \(\{\mathfrak{b}_{\eta_\lambda}\}_{\lambda \in \Lambda} \in \prod_{\lambda \in \Lambda} \Gamma(\beta_{\eta_\lambda}^{\sigma})\) such that \(\mathfrak{a}_{\eta_\lambda} \longrightarrow \mathfrak{a}\) and \(\mathfrak{b}_{\eta_\lambda} \longrightarrow \mathfrak{b}\). \linebreak
Let \(p_j \colon Y \times Y \longrightarrow Y\) for~\(j = 1\), \(2\) be the projection onto the \(j\)'th coordinate.
Because
\[
\displaystyle \varrho(p_2(\mathfrak{a}_{\eta_\lambda}), \, p_2(\mathfrak{b}_{\eta_\lambda}))
\leqslant
\displaystyle \varrho(p_1(\mathfrak{a}_{\eta_\lambda}), \, p_1(\mathfrak{b}_{\eta_\lambda}))
\]
for every \(\lambda \in \Lambda\), by taking limits on both sides of the above inequality we see that \(Y^\sigma\)~is a~graph of~a~non--expansive mapping.
Denote that mapping by \(\alpha^\sigma\).
Denote also \(X\) to be the Hausdorff limit of~\(\{X^{\eta_\lambda}\}_{\lambda \in \Lambda}\)~and note, as mappings \(p_1\), \(p_2\) are continuous, that \(X^{\eta_\lambda} = p_1(\Gamma(\beta_{\eta_\lambda}^{\sigma})) \longrightarrow p_1(Y^\sigma) = X\)~and \(X^{\sigma} = p_2(\Gamma(\beta_{\eta_\lambda}^{\sigma})) \longrightarrow~p_2(Y^\sigma) = X^\sigma\).
So, we have already found an object \(X \in \mathsf{cmn}\) together with~the~set of non--expansive (and surjective) arrows \(\{\alpha^\sigma \colon X \longrightarrow X^\sigma\}_{\sigma \in \Sigma}\).
We thus have ingredients that may be constituent parts of the inverse limit we are looking for. 

Now, we prove point (\ref{Inverse limit 1}) of Definition \ref{Inverse limit}.
We will infer this from another fact.
Fix \(\sigma \in \Sigma\).
We will show that
\begin{equation} \label{eq 3}
\displaystyle X
=
\displaystyle \bigcup_{x^\sigma \, \in \, X^{\sigma}} \bigcup\left\{K \colon \ K \textnormal{ is a cluster point of } \left\{\left(\alpha_{\eta_{\lambda}}^{\sigma}\right)^{-1}(\{x^\sigma\})\right\}_{\lambda \geqslant \lambda_0} \textnormal{ in } \mathfrak{K}(Y)\right\},
\end{equation}
where \(\lambda_0\) is such that \(\eta_\lambda \geqslant \sigma\) for every \(\lambda \geqslant \lambda_0\).
(Of course, by the Hausdorff convergence of \(\{X^{\eta_\lambda}\}_{\lambda \geqslant \lambda_0}\), the right hand side of \eqref{eq 3} is contained in \(X\).)
Fix \(x \in X\).
By the convergence of \(\{X^{\eta_\lambda}\}_{\lambda \geqslant \lambda_0}\), we~find a~net \(\{x_{\eta_\lambda}^{\eta_\lambda}\}_{\lambda \geqslant \lambda_0} \in \prod_{\lambda \geqslant \lambda_0} X^{\eta_\lambda}\) that converges to \(x\) with respect to \(\varrho\).
Let \(x_{\eta_\lambda}^{\eta_\kappa} = \alpha_{\eta_\lambda}^{\eta_\kappa}(x_{\eta_\lambda}^{\eta_\lambda}) \in X^{\eta_\kappa}\) for all \(\kappa\),~\(\lambda \in \Lambda\) such~that \(\sigma \leqslant \eta_\kappa \leqslant \eta_\lambda\).
Thanks to Tikhonov's theorem, without~loss~of~generality one can assume that~for~every \(\kappa \geqslant \lambda_0\) the net \(\{x_{\eta_\iota}^{\eta_\kappa}\}_{\iota : \, \eta_\iota \geqslant \eta_\kappa}\) is convergent to~some \(x^{\eta_\kappa} \in X^{\eta_\kappa}\).
Once again, without loss of~generality, we may and do assume that \(\{x^{\eta_\kappa}\}_{\kappa \geqslant \lambda_0}\) converges to~\(z \in X\).
Note that if \(\kappa\), \(\lambda \in \Lambda\) are such that \(\sigma \leqslant \eta_\kappa \leqslant \eta_\lambda\), then by the continuity of \(\alpha_{\eta_\lambda}^{\eta_\kappa}\) we have \(\alpha_{\eta_\lambda}^{\eta_\kappa}(x^{\eta_\lambda}) = x^{\eta_\kappa}\).
In case \(\eta_{\lambda_0} \neq \sigma\), we~set~\(x^\sigma = \alpha_{\eta_{\lambda_0}}^{\sigma}(x^{\eta_\lambda})\), and, in fact, by the above we get that \(\alpha_{\eta_\lambda}^{\sigma}(x^{\eta_\lambda}) = x^{\sigma}\) for every \(\lambda \geqslant \lambda_0\).
Therefore, by the facts that \(\{\Gamma(\alpha_{\eta_\lambda}^\sigma)\}_{\lambda \geqslant \lambda_0}\) converges to \(\Gamma(\alpha^\sigma)\) in \(\mathfrak{K}(Y \times Y)\) and \(\{x^{\eta_\lambda}\}_{\lambda \geqslant \lambda_0}\) converges to \(z\) in \(Y\), we infer that \(\alpha^\sigma(z) = x^\sigma\).
Simultaneously, we have shown that \(\{x^{\eta_\kappa}\}_{\kappa \geqslant \lambda_0} \in~\prod_{\kappa \geqslant \lambda_0}(\alpha_{\eta_{\kappa}}^{\sigma})^{-1}(\{x^\sigma\})\).
Thanks to these last two facts, to~prove~\eqref{eq 3}, it remains to show that \(x = z\).
To see this, consider the net of arrows \(\{\alpha^{\eta_\lambda}\}_{\lambda \geqslant \lambda_0}\).
By~the~Arzel\'{a}--Ascoli theorem, without loss of generality we assume that it is convergent to some non--expansive surjective mapping \(\alpha \colon X \longrightarrow X\).
Hence, it is also isometric.
Therefore, we have
\[
\displaystyle 0 = \varrho(\alpha^{\eta_\kappa}(x), \, \alpha^{\eta_\kappa}(z))
\displaystyle \overset{\kappa \geqslant \lambda_0}{\longrightarrow} \varrho(\alpha(x), \, \alpha(z))
\displaystyle = \varrho(x, \, z),
\]
so \eqref{eq 3} holds.
(It is worth noting that in particular we got an~information that \(\alpha = \OPN{id}_X\), for \(x = z\), and~that~\(\alpha^{\eta_\lambda}(z) = x^{\eta_\lambda}\) for every \(\lambda \geqslant \lambda_0\), what can be seen in a similar way as the fact that \(\alpha^\sigma(z) = x^\sigma\).)

Of course, thanks to \eqref{eq 3} we see that \(\alpha^\sigma\) for every \(\sigma \in \Sigma\) is in fact defined by the rule
\begin{equation} \label{eq 4}
\displaystyle \left(\alpha^{\sigma}\right)^{-1}(\{x\})
=
\displaystyle \bigcup\left\{K \colon \ K \textnormal{ is a cluster point of } \left\{\left(\alpha_{\sigma}^{\eta_\lambda}\right)^{-1}(\{x\})\right\}_{\lambda \geqslant \lambda_0}\right\}
\end{equation}
for \(x \in X^{\sigma}\).
Fix \(\sigma \leqslant \tau\) in \(\Sigma\) and \(x \in X\).
To see that \(\alpha_\tau^\sigma (\alpha^\tau (x)) = \alpha^\sigma (x)\), it is enough to use the above rule together with the fact that
\[
\left(\alpha^{\tau}_{\eta_\lambda}\right)^{-1}(\left\{\alpha^\tau (x)\right\})
\subset 
\left(\alpha^{\tau}_{\eta_\lambda}\right)^{-1} \left((\alpha_{\tau}^{\sigma}\right)^{-1} (\left\{\alpha^\sigma (x)\right\}))
\]
for every \(\lambda \geqslant \lambda_0\), where \(\lambda_0 \in \Lambda\) is such that \(\eta_\lambda \geqslant \tau\) if only \(\lambda \geqslant \lambda_0\).

Assume now that there is a compact metric space \((Z\), \(\zeta)\) and a family of arrows \(\{\xi^\sigma \colon Z\longrightarrow X^\sigma \}_{\sigma \in \Sigma}\) such~that \(\alpha_{\sigma'}^{\sigma} \circ \xi^{\sigma'} = \xi^{\sigma}\) for~\(\sigma' \geqslant \sigma\) in \(\Sigma\).
To finish the main part of~the~proof, it is enough to show that~there exists an arrow \(\nu \colon Z \longrightarrow X\) satisfying \(\alpha^\sigma \circ \nu = \xi^\sigma\) for \(\sigma \in \Sigma\).
Similarly as in the previous part of~the~proof, using Arzel\'{a}--Ascoli theorem we find subnet \(\{\alpha^{\eta_\lambda}\}_{\lambda \in \Lambda}\) of \(\{\alpha^{\sigma}\}_{\sigma \in \Sigma}\) that converges uniformly to \(\OPN{id}_X\).
Assume for a while that there exists some \(\nu \colon Z \longrightarrow X\) such that \(\alpha^\sigma \circ \nu = \xi^\sigma\) for \(\sigma \in \Sigma\).
Then it has to be unique.
Indeed,
it is because if \(x\), \(y \in X\) are such that \(\alpha^\sigma(x) = \alpha^\sigma(y)\) for every \(\sigma \in \Sigma\), then, in particular,
\[
\displaystyle 0 = \varrho(\alpha^{\eta_\lambda}(x), \, \alpha^{\eta_\lambda}(y))
\displaystyle \overset{\lambda \in \Lambda}{\longrightarrow}
\displaystyle \varrho(x, \, y),
\]
so \(x = y\).
Without loss of generality we may and do assume that \(\{\xi^{\eta_\lambda}\}_{\lambda \in \Lambda}\) is uniformly convergent.
Of~course, we set \(\nu = \lim_{\lambda \in \Lambda} \xi^{\eta_\lambda}\).
Note that, thanks to the rule from \eqref{eq 4} and the fact that \(\alpha_{\sigma'}^{\sigma} \circ \xi^{\sigma'} = \xi^{\sigma}\) for~\(\sigma' \geqslant \sigma\) in~\(\Sigma\), we~immediately obtain that \(\alpha^\sigma \circ \nu = \xi^\sigma\) for \(\sigma \in \Sigma\).
If \(\sigma \notin \{\eta_\lambda\}_{\lambda \in \Lambda}\), then we find \(\lambda \in \Lambda\) such that \(\lambda \geqslant \sigma\), and then 
\[
\displaystyle \alpha^\sigma \circ \nu
=
\displaystyle \alpha_{\eta_\lambda}^\sigma \circ (\alpha^{\eta_\lambda} \circ \nu)
=
\displaystyle \alpha_{\eta_\lambda}^\sigma \circ \xi^{\eta_\lambda}
=
\displaystyle \xi^\sigma.
\]

Now we turn to the proof of point (\ref{inverse limit - f2}) of the ``furthermore'' part.
(Note that this will entail the second implication of the main part of the theorem.)
Assume that there is an object \(X\) and a set of arrows \(\{\alpha^\sigma \colon X \longrightarrow~X^\sigma\}_{\sigma \in \Sigma}\) in \(\mathsf{cmn}\) such that the condition (\ref{Inverse limit 1}) from Definition~\ref{Inverse limit} holds, and that there is \(\sigma_0 \in \Sigma\) such that \(\alpha^{\sigma_0}\) is not surjective.
We will prove that~in~this~case the pair \(((X\), \(d)\), \(\{\alpha^\sigma \colon X \longrightarrow X^\sigma\}_{\sigma \in \Sigma})\) is not an inverse limit of~our~system.
Note that, thanks~to our~assumption, there is \(x^{\sigma_0} \in X^{\sigma_0}\) such that \(x^{\sigma_0} \notin \alpha^{\sigma_0}(X)\).
Consider for all \(\sigma \leqslant \tau\) in~\(\Sigma\) the set
\[
\displaystyle T_{\tau}^{\sigma} \defeq \left\{ \left\{ x_\sigma \right\}_{\sigma \in \Sigma} \in \prod_{\sigma \, \in \, \Sigma} X^\sigma \colon \ x_{\sigma_0} = x^{\sigma_0} \textnormal{ and } \alpha_{\tau}^{\sigma}(x_\tau) = x_\sigma \right\}.
\]
Clearly, every such set is a non--empty compact subset of \(\prod_{\sigma \in \Sigma} X^\sigma\) (considered with the product topology).
Define also for \(\tau \in \Sigma\)
\[
\displaystyle T_{\tau} \defeq \left\{ \left\{ x_\sigma \right\}_{\sigma \in \Sigma} \in \prod_{\sigma \, \in \, \Sigma} X^\sigma \colon \ x_{\sigma_0} = x^{\sigma_0} \textnormal{ and } \alpha_{\tau}^{\sigma}(x_\tau) = x_\sigma \textnormal{ for every } \sigma \leqslant \tau \right\}.
\]
Note that, if \(\sigma_j \leqslant \tau\) for \(\rowto{j}{n}\), where \(n \in \mathbb{N}\), then obviously the intersection \(\bigcap_{j = 1}^n T_{\tau}^{\sigma_j}\) is non--empty, and thus the family \(\{T_\tau^\sigma\}_{\sigma \leqslant \tau}\) is centred.
We infer from this fact that \(T_\tau = \bigcap_{\sigma \leqslant \tau} T_{\tau}^{\sigma_j}\) is non--empty compact subset of \(\prod_{\sigma \in \Sigma} X^\sigma\).
Once again, if \(\row{\tau}{n} \in \Sigma\) for some \(n \in \mathbb{N}\), then we find \(\tau \geqslant \tau_j\) for \(\rowto{j}{n}\).
In~this situation we have \(T_\tau \subset \bigcap_{j = 1}^{n} T_{\tau_j}\), so the family \(\{T_\tau\}_{\tau \in \Sigma}\) is centred and the set
\[
T \defeq \left\{ \left\{ x_\sigma \right\}_{\sigma \in \Sigma} \in \prod_{\sigma \, \in \, \Sigma} X^\sigma \colon \ x_{\sigma_0} = x^{\sigma_0} \textnormal{ and } \alpha_{\tau}^{\sigma}(x_\tau) = x_\sigma \textnormal{ for all } \sigma \leqslant \tau \right\},
\]
as an intersection of the aforementioned family, is non--empty.
We can thus fix an element \(\{x^\sigma\}_{\sigma \in \Sigma} \in T\).
Choose arbitrary \(X' \in \mathfrak{GH}\) and for every \(\sigma \in \Sigma\) define \(\xi^\sigma \colon X' \longrightarrow X^\sigma\) in the way that \(\xi^\sigma(x') = x^\sigma\) for~every \(x' \in X'\).
By the choice of \(\{x^\sigma\}_{\sigma \in \Sigma}\), this family clearly fulfills assumptions of (\ref{Inverse limit 2}) from~Definition~\ref{Inverse limit}, but~there is no arrow \(\nu \colon X' \longrightarrow X\) such that \(\alpha^{\sigma_0} \circ \nu = \xi^{\sigma_0}\).

To prove point (\ref{inverse limit - main f1}) of the ``furthermore'' part of the lemma, similarly as in the case of~the~proof for~direct system, it~is~sufficient to note that we have already proven that any subnet of \(\{X^\sigma\}_{\sigma \in \Sigma}\) contains a subnet convergent to~the object of the inverse limit, and thus \(\{X^\sigma\}_{\sigma \in \Sigma}\) has to be convergent in the Gromov--Hausdorff space to this object.
What is more, if the pair \(((Z\), \(\zeta)\), \(\{\xi^\sigma \colon Z\longrightarrow X^\sigma \}_{\sigma \in \Sigma})\) is another inverse limit of our system, then its arrows have to be surjective, because all of the elements of \(\{\alpha^\sigma\}_{\sigma \in \Sigma}\) are surjective, and there is an arrow \(\nu \colon X \longrightarrow Z\) satisfying for every \(\sigma \in \Sigma\) equation \(\alpha^{\sigma} \circ \nu = \xi^{\sigma}\).
\end{proof}

We will now give a simple example of an inverse system, which has no inverse limit.

\begin{ex}
Let \(X^n\) for \(n \in \mathbb{N}\) be the set \(\{\rowft{n}\}\) with discrete metric, and let \(\alpha_n^m \colon X^n \longrightarrow X^m\) for~natural numbers \(m \leqslant n\) be given by the formula
\[
\displaystyle \alpha_{n}^m(k)
\defeq
\left\{
\begin{array}{ll}
\displaystyle k, & \textnormal{ if } k \leqslant m,\\
\displaystyle 0, & \textnormal{ if } k > m.
\end{array}\right.
\]
Then, according to the above theorem, the system \((\{X^n\}_{n \in \mathbb{N}}\), \(\{\alpha_n^m\}_{m \leqslant n})\) does not have an inverse limit.
\end{ex}

If we consider in addition to \(\preccurlyeq\) the other classical partial orders on \(\mathfrak{GH}\) from \cite{Ultrametrics}, we get a very nice characterisation of the existence of an inverse limit of an inverse system.
These orders are defined as follows:
\[
\begin{array}{lll}
(X, \, d) \preccurlyeq_s (Y, \, \varrho) & \overset{\OPN{def}}{\Longleftrightarrow} & X \preccurlyeq K \textnormal{ for some closed subset } K \textnormal{ of } Y,\\
(X, \, d) \preccurlyeq_i (Y, \, \varrho) & \overset{\OPN{def}}{\Longleftrightarrow} & X \textnormal{ is isometric to a subset of } Y.
\end{array}
\]

Combining previous result with Theorem \(5.3\) from \cite{Ultrametrics} we obtain the following characterisation.

\begin{thm} \label{inverse limit - full version}
Let \((\{(X^\sigma\), \(d^\sigma)\}_{\sigma \in \Sigma}\), \(\{\alpha^{\sigma}_{\tau} \colon X^\tau\longrightarrow X^\sigma\}_{\sigma \leqslant \tau})\) be an inverse system in \(\mathsf{cmn}\), where \(\alpha^\sigma_\tau\) are~surjective for all \(\sigma \leqslant \tau\).
Then the following conditions are equivalent:
\begin{enumerate}
\item this system has inverse limit,
\item \(\{X^\sigma\}_{\sigma \in \Sigma}\) is upper bounded with respect to \(\preccurlyeq\),
\item \(\{X^\sigma\}_{\sigma \in \Sigma}\) is upper bounded with respect to \(\preccurlyeq_s\), \label{inverse limit - full version 3}
\item \(\{X^\sigma\}_{\sigma \in \Sigma}\) is upper bounded with respect to \(\preccurlyeq_i\),
\item there exists \(Y \in \mathfrak{GH}\) such that \(X^\sigma \preccurlyeq Y\), \(X^\sigma \preccurlyeq_s Y\) and \(X^\sigma \preccurlyeq_i Y\) for every \(\sigma \in \Sigma\),
\item \(\{X^\sigma\}_{\sigma \in \Sigma}\) is uniformly compact. \label{inverse limit - full version 6}
\end{enumerate}
Furthermore, if the limit exists, its object does~not depend, up~to~isometry, on the choice of surjective arrows for the system.
\end{thm}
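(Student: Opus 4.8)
The plan is to obtain the statement by assembling three results already at hand: the ``categorical'' equivalence $(1)\Leftrightarrow(2)$ is nothing more than a reformulation of Lemma~\ref{inverse limit - main}; the block of order-theoretic equivalences $(2)\Leftrightarrow(3)\Leftrightarrow(4)\Leftrightarrow(5)$ is a statement purely about the subset $\{X^\sigma\}_{\sigma\in\Sigma}\subset\mathfrak{GH}$ together with the orders $\preccurlyeq$, $\preccurlyeq_s$, $\preccurlyeq_i$, which is exactly Theorem~$5.3$ of \cite{Ultrametrics}; and $(4)\Leftrightarrow(6)$ is immediate from Gromov's Theorem~\ref{Gromov}. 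The final ``furthermore'' clause then needs no argument: once $(1)$ holds, point~(\ref{inverse limit - main f1}) of Lemma~\ref{inverse limit - main} already says the object of the limit is independent, up to isometry, of the choice of surjective arrows. So the whole proof is a matter of lining these pieces up into one cycle of implications.

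First I would settle $(1)\Leftrightarrow(2)$ by unwinding definitions. By Lemma~\ref{inverse limit - main}, the given inverse system has an inverse limit if and only if there is an object $X\in\mathsf{cmn}$ admitting a surjective (necessarily non-expansive) arrow $X\longrightarrow X^\sigma$ for every $\sigma\in\Sigma$. But a non-expansive surjection $X\longrightarrow X^\sigma$ is precisely a witness of $X^\sigma\preccurlyeq X$; hence ``such an $X$ exists'' is literally the assertion that $X$ is an upper bound of $\{X^\sigma\}_{\sigma\in\Sigma}$ in $(\mathfrak{GH},\preccurlyeq)$, i.e.\ condition~$(2)$. This is the only place where the inverse-limit machinery of the previous subsection enters.

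Next I would dispatch the cheap directions inside the order block. The pointwise implication $\preccurlyeq_i\Rightarrow\preccurlyeq_s$ is elementary: if a compact $X$ is isometric to a subset $S$ of $Y$, then $S$ is closed in $Y$ and the inverse isometry $S\longrightarrow X$ is a non-expansive surjection, so $X\preccurlyeq_s Y$; this already yields $(4)\Rightarrow(3)$, and trivially $(5)$ implies each of $(2),(3),(4)$. For $(4)\Leftrightarrow(6)$: saying $\{X^\sigma\}_{\sigma\in\Sigma}$ is upper bounded with respect to $\preccurlyeq_i$ means exactly that some compact metric space contains an isometric copy of every $X^\sigma$, which is clause~(3) of Theorem~\ref{Gromov}; that clause is equivalent to clause~(2) there, namely that $\{X^\sigma\}_{\sigma\in\Sigma}$ is uniformly compact, which is~$(6)$. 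I would also record the quick observation that $(2)\Rightarrow(6)$ (and likewise $(3)\Rightarrow(6)$): a non-expansive surjective image of a fixed compact, hence totally bounded, space of bounded diameter is again totally bounded of bounded diameter, with $\varepsilon$-nets of a size controlled uniformly in $\sigma$, so the family is uniformly compact. All the remaining implications --- in particular the passage from a $\preccurlyeq$- or $\preccurlyeq_s$-upper bound, or from uniform compactness, to a single compact metric space $Y$ dominating $\{X^\sigma\}_{\sigma\in\Sigma}$ simultaneously in all three orders (condition~$(5)$) --- are exactly what Theorem~$5.3$ of \cite{Ultrametrics} supplies, and I would cite it for those.

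The only genuinely substantial step --- and the one I would expect to be the obstacle were it not already available --- is precisely that last one: turning a uniformly compact family (equivalently, one that fits isometrically into a single compact ambient space) into a family that is a uniform non-expansive quotient of one compact space, i.e.\ producing a common ``non-expansive cover''. Since Theorem~$5.3$ of \cite{Ultrametrics} is at our disposal, this reduces to bookkeeping; the one point to keep scrupulously explicit in the write-up is the matching of quantifiers, namely that condition~$(1)$, through Lemma~\ref{inverse limit - main}, furnishes one object $X$ with surjections onto all the $X^\sigma$ at once (the limiting cone), which is ``upper bounded'' --- one common bound --- in conditions~$(2)$--$(5)$, and not merely the weaker ``each $X^\sigma$ is dominated by something''.
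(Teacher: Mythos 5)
Your proposal is correct and follows exactly the route the paper takes: the paper derives this theorem in one line by combining Lemma~\ref{inverse limit - main} (which gives $(1)\Leftrightarrow(2)$, since a common surjective non--expansive source is precisely a $\preccurlyeq$--upper bound) with Theorem~$5.3$ of \cite{Ultrametrics} for the order--theoretic equivalences, with Gromov's Theorem~\ref{Gromov} handling uniform compactness and the ``furthermore'' clause inherited from point~(\ref{inverse limit - main f1}) of the lemma. Your write-up merely makes explicit the cheap implications the paper leaves implicit.
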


For brevity, and because this partial order is the most natural one for our considerations, we will continue~to~use only \(\preccurlyeq\).
However, keep in mind that whenever we require from~the~family of compact metric spaces to~be bounded (or bounded from above) with respect to \(\preccurlyeq\), we can replace this~requirement with~any~of~(\ref{inverse limit - full version 3})--(\ref{inverse limit - full version 6}) from the previous theorem.

\begin{cor} \label{Convergence_of_a_net_2}
If the pair \((\{(X^\sigma\), \(d^\sigma)\}_{\sigma \in \Sigma}\), \(\{\alpha^{\sigma}_{\tau} \colon X^\tau\longrightarrow X^\sigma\}_{\sigma \leqslant \tau})\) is an inverse system as in the previous theorem and \(\{X^\sigma\}_{\sigma \in \Sigma}\) is bounded from above with respect to \(\preccurlyeq\), then the net \(\{X^\sigma\}_{\sigma \in \Sigma}\) is convergent in~the~Gromov--Hausdorff space.
\end{cor}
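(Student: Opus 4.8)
The plan is to read this off from Theorem~\ref{inverse limit - full version} together with what was already established inside the proof of Lemma~\ref{inverse limit - main}. First I would invoke Theorem~\ref{inverse limit - full version}: the standing hypothesis that \(\{X^\sigma\}_{\sigma\in\Sigma}\) is bounded from above with respect to \(\preccurlyeq\) is exactly condition~(2) there, hence the inverse system admits an inverse limit; write \(X\in\mathfrak{GH}\) for its object. By the ``furthermore'' clause of Theorem~\ref{inverse limit - full version} (equivalently point~(\ref{inverse limit - main f1}) of Lemma~\ref{inverse limit - main}), \(X\) is determined up to isometry, independently of the chosen surjective arrows. The remaining task is to prove that the net \(\{X^\sigma\}_{\sigma\in\Sigma}\) converges to \(X\) in \((\mathfrak{GH},d_{GH})\).

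For this I would use the standard characterisation of net convergence in a topological space: \(\{X^\sigma\}_{\sigma\in\Sigma}\to X\) if and only if every subnet of it has a further subnet converging to \(X\). But this is exactly what the proof of Lemma~\ref{inverse limit - main} provides: there it is shown (via the family being uniformly compact, so that Gromov's Theorem~\ref{Gromov} applies, and then via the Tikhonov/Arzel\'{a}--Ascoli construction) that \emph{every} subnet of \(\{X^\sigma\}_{\sigma\in\Sigma}\) contains a further subnet which is Hausdorff convergent, in a suitable common ambient compact space, to a space that serves as the object of the inverse limit of the corresponding cofinal subsystem; and since that inverse limit coincides with the inverse limit of the whole system, this Hausdorff limit is isometric to \(X\). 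As the Gromov--Hausdorff distance is dominated by the Hausdorff distance in any common ambient space, such a further subnet also converges to \(X\) in \(\mathfrak{GH}\). Feeding this into the net-convergence criterion yields \(\{X^\sigma\}_{\sigma\in\Sigma}\to X\) in \(\mathfrak{GH}\).

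I do not expect a genuine obstacle here: all the analytic work (Gromov's theorem, Tikhonov, Arzel\'{a}--Ascoli, identification of the limit object) is already packaged in Lemma~\ref{inverse limit - main}, and the present corollary is in essence a transcription of the sentence at the end of that proof once the existence of the limit is secured via Theorem~\ref{inverse limit - full version}. The only point that requires a line of care is the bookkeeping of subnets --- namely, that an arbitrary subnet of \(\{X^\sigma\}_{\sigma\in\Sigma}\) (reindexed along a monotone cofinal map) is again an inverse system in \(\mathsf{cmn}\) with surjective arrows, bounded from above by the same space, so that Lemma~\ref{inverse limit - main} and Gromov's theorem apply to it, and that the inverse limit of such a cofinal subsystem agrees with that of the original system. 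Both facts are routine and involve no new ideas.
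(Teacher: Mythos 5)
Your proposal is correct and follows essentially the same route as the paper: the paper's own justification (given inside the proof of Lemma~\ref{inverse limit - main}, point~(\ref{inverse limit - main f1}) of the ``furthermore'' part) is precisely that every subnet of \(\{X^\sigma\}_{\sigma\in\Sigma}\) contains a further subnet converging to the inverse--limit object, which is unique up to isometry, so the whole net converges to it in \(\mathfrak{GH}\). Your additional remarks on reindexing subnets as cofinal subsystems and on \(d_{GH}\) being dominated by the ambient Hausdorff distance are the same routine bookkeeping the paper leaves implicit.
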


We infer from this that every increasing and bounded from above (with respect to \(\preccurlyeq\)) sequence of compact metric spaces is convergent in \(\mathfrak{GH}\).

\begin{cor} 
Let \(\{(X_n\), \(d_n)\}_{n \in \mathbb{N}}\) be an increasing and bounded from above (with respect to \(\preccurlyeq\)) sequence in~\(\mathfrak{GH}\).
Then it has a limit in \(\mathfrak{GH}\).
\end{cor}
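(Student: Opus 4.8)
The plan is to deduce this corollary directly from the results on inverse limits established above, as the sentence immediately preceding the statement indicates. First I would unpack the hypothesis in terms of morphisms: saying that \(\{(X_n, d_n)\}_{n \in \mathbb{N}}\) is increasing with respect to \(\preccurlyeq\) means precisely that for every \(n\) there is a non--expansive surjection \(\varphi_n \colon X_{n+1} \longrightarrow X_n\). Putting \(\alpha_n^n = \OPN{id}_{X_n}\) and \(\alpha_m^n = \varphi_n \circ \varphi_{n+1} \circ \cdots \circ \varphi_{m-1}\) for \(n < m\) yields a family of morphisms in \(\mathsf{cmn}\) which, being defined by composition, automatically satisfies the cocycle identities \(\alpha_m^n = \alpha_\varrho^n \circ \alpha_m^\varrho\) for \(n \le \varrho \le m\); moreover each \(\alpha_m^n\) is surjective as a composition of surjections. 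Hence \((\{X_n\}_{n \in \mathbb{N}}, \{\alpha_m^n\}_{n \le m})\) is an inverse system in \(\mathsf{cmn}\) with surjective arrows, of exactly the kind treated in Lemma~\ref{inverse limit - main} and Theorem~\ref{inverse limit - full version}.

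Next I would observe that ``bounded from above with respect to \(\preccurlyeq\)'' is nothing other than the assertion that there is \(Y \in \mathfrak{GH}\) with \(X_n \preccurlyeq Y\) for all \(n\), i.e.\ a non--expansive surjection \(Y \longrightarrow X_n\) for each \(n\) --- which is precisely the boundedness hypothesis in the statement of Corollary~\ref{Convergence_of_a_net_2}. Applying that corollary to the inverse system built in the previous paragraph, we conclude that the sequence \(\{X_n\}_{n \in \mathbb{N}}\), viewed as a net in \(\mathfrak{GH}\), converges; its limit is the sought--after limit in \(\mathfrak{GH}\). (Alternatively one could first invoke Theorem~\ref{inverse limit - full version} to see that the system has an inverse limit and then repeat the convergence argument from the proof of Lemma~\ref{inverse limit - main}, but Corollary~\ref{Convergence_of_a_net_2} already packages exactly this conclusion.)

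There is no genuine obstacle here: the entire content is the translation of the order--theoretic hypotheses on the sequence into the language of inverse systems, after which the earlier machinery applies verbatim. The only point needing a word of care is checking that the chosen maps \(\varphi_n\) really do assemble into a legitimate inverse system in \(\mathsf{cmn}\), but this is immediate since \(\preccurlyeq\) is witnessed by non--expansive surjections and these are closed under composition, so the required identities hold trivially. I would also remark that ``increasing'' need not be strict and the \(X_n\) may stabilize up to isometry --- this does not affect the argument --- and that, since the one--point metric space admits a non--expansive surjection from any compact metric space, every sequence in \(\mathfrak{GH}\) is automatically bounded below, so no further hypothesis is needed for the analogue of the monotone convergence theorem in \(\mathbb{R}\) to hold here.
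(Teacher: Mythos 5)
Your proposal is correct and is exactly the argument the paper intends: the increasing sequence is rewritten as an inverse system in \(\mathsf{cmn}\) with surjective arrows, the upper bound with respect to \(\preccurlyeq\) supplies the boundedness hypothesis, and Corollary~\ref{Convergence_of_a_net_2} yields convergence of the sequence in \(\mathfrak{GH}\). The paper leaves this inference implicit ("We infer from this\ldots"), and your write-up simply makes the same translation explicit.
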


This result, together with its previous version for decreasing sequences (see Corollary \ref{convergence-decreasing}), allows to state the analog for \(\mathfrak{GH}\) of the property of convergence of monotonic and bounded sequences in \(\mathbb{R}\). 

\begin{thm} \label{convergence of monotonic and bounded sequences}
Let \(\{(X_n\), \(d_n)\}_{n \in \mathbb{N}}\) be a monotonic and bounded (with respect to \(\preccurlyeq\)) sequence in \(\mathfrak{GH}\). \linebreak
Then it has a limit in \(\mathfrak{GH}\).
\end{thm}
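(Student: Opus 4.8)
The plan is to split on the direction of monotonicity and reduce, in each case, to one of the two corollaries proved immediately above. Recall that a monotonic sequence $\{(X_n,d_n)\}_{n\in\mathbb{N}}$ in $\mathfrak{GH}$ is either \emph{decreasing}, i.e. $X_{n+1}\preccurlyeq X_n$ for all $n$, or \emph{increasing}, i.e. $X_n\preccurlyeq X_{n+1}$ for all $n$ (if it happens to be both, it is constant up to isometry, and either case below applies). So it suffices to treat these two cases.

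If the sequence is decreasing, I would first observe that the boundedness hypothesis is automatic here: composing the relations $X_{n+1}\preccurlyeq X_n$ yields $X_n\preccurlyeq X_1$ for every $n$, so $X_1$ is an upper bound, while the one-point space is a lower bound for everything in $\mathfrak{GH}$. Hence Corollary \ref{convergence-decreasing} applies verbatim and $\{X_n\}_{n\in\mathbb{N}}$ has a limit in $\mathfrak{GH}$. (Concretely, one chooses for each $n$ a non-expansive surjection $g_n\colon X_n\to X_{n+1}$ witnessing $X_{n+1}\preccurlyeq X_n$, forms the resulting direct system in $\mathsf{cmn}$, and invokes Corollary \ref{Convergence_of_a_net}; but this is exactly what Corollary \ref{convergence-decreasing} already packages.)

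If the sequence is increasing, then, being bounded, it is in particular bounded from above with respect to $\preccurlyeq$, which is precisely the hypothesis of the corollary on increasing sequences proved just above; hence $\{X_n\}_{n\in\mathbb{N}}$ again has a limit in $\mathfrak{GH}$. (Here one instead picks non-expansive surjections $X_{n+1}\to X_n$ witnessing $X_n\preccurlyeq X_{n+1}$, assembles the corresponding inverse system in $\mathsf{cmn}$, and applies Corollary \ref{Convergence_of_a_net_2}, whose applicability rests on Lemma \ref{inverse limit - main} and the equivalences of Theorem \ref{inverse limit - full version}.) Since one of the two cases always holds, the sequence converges in $\mathfrak{GH}$, which is the assertion.

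There is essentially no obstacle beyond bookkeeping: all the genuine content — the existence of the limit object and the Gromov--Hausdorff convergence of the net of spaces — was already extracted in Theorem \ref{main}, Lemma \ref{inverse limit - main}, and Corollaries \ref{Convergence_of_a_net} and \ref{Convergence_of_a_net_2}. The only points that require a moment of care are matching each monotonicity direction with the correct (direct versus inverse) system convention, and noting that the boundedness assumption is actually used only in the increasing case, the decreasing case being unconditional.
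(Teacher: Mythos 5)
Your proposal is correct and is essentially the proof the paper intends: the theorem is stated as an immediate consequence of Corollary \ref{convergence-decreasing} (decreasing case, via the direct system and Corollary \ref{Convergence_of_a_net}) and the corollary on increasing bounded-above sequences (via the inverse system and Corollary \ref{Convergence_of_a_net_2}), which is exactly your case split. Your added observations --- that boundedness is only needed in the increasing case, and that the two monotonicity directions correspond to direct versus inverse systems of non--expansive surjections --- are accurate and consistent with the paper's conventions for \(\preccurlyeq\).
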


\section{Compact metric groups with bi--invariant metrics} \label{Section 3}
\subsection{Inductive limit}
As announced, we will now prove the analog of Theorem \ref{main} in the version for metric groups with bi--invariant metrics.

\begin{thm} \label{main_groups}
Let \((\{(G_\sigma\), \(d_\sigma)\}_{\sigma \in \Sigma}\), \(\{\alpha_{\sigma}^{\tau} \colon G_\sigma \longrightarrow G_\tau\}_{\sigma \leqslant \tau})\) be a direct system in the category of compact metric groups with bi--invariant metrics with non--expansive homomorphisms as arrows, where \(\alpha_\sigma^\tau\) are~surjective for all \(\sigma \leqslant \tau\).
Then this system has inductive limit.\\
Furthermore,
\begin{enumerate}
\item its object does not depend, up~to~isometric isomorphism, on the choice of surjective arrows for~the~system, and \label{main_groups f1}
\item its arrows are surjective. \label{main_groups f2}
\end{enumerate}
\end{thm}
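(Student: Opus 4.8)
The plan is to bootstrap on Theorem \ref{main}: forget the group structure, apply the space-level result to the underlying direct system in \(\mathsf{cmn}\), obtain the limit object \(X\) and surjective non--expansive arrows \(\{\alpha_\sigma \colon G_\sigma \longrightarrow X\}_{\sigma \in \Sigma}\) satisfying \(\alpha_\tau \circ \alpha_\sigma^\tau = \alpha_\sigma\), and then transport a group structure onto \(X\) so that each \(\alpha_\sigma\) becomes a homomorphism. First I would define multiplication on \(X\) by the natural recipe dictated by the universal property: for \(x\), \(y \in X\), pick \(\sigma\) and preimages \(g_\sigma \in \alpha_\sigma^{-1}(\{x\})\), \(h_\sigma \in \alpha_\sigma^{-1}(\{y\})\), and set \(x \cdot y \defeq \alpha_\sigma(g_\sigma h_\sigma)\); likewise \(x^{-1} \defeq \alpha_\sigma(g_\sigma^{-1})\) and \(e_X \defeq \alpha_\sigma(e_{G_\sigma})\). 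Well-definedness is exactly the point where bi--invariance is needed: if \(g_\sigma\), \(g_\sigma'\) are two preimages of \(x\) and \(h_\sigma\), \(h_\sigma'\) two preimages of \(y\), then \(d_\tau(\alpha_\sigma^\tau(g_\sigma h_\sigma), \alpha_\sigma^\tau(g_\sigma' h_\sigma')) \leqslant d_\tau(\alpha_\sigma^\tau(g_\sigma)\alpha_\sigma^\tau(h_\sigma), \alpha_\sigma^\tau(g_\sigma')\alpha_\sigma^\tau(h_\sigma'))\), and by the triangle inequality together with left-- and right--invariance this is bounded by \(d_\tau(\alpha_\sigma^\tau(g_\sigma), \alpha_\sigma^\tau(g_\sigma')) + d_\tau(\alpha_\sigma^\tau(h_\sigma), \alpha_\sigma^\tau(h_\sigma'))\), and both terms tend to \(0\) along the cofinal net appearing in the proof of Theorem \ref{main} (recall there \(\alpha_\sigma\) arises as a limit of the \(\alpha_\sigma^{\eta_\lambda}\)), forcing the two candidate products to coincide. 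Consistency across different choices of \(\sigma\) follows because \(\Sigma\) is upward directed and \(\alpha_\tau \circ \alpha_\sigma^\tau = \alpha_\sigma\).

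Next I would verify that \((X, \cdot)\) is a group: associativity, the unit laws, and the inverse laws all follow by lifting the identities to some common \(G_\sigma\) (where they hold since \(G_\sigma\) is a group) and pushing forward via \(\alpha_\sigma\), using the compatibility \(\alpha_\sigma(g)\alpha_\sigma(h) = \alpha_\sigma(gh)\) that is now built into the definition. Then I would check that the metric \(d\) on \(X\) (the one inherited from Theorem \ref{main}) is bi--invariant and compatible with a topological group structure: for left--invariance, \(d(zx, zy)\) is computed by lifting \(z\), \(x\), \(y\) to \(G_\sigma\) and using that \(\alpha_\sigma\) is non--expansive and \(\alpha_\sigma\) is, in the relevant sense, distance-realising in the limit — more carefully, one shows \(d(\alpha_\sigma(g)\alpha_\sigma(h), \alpha_\sigma(g)\alpha_\sigma(h')) = \lim_\lambda d_{\eta_\lambda}(\alpha_\sigma^{\eta_\lambda}(gh), \alpha_\sigma^{\eta_\lambda}(gh')) = \lim_\lambda d_{\eta_\lambda}(\alpha_\sigma^{\eta_\lambda}(h), \alpha_\sigma^{\eta_\lambda}(h'))\) by left--invariance downstairs, and the right side is \(d(\alpha_\sigma(h), \alpha_\sigma(h'))\); right--invariance is symmetric, and continuity of multiplication and inversion then follows from bi--invariance in the standard way. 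Hence \((X, d)\) is a compact metric group with bi--invariant metric, and each \(\alpha_\sigma\) is a surjective non--expansive homomorphism satisfying point (\ref{main_groups f2}).

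For the universal property, suppose \((Z, \zeta)\) is a compact metric group with bi--invariant metric and \(\{\xi_\sigma \colon G_\sigma \longrightarrow Z\}\) are non--expansive homomorphisms with \(\xi_\tau \circ \alpha_\sigma^\tau = \xi_\sigma\). Theorem \ref{main}, applied at the level of \(\mathsf{cmn}\), already hands us a unique non--expansive map \(\nu \colon X \longrightarrow Z\) with \(\nu \circ \alpha_\sigma = \xi_\sigma\); it remains only to see \(\nu\) is a homomorphism, and this is immediate from the definition of multiplication on \(X\): \(\nu(x \cdot y) = \nu(\alpha_\sigma(g_\sigma h_\sigma)) = \xi_\sigma(g_\sigma h_\sigma) = \xi_\sigma(g_\sigma)\xi_\sigma(h_\sigma) = \nu(\alpha_\sigma(g_\sigma))\nu(\alpha_\sigma(h_\sigma)) = \nu(x)\nu(y)\), with a parallel one-line check for inverses and the unit. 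Uniqueness is inherited from Theorem \ref{main}. Finally, point (\ref{main_groups f1}) — independence of the object up to isometric isomorphism — follows because the object is already independent up to isometry by Theorem \ref{main}(\ref{main f1}), and the isometry realising two such limits is, by the universal property just proven, forced to be a homomorphism, hence an isometric isomorphism; and any other inductive limit in this category has surjective arrows for the same reason as in Theorem \ref{main}.

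I expect the main obstacle to be the well-definedness and bi--invariance bookkeeping in the first two paragraphs: one must be careful that the limit arrow \(\alpha_\sigma\) genuinely satisfies \(d(\alpha_\sigma(g), \alpha_\sigma(h)) = \lim_\lambda d_{\eta_\lambda}(\alpha_\sigma^{\eta_\lambda}(g), \alpha_\sigma^{\eta_\lambda}(h))\) for the \emph{specific} cofinal net fixed in the proof of Theorem \ref{main} — this is implicit there but needs to be invoked explicitly here — and that bi--invariance (not merely left--invariance) of each \(d_\sigma\) is what makes the two-sided estimate on \(d_\tau(\alpha_\sigma^\tau(g)\alpha_\sigma^\tau(h), \alpha_\sigma^\tau(g')\alpha_\sigma^\tau(h'))\) go through; left--invariance alone would only control one side and the argument would break, which is precisely the phenomenon flagged for left--invariant metrics in the introduction.
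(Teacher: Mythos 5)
Your construction of the limit itself is sound and essentially the paper's: forget the group structure, invoke Theorem \ref{main} to get the object \(X\) and the surjective non--expansive arrows, define the product of \(x\) and \(y\) by pushing forward a product of preimages, and use bi--invariance to get the two--sided estimate \(d_\tau(\alpha_\sigma^\tau(g_\sigma h_\sigma), \, \alpha_\sigma^\tau(g_\sigma' h_\sigma')) \leqslant d_\tau(\alpha_\sigma^\tau(g_\sigma), \, \alpha_\sigma^\tau(g_\sigma')) + d_\tau(\alpha_\sigma^\tau(h_\sigma), \, \alpha_\sigma^\tau(h_\sigma'))\) that makes this well defined along the cofinal net on which \(\alpha_\sigma = \lim_\lambda \alpha_\sigma^{\eta_\lambda}\). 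Your verification of the universal property (the map \(\nu\) supplied by Theorem \ref{main} is automatically a homomorphism because multiplication on \(X\) is defined through the \(\alpha_\sigma\)) is also correct, and is in fact more explicit than the paper, which simply defers to the proof of Theorem \ref{main} for this step.

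The gap is in your treatment of point (\ref{main_groups f1}). You argue that the isometry between the limit objects \(G\) and \(H\) of two systems \((\{G_\sigma\}, \{\alpha_\sigma^\tau\})\) and \((\{G_\sigma\}, \{\beta_\sigma^\tau\})\) (same objects, different surjective arrows) is ``by the universal property just proven, forced to be a homomorphism.'' But the universal property only compares cones over a \emph{fixed} system; the family \(\{\beta_\sigma \colon G_\sigma \longrightarrow H\}\) is a cone over the second system, not over the first (one would need \(\beta_\tau \circ \alpha_\sigma^\tau = \beta_\sigma\), which there is no reason to expect), so no morphism \(G \longrightarrow H\) is produced by it. Theorem \ref{main}(\ref{main f1}) only says that \(G\) and \(H\) are isometric as metric spaces --- it singles out no particular isometry, and the two group structures are defined via different arrow families, so an arbitrary isometry need not respect them. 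This is exactly the part the paper works hardest on: it first proves that multiplication on the limit is jointly continuous with respect to the net convergence (if \(g_\sigma \longrightarrow g\) and \(h_\sigma \longrightarrow h\), then \(g_\sigma \cdot h_\sigma \longrightarrow g \cdot h\), again via bi--invariance), then realizes a non--expansive surjection \(\beta \colon G \longrightarrow H\) as the Hausdorff limit of the graphs \(\Gamma(\beta_{\eta_\lambda})\) in \(\mathfrak{K}(Y \times Y)\), and uses the continuity property for both group structures to see that \(\Gamma(\beta)\) is closed under coordinatewise multiplication, i.e., that \(\beta\) is a homomorphism (and, being a non--expansive surjection between isometric compacta, an isometry). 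Some argument of this kind is needed; the universal property cannot substitute for it.
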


\begin{proof}
We will proceed similarly as in the proof of Theorem \ref{main}.
First, we fix arbitrarily \(\sigma_0 \in \Sigma\) and embed isometrically all the elements of the family \(\{G_{\sigma}\}_{\sigma \geqslant \sigma_0}\) in some compact metric space \((Y\), \(\varrho)\), and denote, after~passing to a subnet, \(G\) to be the Hausdorff limit of \(\{G_{\sigma}\}_{\sigma \geqslant \sigma_0}\) in \(Y\).
Let \(\alpha_{\sigma_0} \colon G_{\sigma_0} \longrightarrow G\) be the~uniform limit of the family \(\{\alpha_{\sigma_0}^{\sigma} \colon G_{\sigma_0} \longrightarrow Y\}_{\sigma \geqslant \sigma_0}\).
We already know that \(\alpha_{\sigma_0}\) is a non--expansive surjection.
It~should also be shown that it is also a homomorphism, and, before this, that one can define a group structure on~\(G\) in the way that the metric \(\varrho|_{G \times G}\) on \(G\) is bi--invariant.

We will now set a group structure on \(G\) by defining a binary operation on it.
Define \(\cdot \colon G \times G \longrightarrow G\) in~the~following way (for the convenience, we will also use this notation in the case of group operations for~the~remaining groups):
\[
\displaystyle g_1 \cdot g_2 \defeq \lim_{\sigma \geqslant \sigma_0} \alpha_{\sigma_0}^\sigma(g_1' \cdot g_2'),
\]
for \(g_1\), \(g_2 \in G\), where \(g_1'\), \(g_2' \in G_{\sigma_0}\) are such that \(g_j = \lim_{\sigma \geqslant \sigma_0} \alpha_{\sigma_0}^\sigma(g_j')\)  for \(j = 1\), \(2\).
This operation is~well~defined.
Indeed, if \(g_j \in G\) and \(g_j'\), \(g_j'' \in G_{\sigma_0}\) are such that 
\(
g_j = 
\lim_{\sigma \geqslant \sigma_0} \alpha_{\sigma_0}^\sigma(g_j')
=
\lim_{\sigma \geqslant \sigma_0} \alpha_{\sigma_0}^\sigma(g_j'')
\)
for~\(j = 1\), \(2\), then for \(\sigma \geqslant \sigma_0\)
\[
\begin{array}{lll}
\displaystyle d_\sigma\left(\alpha_{\sigma_0}^\sigma\left(g_1' \cdot g_2'\right), \, \alpha_{\sigma_0}^\sigma\left(g_1'' \cdot g_2''\right)\right)
& \leqslant &
\displaystyle d_{\sigma}\left(\alpha_{\sigma_0}^\sigma\left(g_1' \cdot g_2'\right), \, \alpha_{\sigma_0}^\sigma\left(g_1'' \cdot g_2'\right)\right)
+
\displaystyle d_{\sigma}\left(\alpha_{\sigma_0}^\sigma\left(g_1'' \cdot g_2'\right), \, \alpha_{\sigma_0}^\sigma\left(g_1'' \cdot g_2''\right)\right)\\
& = &
\displaystyle d_\sigma\left(\alpha_{\sigma_0}^\sigma\left(g_1'\right), \, \alpha_{\sigma_0}^\sigma\left(g_1''\right)\right)
+
\displaystyle d_\sigma\left(\alpha_{\sigma_0}^\sigma\left(g_2'\right), \, \alpha_{\sigma_0}^\sigma\left(g_2''\right)\right),
\end{array}
\]
while the right hand side of this inequality clearly converges to \(0\).
Verifying that this operation sets the~group structure on \(G\) is a straightforward computation and is left as an easy exercise.
What is more, by the definition of our operation, it is obvious that \(\alpha_{\sigma_0}\) is a homomorphism.

The fact that \(\varrho|_{G \times G}\) is bi--invariant is an immediate consequence of~the~fact that every metric  \(d_\sigma\) for~\(\sigma \geqslant \sigma_0\) is bi--invariant.
This means that \(G\) is a metric group with bi--invariant metric \(\varrho|_{G \times G}\).

The rest of the proof, except for the ``furthermore'' part, is carried out exactly like for analogous fragments of the proof of Theorem \ref{main}.
In turn, the proof of the aforementioned ``furthermore'' part is slightly more complitated than previously.
To prove it, we will use the fact that in the situation as in the above proof, if~for every \(\sigma \geqslant \sigma_0\) we have \(g_\sigma\), \(h_\sigma \in G_\sigma\) such that \(g_\sigma \overset{_{\sigma \geqslant \sigma_0}}{\longrightarrow} g\) and \(h_\sigma \overset{_{\sigma \geqslant \sigma_0}}{\longrightarrow} h\) (in this situtaion both \(g\) and~\(h\) are in \(G\)), then \(g_\sigma \cdot h_\sigma \overset{_{\sigma \geqslant \sigma_0}}{\longrightarrow} g \cdot h \in G\).
We will prove it similarly as the fact, that the operation \(\cdot\) on \(G\) is~well~defined.
Fix \(g'\) and \(h'\) in \(G_{\sigma_0}\) such that \(g = \lim_{\sigma \geqslant \sigma_0} \alpha_{\sigma_0}^\sigma(g')\) and \(h = \lim_{\sigma \geqslant \sigma_0} \alpha_{\sigma_0}^\sigma(h')\).
(We remember that then \(g \cdot h = \lim_{\sigma \geqslant \sigma_0} \alpha_{\sigma_0}^\sigma(g') \cdot \alpha_{\sigma_0}^\sigma(h')\).)
Using the fact that the metrics on our groups are bi--invariant, we can write
\[
\begin{array}{lll}
\displaystyle d_{\sigma}\left(g_\sigma \cdot h_\sigma, \, \alpha_{\sigma_0}^\sigma\left(g'\right) \cdot \alpha_{\sigma_0}^\sigma\left(h'\right)\right)
& \leqslant &
\displaystyle d_{\sigma}\left(g_\sigma \cdot h_\sigma, \, \alpha_{\sigma_0}^\sigma\left(g'\right) \cdot h_\sigma\right)
+
\displaystyle d_{\sigma}\left(\alpha_{\sigma_0}^\sigma\left(g'\right) \cdot h_\sigma, \, \alpha_{\sigma_0}^\sigma\left(g'\right) \cdot \alpha_{\sigma_0}^\sigma\left(h'\right)\right)\\
& = &
\displaystyle d_{\sigma}\left(g_\sigma, \, \alpha_{\sigma_0}^\sigma\left(g'\right)\right)
+
\displaystyle d_{\sigma}\left(h_\sigma, \, \alpha_{\sigma_0}^\sigma\left(h'\right)\right),
\end{array}
\]
what, after taking limit over \(\sigma \geqslant \sigma_0\), proves our property.

Let now \((H\), \(\{\beta_\sigma \colon G_\sigma \longrightarrow H\}_{\sigma \in \Sigma})\) be an inductive limit (that was found according to the above proof) of~a~direct system \((\{(G_\sigma, \, d_\sigma)\}_{\sigma \in \Sigma}\), \(\{\beta_{\sigma}^{\tau} \colon G_\sigma \longrightarrow G_\tau\}_{\sigma \leqslant \tau})\) satisfying assumptions of this theorem.
We~will~prove that there exists a non--expansive homomorphism from \(G\) onto \(H\).
Take a look at \(\{\beta_\sigma\}_{\sigma \geqslant \sigma_0}\) and \linebreak note that, similarly as in the proof of Lemma \ref{inverse limit - main}, there is a subnet \(\{\Gamma(\beta_{\eta_\lambda})\}_{\lambda \in \Lambda}\) of \(\{\Gamma(\beta_\sigma)\}_{\sigma \geqslant \sigma_0}\) \linebreak that~is~convergent in \(\mathfrak{K}(Y \times Y)\) to a graph of a non--expansive and surjective mapping \(\beta \colon G \longrightarrow H\).
To~see that \(\beta\) is a homomorphism, fix \((g\), \(\beta(g))\) and \((h\), \(\beta(h))\) in \(\Gamma(\beta)\), and for \(\lambda \in \Lambda\) find \(g_{\eta_\lambda}\), \(h_{\eta_\lambda}\) and \(g_{\eta_\lambda}'\), \(h_{\eta_\lambda}'\) in~\(G_{\eta_\lambda}\) such that \(g = \lim_{\lambda \in \Lambda} g_{\eta_\lambda}\), \(h = \lim_{\lambda \in \Lambda} h_{\eta_\lambda}\) and \(\beta(g) = \lim_{\lambda \in \Lambda} g_{\eta_\lambda}'\), \(\beta(h) = \lim_{\lambda \in \Lambda} h_{\eta_\lambda}'\).
We now use the proved in the above paragraph property of our inductive limits in case of \((G\), \(\{\alpha_\sigma \colon G_\sigma \longrightarrow G\}_{\sigma \in \Sigma})\) and \((H\), \(\{\beta_\sigma \colon G_\sigma \longrightarrow H\}_{\sigma \in \Sigma})\) to get, respectively, that \(g \cdot h = \lim_{\lambda \in \Lambda} g_{\eta_\lambda} \cdot h_{\eta_\lambda}\) and \(\beta(g) \cdot \beta(h) = \lim_{\lambda \in \Lambda} g_{\eta_\lambda}' \cdot h_{\eta_\lambda}'\).
But this means that the pair \((g \cdot h\), \(\beta(g) \cdot \beta(h))\), as the limit of \(\{(g_{\eta_\lambda} \cdot h_{\eta_\lambda}\), \(g_{\eta_\lambda}' \cdot h_{\eta_\lambda}')\}_{\lambda \in \Lambda}\), is an element of~\(\Gamma(\beta)\), so \(\beta\) is a homomorphism, what ends the proof of point (\ref{main_groups f1}) of the ``furthermore'' part of the proof.
The argument for (\ref{main_groups f2}) is the same as in the proof of Theorem \ref{main}.
\end{proof}

It might seem that the above reasoning would also work if we, in place of bi--invariant metrics, did consider left--invariant metrics on our compact metric groups. However, as the following example shows, this is not the case.

\begin{ex} \label{ex left-inv}
Consider the permutation group \(S_3\) of the set \(\{1\), \(2\), \(3\}\).
Set a mapping
\[
\varrho \colon S_3 \times S_3 \ni (\sigma, \,\tau) \longmapsto \varrho(\sigma, \, \tau) \in \mathbb{R}_{+},
\] where \(\varrho(\sigma\), \(\tau) = 0\), if \(\tau^{-1}\sigma \in \{\OPN{id}\), \((1 \, 2)\}\), and \(\varrho(\sigma\), \(\tau) = 0\) otherwise.
This mapping is clearly symmetric~and satisfies the triangle inequality.
Let \(\delta\) be the discrete metric on \(S_3\).
Define \(d_n \defeq \varrho + \frac{1}{n} \delta\).
Then the metric~\(d_n\) on \(S_3\) is left--invariant (and not bi--invariant), and \(d_{n + 1} \leqslant d_n\) for every \(n \in \mathbb{N}\).
Because of~that, the pair \((\{(S_3\), \(d_n)\}_{n \in \mathbb{N}}\), \(\{\OPN{id}_{S_3}\}_{n \in \mathbb{N}})\) is a direct system in the category of compact metric spaces with non--expansive homomorphisms as arrows, and its arrows are surjective.
If our approach from the above theorem would work in the case of groups with left--invariant metrics, what is easy to check, the object \((Z\), \(\zeta)\) of an inductive limit, as~the Gromov--Hausdorff limit of the sequence \((S_3\), \(d_n)\), would be the quotient of the pseudometric space \((S_3\), \(d)\) by \(\{d = 0\}\), where \(d\) is the pointwise limit of the sequence \(\{d_n\}_{n \in \mathbb{N}}\).
However, this cannot be~the~case, for there is no epimorphism from \(S_3\) onto~\(Z = \{\OPN{id}\),~\((1 \, 3)\),~\((2 \, 3)\}\).
(Surprisingly, what is interesting~and quite easy to check, the direct system we are considering does have an inductive limit with the trivial group as~its~object, see, e.g., the next section.)
\end{ex}

The above example shows, that if we want to show theorem on the existence of an inductive limit for~compact metric groups in general, we have to change or upgrade our approach.
Fortunately, we managed to~solve this problem, the results of which are presented in the next section.

\subsection{Inverse limit}
We now switch to the case of inverse systems of compact metric groups.
Similarly~as~before, we will first focus on compact metric groups with bi--invariant metrics.

\begin{thm} \label{inverse limit - main_bi_mg}
Let \((\{(G^\sigma\), \(d^\sigma)\}_{\sigma \in \Sigma}\), \(\{\alpha^{\sigma}_{\tau} \colon G^\tau\longrightarrow G^\sigma\}_{\sigma \leqslant \tau})\) be an inverse system in the category of compact metric groups with bi--invariant metrics with non--expansive homomorphisms as arrows, where \(\alpha^\sigma_\tau\) are~surjective for all \(\sigma \leqslant \tau\).
Assume that there exists an object in \(\mathsf{cmn}\) and surjective arrows from this object onto~\(G^\sigma\)
for \(\sigma \in \Sigma\) in \(\mathsf{cmn}\). Then this system has inverse limit. \\
Furthermore,
\begin{enumerate}
\item its object does~not depend, up to isometric isomorphism, on the choice of surjective arrows for~the~system, and \label{inverse limit - main_bi_mg f1}
\item its arrows are surjective. \label{inverse limit - main_bi_mg f2}
\end{enumerate}
\end{thm}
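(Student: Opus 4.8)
The plan is to run, essentially word for word, the space--level construction from the proof of Lemma~\ref{inverse limit - main}, and then to endow the limit object with a compatible group structure exactly as was done for the inductive case in the proof of Theorem~\ref{main_groups}. So first I would use the stated hypothesis --- which makes \(\{G^\sigma\}_{\sigma\in\Sigma}\) bounded above with respect to \(\preccurlyeq\) --- together with Gromov's Theorem~\ref{Gromov} to embed all the \(G^\sigma\) isometrically into one compact metric space \((Y,\varrho)\), and then apply the Tikhonov argument of Lemma~\ref{inverse limit - main}. After passing to a subnet \(\{\eta_\lambda\}_{\lambda\in\Lambda}\) (refined further so that the net \(\{e_{G^{\eta_\lambda}}\}_{\lambda\in\Lambda}\) of identities converges in \(Y\)), this produces a compact set \(G\subset Y\) equal to the Hausdorff limit of \(\{G^{\eta_\lambda}\}_{\lambda\in\Lambda}\), non--expansive surjections \(\alpha^\sigma\colon G\to G^\sigma\) with \(\alpha_\tau^\sigma\circ\alpha^\tau=\alpha^\sigma\), the universal property of Lemma~\ref{inverse limit - main} among non--expansive maps, and the separation property that \(\alpha^\sigma(x)=\alpha^\sigma(y)\) for all \(\sigma\) forces \(x=y\). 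From the convergence \(\Gamma(\alpha^\sigma_{\eta_\lambda})\to\Gamma(\alpha^\sigma)\) in \(\mathfrak{K}(Y\times Y)\) and compactness of \(Y\times Y\) I would also note that \(\alpha^\sigma_{\eta_\lambda}(g_{\eta_\lambda})\to\alpha^\sigma(g)\) for any net \(g_{\eta_\lambda}\in G^{\eta_\lambda}\) with \(g_{\eta_\lambda}\to g\).

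Next I would put a group structure on \(G\): for \(g,h\in G\) choose nets \(g_{\eta_\lambda},h_{\eta_\lambda}\in G^{\eta_\lambda}\) with \(g_{\eta_\lambda}\to g\), \(h_{\eta_\lambda}\to h\) (available since \(G\) is the Hausdorff limit of the \(G^{\eta_\lambda}\)) and set \(g\cdot h\defeq\lim_\lambda g_{\eta_\lambda}\cdot h_{\eta_\lambda}\). By the very computation used in the proof of Theorem~\ref{main_groups}, bi--invariance of each \(d^{\eta_\lambda}\) makes this net Cauchy (so convergent in the complete space \(Y\), with limit in \(G\)) and its limit independent of the chosen approximating nets. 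Setting \(e\defeq\lim_\lambda e_{G^{\eta_\lambda}}\) and \(g^{-1}\defeq\lim_\lambda g_{\eta_\lambda}^{-1}\), the group axioms and the bi--invariance of \(\varrho|_{G\times G}\) pass to the limit, so \(G\) is an object of our category; and one has by construction the approximation property that \(g_{\eta_\lambda}\to g\), \(h_{\eta_\lambda}\to h\) (all terms in \(G^{\eta_\lambda}\)) imply \(g_{\eta_\lambda}\cdot h_{\eta_\lambda}\to g\cdot h\). Combining this with the fact recorded above, with each \(\alpha^\sigma_{\eta_\lambda}\) being a homomorphism, and with joint continuity of multiplication in \(G^\sigma\), I obtain \(\alpha^\sigma(g\cdot h)=\alpha^\sigma(g)\cdot\alpha^\sigma(h)\), so every \(\alpha^\sigma\) is a homomorphism.

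It then remains to check the universal property and the ``furthermore'' part. Condition~(\ref{Inverse limit 1}) of Definition~\ref{Inverse limit} is already part of Lemma~\ref{inverse limit - main}; for~(\ref{Inverse limit 2}), given a compact metric group with bi--invariant metric \((Z,\zeta)\) and compatible homomorphisms \(\xi^\sigma\colon Z\to G^\sigma\), Lemma~\ref{inverse limit - main} yields the unique non--expansive \(\nu\colon Z\to G\) with \(\alpha^\sigma\circ\nu=\xi^\sigma\), and since \(\alpha^\sigma(\nu(z_1 z_2))=\xi^\sigma(z_1)\xi^\sigma(z_2)=\alpha^\sigma(\nu(z_1)\nu(z_2))\) for all \(\sigma\), the separation property forces \(\nu\) to be a homomorphism --- unique as such because unique as a non--expansive map. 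Surjectivity of the arrows of an arbitrary inverse limit~(\ref{inverse limit - main_bi_mg f2}) follows as in Lemma~\ref{inverse limit - main}. For the independence of the object up to isometric isomorphism~(\ref{inverse limit - main_bi_mg f1}), let \((H,\{\beta^\sigma\})\) be the limit produced by the above procedure from a second inverse system with the same objects but possibly different surjective arrows; passing to a convergent sub--subnet of \(\{\Gamma(\beta^{\eta_\lambda})\}_{\lambda\in\Lambda}\) in \(\mathfrak{K}(Y\times Y)\) gives, by non--expansiveness of the \(\beta^{\eta_\lambda}\), the graph of a non--expansive surjection \(\gamma\colon H\to G\) with \(\gamma(h)=\lim_\lambda\beta^{\eta_\lambda}(h)\), which is a homomorphism by the approximation property applied to the nets \(\beta^{\eta_\lambda}(h_i)\in G^{\eta_\lambda}\). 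Since \(G\) and \(H\) are isometric as metric spaces by Lemma~\ref{inverse limit - main}(\ref{inverse limit - main f1}), composing \(\gamma\) with an isometry \(G\to H\) produces a non--expansive surjection of the compact space \(H\) onto itself, which is automatically an isometry; hence \(\gamma\) is an isometry, and being a surjective homomorphism it is an isometric isomorphism.

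I expect the main obstacle to be the two points requiring genuinely new work beyond Lemma~\ref{inverse limit - main}: the well--definedness of \(\cdot\) on \(G\) together with its compatibility with the approximating nets --- the delicacy being that \(G\) exists a priori only as a Hausdorff limit inside the auxiliary space \((Y,\varrho)\), which carries no group structure of its own, so every group--theoretic assertion about \(G\) must be obtained by a limiting argument and checked to be independent of all choices made --- and, to a lesser extent, the construction and homomorphism property of the comparison map \(\gamma\) in the ``furthermore'' part.
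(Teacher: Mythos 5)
Your overall route is the paper's: run the Tikhonov graph--limit construction of Lemma~\ref{inverse limit - main} to obtain the object \(G\) and the surjective non--expansive arrows, then impose a group structure on \(G\) by taking limits of products formed in the approximating groups, verify that the \(\alpha^\sigma\), the mediating map \(\nu\), and the comparison map are homomorphisms, and deduce uniqueness up to isometric isomorphism from the \(\mathsf{cmn}\)--level uniqueness together with the fact that a non--expansive surjection between isometric compact metric spaces is an isometry. Most of the details you supply (the separation property, deducing that \(\nu\) is a homomorphism from \(\alpha^\sigma\circ\nu=\xi^\sigma\) for all \(\sigma\), the construction of \(\gamma\) as a Hausdorff limit of graphs) are correct and either match or cleanly streamline the paper's argument; your explicit final step that a non--expansive surjection \(H\to G\) between isometric compacta is an isometry is one the paper leaves implicit.

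There is, however, one genuine gap, and it sits exactly where you yourself located the main obstacle: the existence of the limit defining \(g\cdot h\). You assert that bi--invariance of the metrics \(d^{\eta_\lambda}\) makes the net \(\{g_{\eta_\lambda}\cdot h_{\eta_\lambda}\}_{\lambda\in\Lambda}\) Cauchy in \(Y\). It does not: the terms \(g_{\eta_\lambda}h_{\eta_\lambda}\) and \(g_{\eta_\kappa}h_{\eta_\kappa}\) lie in \emph{different} groups \(G^{\eta_\lambda}\) and \(G^{\eta_\kappa}\) sitting inside \(Y\), and bi--invariance of each individual \(d^{\eta_\lambda}\) gives no control on the \(\varrho\)--distance between points of distinct members of the net. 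The triangle--inequality computation you cite from the proof of Theorem~\ref{main_groups} compares two products inside one and the same group; it shows that the limit, \emph{if it exists}, is independent of the chosen approximating nets --- not that the net converges (bi--invariance alone cannot give convergence: embedding the same two--point group into a two--point \(Y\) in the two possible ways alternately makes products of constant nets oscillate). The paper closes this hole by a further application of Tikhonov's theorem, passing to a subnet of \(\Lambda\) along which \(\{\alpha^{\eta_\lambda}(g)\cdot\alpha^{\eta_\lambda}(h)\}_{\lambda}\) and \(\{\alpha^{\eta_\lambda}(g)^{-1}\}_{\lambda}\) converge for all \(g\), \(h\in G\). Alternatively, you can avoid the extra subnet: the canonical products \(m_\lambda=\alpha^{\eta_\lambda}(g)\cdot\alpha^{\eta_\lambda}(h)\) are coherent, \(\alpha^{\eta_\kappa}_{\eta_\lambda}(m_\lambda)=m_\kappa\) for \(\eta_\kappa\leqslant\eta_\lambda\), so by the graph convergence every cluster point \(p\) of \(\{m_\lambda\}_{\lambda}\) satisfies \(\alpha^{\eta_\kappa}(p)=m_\kappa\) for all \(\kappa\); the separation property and the compactness of \(Y\) then force convergence, and only at that point does your bi--invariance estimate extend the conclusion to arbitrary approximating nets. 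The same repair is needed for \(g^{-1}\) (your refinement of the subnet covers only the identities). With this convergence step fixed, the rest of your argument goes through.
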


\begin{proof}
The proof will be held in a similar manner to the proof of Lemma \ref{inverse limit - main}.
We begin by embedding isometrically all the elements of the family \(\{G^{\sigma}\}_{\sigma \in \Sigma}\) in some compact metric space \((Y\), \(\varrho)\).
We consider \( \gamma_\tau = (G^\tau\),~\(\{\Gamma(\beta_{\tau}^{\sigma})\}_{\sigma \in \Sigma}) \in \mathfrak{K}(Y) \times \prod_{\sigma \in \Sigma} \mathfrak{K}(Y \times Y) \) for \(\tau \in \Sigma\), where
\(\mathfrak{K}(Y)\) and \(\mathfrak{K}(Y \times Y)\) are~the~hyperspaces of, respectively, \(Y\) and \(Y \times Y\) (considered with the sum metric \(\varrho_1\)), \(\Gamma(\beta_{\tau}^{\sigma})\) are~the~graphs of~mappings \(\beta_{\tau}^{\sigma}\) for~\(\sigma\), \(\tau \in \Sigma\), while~\(\beta_{\tau}^{\sigma} = \alpha_{\tau}^{\sigma}\) if \(\sigma \leqslant \tau\) and \(\beta_{\tau}^{\sigma} = \OPN{id}_{G^\tau}\) otherwise.
Finally, we use the~Tikhonov's theorem to~find a convergent subnet \( \{\gamma_{\eta_{\lambda}}\}_{\lambda \in \Lambda} \) of~\( \{\gamma_\tau\}_{\tau \in \Sigma}\) in \(\mathfrak{K}(Y) \times \prod_{\sigma \in \Sigma} \mathfrak{K}(Y \times Y)\).
Limit of this net consists of an object \(G \in \mathsf{cmn}\) together with~the~set of graphs of non--expansive (and surjective) arrows \(\{\alpha^\sigma \colon G \longrightarrow G^\sigma\}_{\sigma \in \Sigma}\) in~\(\mathsf{cmn}\) (we remember that these form an inverse limit of our system provided that everything happens in~\(\mathsf{cmn}\)). \linebreak
(We also remember the important property \eqref{eq 4} of these arrows.)
To finish the proof of the main part of this theorem, one need to show, in particular, that there is a group structure on~\(G\) such that the metric \(\varrho|_{G \times G}\) on \(G\) is bi--invariant, and the mappings \(\{\alpha^\sigma\}_{\sigma \in \Sigma}\) are homomorphisms.

We will now set a group structure on \(G\) by defining a binary operation on it.
Before this, thanks to the~Tikhonov's theorem, without loss of generality we assume that for every \(g\), \(h \in G\) sequences \(\{\alpha^{\eta_\lambda}(g) \cdot \alpha^{\eta_\lambda}(h)\}_{\lambda \in \Lambda}\) (for the convenience, we will use the notation of \(\cdot\) in the case of group operations for~all of~the~groups) and \(\{\alpha^{\eta_\lambda}(g)^{-1}\}_{\lambda \in \Lambda}\) are convergent.
We then define \(\cdot \colon G \times G \longrightarrow G\) in the~following~way:
\[
\displaystyle g \cdot h \defeq \lim_{\lambda \, \in \, \Lambda} \alpha^{\eta_\lambda}(g) \cdot \alpha^{\eta_\lambda}(h).
\]
To~see that for every \(g\), \(h \in G\) we have \(\alpha^\sigma(g) \cdot \alpha^\sigma(h) = \alpha^\sigma(g \cdot h)\), it is enough to write that
\[
\displaystyle \alpha^\sigma(g) \cdot \alpha^\sigma(h)
=
\displaystyle \alpha^\sigma_{\eta_\lambda}(\alpha^{\eta_\lambda}(g)) \cdot \alpha^\sigma_{\eta_\lambda}(\alpha^{\eta_\lambda}(h))
=
\displaystyle \alpha^\sigma_{\eta_\lambda}(\alpha^{\eta_\lambda}(g) \cdot \alpha^{\eta_\lambda}(h))
\]
for sufficiently large \(\lambda \in \Lambda\), and to look for a while at the above definition of our binary operation and property~\eqref{eq 4} of~\(\alpha^\sigma\).
Checking that this operation sets the group structure on \(G\) is a straightforward computation and is left as~an~easy exercise.
Of course, \(\varrho|_{G \times G}\) is bi--invariant, because every metric  \(d^{\eta_\lambda}\) for \(\lambda \in \Lambda\) is bi--invariant.

We will now finish the proof of the fact that the pair \((G\), \(\{\alpha^\sigma \colon G \longrightarrow G^\sigma\}_{\sigma \in \Sigma})\) is indeed an inverse limit of~our~system.
Assume then that there is an object \(H\) and a family of arrows \(\{\xi^\sigma \colon H \longrightarrow G^\sigma \}_{\sigma \in \Sigma}\) such~that \(\alpha_{\tau}^{\sigma} \circ \xi^{\tau} = \xi^{\sigma}\) for~\(\sigma \leqslant \tau\) in \(\Sigma\).
Without loss of generality we assume that \(\{\alpha^{\eta_\lambda}\}_{\lambda \in \Lambda}\) and \(\{\xi^{\eta_\lambda}\}_{\lambda \in \Lambda}\) are uniformly convergent, while the first one, what is already well known to us, converges to \(\OPN{id}_G\).
We set \(\nu = \lim_{\lambda \in \Lambda} \xi^{\eta_\lambda}\).
It remains to show that \(\nu \colon H \longrightarrow G\) is a homomorphism (we remember that \(\alpha^\sigma \circ \nu = \xi^\sigma\) for \(\sigma \in \Sigma\)).
But, by the definition of our binary operation, this~is the case, because for every \(\lambda \in \Lambda\) the mappings \(\alpha^{\eta_\lambda}\) and \(\xi^{\eta_\lambda}\) satisfy dependence \(\alpha^{\eta_\lambda} \circ \nu = \xi^{\eta_\lambda}\), both are~homomorphisms, and, finally, we have the aforementioned convergence of \(\{\alpha^{\eta_\lambda}\}_{\lambda \in \Lambda}\).

Point (\ref{inverse limit - main_bi_mg f1}) of the ``furthermore'' part is carried in the similar manner as the analogous part of the proof of~Theorem~\ref{main_groups}.
We will use the same property as before, this time, of our inverse limit, that is the fact that~in~the~situation as~in~the above proof, if for every \(\lambda \in \Lambda\) we have \(g^{\eta_\lambda}\), \(h^{\eta_\lambda} \in G^{\eta_\lambda}\) such that \(g^{\eta_\lambda} \overset{_{\lambda \in \Lambda}}{\longrightarrow} g\)~and \(h^{\eta_\lambda} \overset{_{\lambda \in \Lambda}}{\longrightarrow} h\) (here~both \(g\) and \(h\) are of course in \(G\)), then \(g^{\eta_\lambda} \cdot h^{\eta_\lambda} \overset{_{\lambda \in \Lambda}}{\longrightarrow} g \cdot h \in G\).
Note that, because \(\{\alpha^{\eta_\lambda}\}_{\lambda \in \Lambda}\) converges uniformly to~\(\OPN{id}_G\), we have \(g = \lim_{\lambda \in \Lambda} \alpha^{\eta_\lambda}(g)\) and \(h = \lim_{\lambda \in \Lambda} \alpha^{\eta_\lambda}(h)\).
Using this fact, definition of~our~binary operation and the fact that the metrics on our groups are bi--invariant, we can write
\[
\begin{array}{lll}
\displaystyle d^{\sigma}\left(g^{\eta_\lambda} \cdot h^{\eta_\lambda}, \, \alpha^{\eta_\lambda}(g) \cdot \alpha^{\eta_\lambda}(h)\right)
& \leqslant &
\displaystyle d^{\sigma}\left(g^{\eta_\lambda} \cdot h^{\eta_\lambda}, \, \alpha^{\eta_\lambda}(g) \cdot h^{\eta_\lambda}\right)
+
\displaystyle d^{\sigma}\left(\alpha^{\eta_\lambda}(g) \cdot h^{\eta_\lambda}, \, \alpha^{\eta_\lambda}(g) \cdot \alpha^{\eta_\lambda}(h)\right)\\
& = &
\displaystyle d^{\sigma}\left(g^{\eta_\lambda}, \, \alpha^{\eta_\lambda}(g)\right)
+
\displaystyle d^{\sigma}\left(h^{\eta_\lambda}, \, \alpha^{\eta_\lambda}(h)\right),
\end{array}
\]
what proves our property.

Let now \((H\), \(\{\beta^\sigma \colon H \longrightarrow G^\sigma\}_{\sigma \in \Sigma})\) be an inverse limit, constructed as in the main part of this proof, of~an~inverse system \((\{(G^\sigma, \, d^\sigma)\}_{\sigma \in \Sigma}\), \(\{\beta^{\sigma}_{\tau} \colon G^\tau \longrightarrow G^\sigma\}_{\sigma \leqslant \tau})\) satisfying assumptions of this theorem. \linebreak
We will find a non--expansive homomorphism from \(G\) onto \(H\).
To achieve that, note that, without loss of~generality, we can assume that the net \(\{\alpha^{\eta_\lambda}\}_{\lambda \in \Lambda}\) is uniformly convergent to a non--expansive surjection \(\alpha \colon G \longrightarrow H\).
What is left to show is that \(\alpha\) is a homomorphism.
Let then \(g\), \(h \in G\) and note, that, as~\(\alpha(g) = \lim_{\lambda \in \Lambda}\alpha^{\eta_\lambda}(g)\) and \(\alpha(h) = \lim_{\lambda \in \Lambda}\alpha^{\eta_\lambda}(h)\), the proven in the previous paragraph property of~the~limit \((H\), \(\{\beta^\sigma \colon H \longrightarrow G^\sigma\}_{\sigma \in \Sigma})\) clearly shows that \(\alpha(g) \cdot \alpha(h) = \lim_{\lambda \in \Lambda} \alpha^{\eta_\lambda}(g) \cdot \alpha^{\eta_\lambda}(h)\), i.e., that \(\alpha\) is~a~homomorphism, what ends the proof, as the proof of (\ref{inverse limit - main_bi_mg f2}) is the same as in Lemma \ref{inverse limit - main}.
\end{proof}

In the previous section, in case of theorem on the existence of an inverse limit for \(\mathsf{cmn}\), we presented an~equivalent condition for the existence of a limit.
Namely, an inverse limit existed exactly, when the set of~objects of a given system was bounded from above with respect to \(\preccurlyeq\).
However, in case of the category we~are~considering now -- this is no longer the case.
This time, as we have seen in Theorem \ref{inverse limit - main_bi_mg}, this condition ensures existence of an inverse limit, but is no longer a necessary condition for that.
We present below a~few examples, when this condition is not fulfilled, yet for some of them inverse limit exists, and for another it~does not exist.

\begin{ex} \label{ex inv_b_g 1}
Consider the circle group \(\mathbb{T}\) equipped with the Euclidian metric \(d_e\).
Define for \(n \in \mathbb{N}\) the~group \(\mathbb{T}_n = (\mathbb{T}\), \(n \cdot d_e)\).
Clearly, the family of objects \(\{\mathbb{T}_n\}_{n \in \mathbb{N}}\) is not bounded from above with respect to \(\preccurlyeq\). \linebreak
Let the mapping \(\alpha_n^m \colon \mathbb{T}_n \ni z \longmapsto z \in \mathbb{T}_m\) be given for \(m \leqslant n\) in \(\mathbb{N}\).
Then the pair \((\{\mathbb{T}_n\}_{n \in \mathbb{N}}\), \(\{\alpha_n^m\}_{m \leqslant n})\) fulfills all, but aforementioned one, of the assumption of Theorem \ref{inverse limit - main_bi_mg}.
Note that for every compact metric group \(G\) with bi--invariant metric there exists exaclty one set of arrows \(\{\alpha^n \colon G \longrightarrow \mathbb{T}_n\}_{n \in \mathbb{N}}\) that is compatible with the set of arrows of our system, i.e., \(\alpha^n\) must be a trivial homomorphism for every \(n\).
Thanks to~this~fact, it is easy to see that the given inverse system has an inverse limit with trivial group as its object.
What is more, the arrows of this limit are not surjective.

This example shows that the idea of the surjectivity of arrows of an inverse system presented in Lemma~\(\ref{inverse limit - main}\) cannot, in general, be transferred to a wider context.
\end{ex}

\begin{ex} \label{ex inv_b_g 2}
Let \(\mathbb{Z}_n\) for \(n \in \mathbb{N}\) be the additive group of integers modulo \(n\) considered with discrete metric.
Consider inverse system \((\{\mathbb{Z}_{2^n}\}_{n \in \mathbb{N}}\), \(\{\alpha_n^m \colon \mathbb{Z}_{2^n} \longrightarrow \mathbb{Z}_{2^m}\}_{m \leqslant n})\), where \(\alpha_{n + 1}^n\) for \(n \in \mathbb{N}\) and \(k \in \mathbb{Z}_n\) is defined in the following way:
\[
\displaystyle \alpha_{n + 1}^n(k)
\defeq
\left\{
\begin{array}{ll}
\frac{k}{2}, & \textnormal{ if } k \textnormal{ is even},\\
\displaystyle k, & \textnormal{ if } k \textnormal{ is odd},
\end{array}\right. 
\]
and the rest of arrows is defined in an obvious way.
We will justify, similarly as in the relevant part of~the~proof of Lemma \ref{inverse limit - main}, that this system has no inverse limit.
Fix an object \(G\) and a set of arrows \(\{\alpha^n \colon G \longrightarrow \mathbb{Z}_n\}_{n \in \mathbb{N}}\) in our category such that the condition (\ref{Inverse limit 1}) from Definition~\ref{Inverse limit} holds.
Of course, there is no upper bound with respect to \(\preccurlyeq\) of objects of our system, so there is \(n_0 \in \mathbb{N}\) and \(k_0 \in \mathbb{Z}_{2^{n_0}}\) for~which~\(k_0 \notin \alpha^{n_0}(G)\). \linebreak
Let \(H\) be a cyclic subgroup \(\langle k_0 \rangle\) of \(\mathbb{Z}_{2^{n_0}}\).
Define \(\xi^{n_0} \colon H \ni k \longrightarrow k \in \mathbb{Z}_{2^{n_0}}\) and note that this arrow determines for every \(n \neq n_0\) exactly one arrow \(\xi^n \colon H \longrightarrow \mathbb{Z}_{2^n}\) such that \(\{\xi^n\}_{n \in \mathbb{N}}\) coincides with the set of arrows of~the~given system.
But, finally, with this configuration there is no \(\nu \colon H \longrightarrow G\) such that \(\alpha^{n_0}(\nu(k_0)) = \xi^{n_0}(k_0)\), so our system has no inverse limit.
\end{ex}

Using the above examples and some easy trick, we will produce for every inverse system (indexed by \(\mathbb{N}\)) satisfying assumptions of Theorem \ref{inverse limit - main_bi_mg} two unbounded (with respect to \(\preccurlyeq\)) inverse systems -- one with, and one without an inverse limit, while this existing one will have the same object as the object of the limit of~original system.

\begin{ex} \label{ex inv_b_g 3}
Let \((\{(G^n\), \(d^n)\}_{n \in \mathbb{N}}\), \(\{\gamma^{m}_{n} \colon G^n \longrightarrow G^m\}_{m \leqslant n})\) be an inverse system satisfying assumptions of~Theorem \ref{inverse limit - main_bi_mg} and \((G\),~\(\{\gamma^n \colon G \longrightarrow G^n\}_{n \in \mathbb{N}})\) be its inverse limit.
Let \((\{\mathbb{T}_{n}\}_{n \in \mathbb{N}}\), \(\{\alpha_n^m \colon \mathbb{T}_{n} \longrightarrow \mathbb{T}_{m}\}_{m \leqslant n})\) be~an~inverse system as in Example~\ref{ex inv_b_g 1}.
For every \(n \in \mathbb{N}\) consider \(H^n = \mathbb{T}_{2^n} \times G^n\) with sum metric.
The~set \(\{H^n\}_{n \in \mathbb{N}}\) is not bounded from~above with respect to \(\preccurlyeq\).
Let \(\beta_n^m \colon H^n \ni (z\), \(g) \longmapsto (\alpha_n^m(z)\),~\(\gamma_n^m(g)) \in H^m\) for all \(m \leqslant n\) in \(\mathbb{N}\).
In this situation, the pair \((\{H^n\}_{n \in \mathbb{N}}\), \(\{\beta^{m}_{n} \colon H^n \longrightarrow H^m\}_{m \leqslant n})\) is an inverse system (with surjective arrows) in the category under consideration, and its objects are not bounded from above with respect to \(\preccurlyeq\).
However, it is straightforward to check that the pair \((G\), \(\{\beta^n \colon G \longrightarrow H^n\}_{n \in \mathbb{N}})\), where \(\beta^n \colon G \ni g \longmapsto (\alpha^n(z)\), \(\gamma^n(g)) \in H^n\) for every \(n \in \mathbb{N}\), is its inverse limit.
\end{ex}

\begin{ex}
Similarly as in the previous example, let \((\{(G^n\), \(d^n)\}_{n \in \mathbb{N}}\), \(\{\gamma^{m}_{n} \colon G^n \longrightarrow G^m\}_{m \leqslant n})\) be~an~inverse system satisfying assumptions of~Theorem \ref{inverse limit - main_bi_mg} with its inverse limit \((G\),~\(\{\gamma^n \colon G \longrightarrow G^n\}_{n \in \mathbb{N}})\), and let
\((\{\mathbb{Z}_{2^n}\}_{n \in \mathbb{N}}\), \(\{\alpha_n^m \colon \mathbb{Z}_{2^n} \longrightarrow \mathbb{Z}_{2^m}\}_{m \leqslant n})\) be an inverse system as in Example \ref{ex inv_b_g 2}.
One can show, similarly as in the aforementioned example, that the inverse system \((\{H^n\}_{n \in \mathbb{N}}\), \(\{\beta^{m}_{n} \colon H^n \longrightarrow H^m\}_{m \leqslant n})\), where \(H^n = \mathbb{Z}_{2^{n}} \times G^n\) for \(n \in \mathbb{N}\) is considered with sum metric and \(\beta_n^m \colon H^n \ni (z\), \(g) \longmapsto (\alpha_n^m(z)\),~\(\gamma_n^m(g)) \in H^m\) for all \(m \leqslant n\) in \(\mathbb{N}\), has no inverse limit.
\end{ex}

\begin{note}
It is worth noting that if we additionally required from the category considered in this section that~its~objects are abelian groups, then the proofs of Theorems \ref{main_groups} and \ref{inverse limit - main_bi_mg} would work in this case as well.
\end{note}

\section{Compact metric groups} \label{Section 4}
Finally, we come to proving our theorems for compact metric groups in general.
In case of both theorems, we will use the following notion.
If \(G\) is a compact topological group and \(d\) is a left--invariant pseudometric on \(G\) that induces its topology, we define
\[
\displaystyle \widehat{d}(g_1, \, g_2) \defeq \sup_{h \, \in \, G} d(g_1 \cdot h, \, g_2 \cdot h)
\]
for \(g_1\), \(g_2 \in G\).
It is straightforward to check that \(\widehat{d}\) is the smallest bi--invariant pseudometric that is not smaller than (and equivalent to) \(d\).
In particular, \(\widehat{G} = (G\), \(\widehat{d})\) is a compact metric group with bi--invariant metric.

Our proofs will rely on the following fact regarding this notion.

\begin{lem} \label{hat lemma}
Let \(\varphi \colon G \longrightarrow H\) be a non--expansive and surjective mapping between compact metric groups \((G\), \(d_G)\) and \((H\), \(d_H)\).
Then \(\widehat{\varphi} \colon \widehat{G} \ni g \longmapsto \varphi(g) \in \widehat{H}\) is a non--expansive surjection.
\end{lem}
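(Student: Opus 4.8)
The plan is to establish the two parts of the statement in turn, the first being immediate and the second carrying the content. Surjectivity of $\widehat{\varphi}$ is trivial: the underlying set map of $\widehat{\varphi}$ is literally $\varphi$, only the metrics on the domain and codomain having been replaced by $\widehat{d_G}$ and $\widehat{d_H}$, and $\varphi$ is surjective by hypothesis. By the paragraph preceding the lemma, moreover, $\widehat{d_G}$ and $\widehat{d_H}$ are genuine bi--invariant metrics (finite since $G$ and $H$ are compact), so nothing needs to be checked there. Hence the whole task is to prove that $\widehat{\varphi}$ does not increase distances, namely that $\widehat{d_H}(\varphi(g_1), \varphi(g_2)) \leqslant \widehat{d_G}(g_1, g_2)$ for all $g_1$, $g_2 \in G$.

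So fix $g_1$, $g_2 \in G$. Unwinding the definition of $\widehat{d_H}$, it is enough to show $d_H(\varphi(g_1) h, \varphi(g_2) h) \leqslant \widehat{d_G}(g_1, g_2)$ for every $h \in H$. Fix such an $h$ and, using the surjectivity of $\varphi$, pick $t \in G$ with $\varphi(t) = h$. Since $\varphi$ is a homomorphism, $\varphi(g_j) h = \varphi(g_j)\varphi(t) = \varphi(g_j t)$ for $j = 1$, $2$, whence, by the non--expansiveness of $\varphi$ as a map from $(G, d_G)$ to $(H, d_H)$,
\[
d_H(\varphi(g_1) h, \varphi(g_2) h) = d_H(\varphi(g_1 t), \varphi(g_2 t)) \leqslant d_G(g_1 t, g_2 t) \leqslant \widehat{d_G}(g_1, g_2),
\]
the last inequality holding since $\widehat{d_G}(g_1, g_2)$ is by definition the supremum of $d_G(g_1 s, g_2 s)$ over $s \in G$. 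Taking the supremum over $h \in H$ on the left--hand side gives $\widehat{d_H}(\varphi(g_1), \varphi(g_2)) \leqslant \widehat{d_G}(g_1, g_2)$, and the proof is complete.

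There is no genuinely hard step here (this is a bookkeeping lemma), but the point that deserves attention is the passage from the supremum over $h \in H$ defining $\widehat{d_H}$ to the supremum over $s \in G$ defining $\widehat{d_G}$. It works precisely because $\varphi$ is at once surjective, so that each right translate $\varphi(g_j) h$ in $H$ equals $\varphi(g_j t)$ for the right translate $g_j t$ in $G$, and a homomorphism, so that the two right--translation actions are intertwined by $\varphi$ through a single common element $t$. Were $\varphi$ only a non--expansive surjection not respecting the group law, an arbitrary preimage of $h$ would give no control over $d_G(g_1 t, g_2 t)$ beyond $\OPN{diam}_{d_G} G$, and the argument would break down.
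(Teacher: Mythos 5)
Your proof is correct and is essentially the paper's argument written out at the level of elements: the paper phrases it as monotonicity of the hat operation together with the identity \(\widehat{d_H \circ (\varphi \times \varphi)} = \widehat{d_H} \circ (\varphi \times \varphi)\), and that identity is exactly the matching of the supremum over \(h \in H\) with the supremum over \(t \in G\) via surjectivity and the homomorphism property that you carry out explicitly. Your closing remark correctly isolates why the homomorphism hypothesis (implicit in the statement, as the lemma is applied to arrows of the category of compact metric groups) is indispensable.
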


\begin{proof}
Since \(\varphi\) is non--expansive, we have \(d_H \circ (\varphi \times \varphi)\) is a left--invariant pseudometric on \(G\) that is not greater than~\(d_G\).
But \(d_G \leqslant \widehat{d_G}\), so \(\widehat{d_H \circ (\varphi \times \varphi)} \leqslant \widehat{d_G}\).
However, because \(\varphi\) is a surjective homomorphism, one can easly see that \(\widehat{d_H \circ (\varphi \times \varphi)} = \widehat{d_H} \circ (\varphi \times \varphi)\), which means that \(\widehat{\varphi}\) is non--expansive.
\end{proof}

\subsection*{Inductive limit}
We now move to proove the theorem on the existence of an inductive limit in case of~a~direct system of compact metric groups.

\begin{thm} \label{main_inductive_compact_metric_groups}
Let \((\{(G_\sigma\), \(d_\sigma)\}_{\sigma \in \Sigma}\), \(\{\alpha_{\sigma}^{\tau} \colon G_\sigma \longrightarrow G_\tau\}_{\sigma \leqslant \tau})\) be a direct system in the category of compact metric groups with non--expansive homomorphisms as arrows, where \(\alpha_\sigma^\tau\) are~surjective for all \(\sigma \leqslant \tau\). \linebreak
Then this system has inductive limit.\\
Furthermore,
\begin{enumerate}
\item its object does not depend, up~to~isometric isomorphism, on the choice of surjective arrows for~the~system, and \label{main_inductive_compact_metric_groups f1}
\item its arrows are surjective. \label{main_inductive_compact_metric_groups f3}
\end{enumerate}
\end{thm}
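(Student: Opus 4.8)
The plan is to obtain the object of the limit as a quotient \emph{group} of a fixed $G_{\sigma_0}$, rather than as the Gromov--Hausdorff limit of the underlying spaces; Example~\ref{ex left-inv} shows this is necessary, since the ``degeneracy subgroup'' of the limiting left-invariant pseudometric on $G_{\sigma_0}$ need not be normal. First I would fix $\sigma_0\in\Sigma$ and run the proof of Theorem~\ref{main} on the underlying direct system in $\mathsf{cmn}$: embed $\{G_\sigma\}_{\sigma\geqslant\sigma_0}$ isometrically in a compact metric space $(Y,\varrho)$, pass to a subnet so that $\{G_\sigma\}_{\sigma\geqslant\sigma_0}$ converges in the hyperspace of $Y$ to a compact set $X$ and $\{\alpha_{\sigma_0}^\sigma\colon G_{\sigma_0}\longrightarrow Y\}$ converges uniformly (Arzel\`a--Ascoli) to a non-expansive surjection $\pi\colon G_{\sigma_0}\longrightarrow X$. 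The point I would extract is that $\bar\rho_0:=\varrho\circ(\pi\times\pi)$ is a \emph{continuous, left-invariant pseudometric} on $G_{\sigma_0}$, bounded above by $d_{\sigma_0}$, and equal to $\bar\rho_0(g,h)=\inf_{\tau\geqslant\sigma_0}d_\tau\big(\alpha_{\sigma_0}^\tau(g),\alpha_{\sigma_0}^\tau(h)\big)$; this infimum is a monotone (decreasing) limit, so it does not depend on the chosen subnet, and left-invariance is inherited because each $\alpha_{\sigma_0}^\tau$ is a homomorphism.

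Now put $Z_0:=\{g\in G_{\sigma_0}\colon \bar\rho_0(g,e)=0\}$ -- a closed subgroup containing $\ker\alpha_{\sigma_0}^\tau$ for every $\tau\geqslant\sigma_0$ -- and let $N_0$ be the \emph{closed normal subgroup of $G_{\sigma_0}$ generated by $Z_0$}. Define the candidate object $G:=G_{\sigma_0}/N_0$, equipped with the quotient pseudometric $d_G(gN_0,hN_0):=\inf\{\bar\rho_0(g',h')\colon g'N_0=gN_0,\ h'N_0=hN_0\}$. Using continuity of $\bar\rho_0$ and compactness, I would check that $d_G$ is an honest metric: if $\OPN{dist}_{\bar\rho_0}(g,N_0)=0$, extract a subnet of a minimising net in $N_0$ converging to some $n_*\in N_0$, so $\bar\rho_0(g,n_*)=0$ by continuity, hence $n_*^{-1}g\in Z_0$ and $g\in N_0$. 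Since $d_G$ is continuous for the (compact, Hausdorff) quotient topology of the topological group $G_{\sigma_0}/N_0$, it induces that topology, so $G$ is a compact metric group with left-invariant metric $d_G$. The arrows are then forced: for $\sigma\geqslant\sigma_0$ take $\alpha_\sigma$ to be the composition $G_\sigma\cong G_{\sigma_0}/\ker\alpha_{\sigma_0}^\sigma\twoheadrightarrow G_{\sigma_0}/N_0$ (legitimate because $\ker\alpha_{\sigma_0}^\sigma\subseteq Z_0\subseteq N_0$), and for $\sigma\not\geqslant\sigma_0$ put $\alpha_\sigma:=\alpha_{\sigma'}\circ\alpha_\sigma^{\sigma'}$ for any $\sigma'\geqslant\sigma,\sigma_0$; that these are well-defined, non-expansive, surjective homomorphisms satisfying $\alpha_\tau\circ\alpha_\sigma^\tau=\alpha_\sigma$ is verified verbatim as in Theorem~\ref{main}.

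For the universal property (point (2) of Definition~\ref{Inductive limit}), suppose $(Z,\zeta)$ is a compact metric group and $\{\xi_\sigma\colon G_\sigma\longrightarrow Z\}$ a compatible cone of non-expansive homomorphisms. The key inequality is that for all $\tau\geqslant\sigma_0$ and $g,h\in G_{\sigma_0}$ one has $\zeta\big(\xi_{\sigma_0}(g),\xi_{\sigma_0}(h)\big)=\zeta\big(\xi_\tau(\alpha_{\sigma_0}^\tau g),\xi_\tau(\alpha_{\sigma_0}^\tau h)\big)\leqslant d_\tau\big(\alpha_{\sigma_0}^\tau g,\alpha_{\sigma_0}^\tau h\big)$, so $\zeta\circ(\xi_{\sigma_0}\times\xi_{\sigma_0})\leqslant\bar\rho_0$; in particular $Z_0\subseteq\ker\xi_{\sigma_0}$, and since $\ker\xi_{\sigma_0}$ is a closed normal subgroup, $N_0\subseteq\ker\xi_{\sigma_0}$. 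Hence $\xi_{\sigma_0}$ factors as $\xi_{\sigma_0}=\nu\circ\alpha_{\sigma_0}$ for a (unique) homomorphism $\nu\colon G\longrightarrow Z$, which is non-expansive by the same inequality; and $\nu\circ\alpha_\sigma=\xi_\sigma$ for every $\sigma$, together with uniqueness of $\nu$, follows from surjectivity of $\alpha_{\sigma_0}$ exactly as in Theorem~\ref{main}. This proves existence, and in particular that the arrows of the limit we built are surjective.

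It remains to address the ``furthermore'' part. Surjectivity of the arrows of \emph{any} realisation of the limit follows, as usual, by composing the $\alpha_\sigma$ with the canonical isomorphism. The delicate point -- which I expect to be the main obstacle -- is independence of the object, up to isometric isomorphism, from the choice of surjective arrows: here $\{G_\sigma\}$ need \emph{not} converge in $\mathfrak{GH}$ to $G$ (in Example~\ref{ex left-inv} it converges to a three-point space while $G$ is trivial), so the clean argument of Theorem~\ref{main} is unavailable. I would instead proceed as in the ``furthermore'' part of Theorem~\ref{main_groups}: given two arrow-systems $\{\alpha_\sigma^\tau\}$ and $\{\beta_\sigma^\tau\}$ on the same objects, Lemma~\ref{hat lemma} turns them into two direct systems of compact metric groups with bi-invariant metrics having the \emph{same} objects $\{\widehat{G_\sigma}\}$, so by Theorem~\ref{main_groups}(\ref{main_groups f1}) their bi-invariant inductive limits are isometrically isomorphic; running in addition the hyperspace-of-graphs convergence argument of Lemma~\ref{inverse limit - main} on the arrow families $\{\alpha_\sigma\}$ and $\{\beta_\sigma\}$ inside $Y\times Y$ produces a non-expansive surjective homomorphism between the two left-invariant limits in each direction, and such a pair composes to isometries, giving the desired isometric isomorphism. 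The one thing to be careful about throughout is how the passage to normal closures interacts with this convergence of graphs.
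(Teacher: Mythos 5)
Your existence argument is correct, but it takes a genuinely different route from the paper's. The paper first passes to the system \((\{\widehat{G_\sigma}\},\{\widehat{\alpha_\sigma^\tau}\})\) of groups with bi--invariant metrics, invokes Theorem \ref{main_groups} to obtain a limit \((H,d_H)\), and only then forms the quotient \(H/N\) by the closed normal subgroup generated by the degeneracy set of an auxiliary left--invariant pseudometric \(d\) on \(H\). You instead quotient \(G_{\sigma_0}\) directly by the normal closure \(N_0\) of \(Z_0=\{g\colon\bar\rho_0(g,e)=0\}\), where \(\bar\rho_0(g,h)=\inf_{\tau\geqslant\sigma_0}d_\tau(\alpha_{\sigma_0}^\tau(g),\alpha_{\sigma_0}^\tau(h))\) is a continuous left--invariant pseudometric (an infimum of a decreasing net of continuous pseudometrics all dominated by \(d_{\sigma_0}\)). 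This is more economical: the universal property reduces to the single inequality \(\zeta\circ(\xi_{\sigma_0}\times\xi_{\sigma_0})\leqslant\bar\rho_0\), and in fact the Gromov/Arzel\`a--Ascoli apparatus and Theorem \ref{main_groups} become dispensable for existence, since \(\bar\rho_0\) can be taken as defined by that infimum. Your verifications (normality of \(N_0\) rescuing the triangle inequality for \(d_G\); \(Z_0\subseteq\ker\xi_{\sigma_0}\) with \(\ker\xi_{\sigma_0}\) closed normal forcing \(N_0\subseteq\ker\xi_{\sigma_0}\); surjectivity of \(\alpha_{\sigma_0}\) forcing uniqueness of \(\nu\)) are sound, and your construction returns the trivial group in Example \ref{ex left-inv}, as it must.

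Part (\ref{main_inductive_compact_metric_groups f1}) of the ``furthermore'' statement, however, has a real gap, and it is exactly where you suspected. The hyperspace--of--graphs argument you invoke yields, from a convergent subnet of \(\{\Gamma(\beta_\sigma)\}\), the graph of a non--expansive map whose domain is \(p_1(\lim\Gamma(\beta_{\eta_\lambda}))=\lim G_{\eta_\lambda}\), i.e.\ the Hausdorff limit \(X\) of the underlying spaces --- the \(\mathsf{cmn}\)--limit, which is \(G_{\sigma_0}\) modulo the (generally non--normal) subgroup \(Z_0\) --- and not the group \(G=G_{\sigma_0}/N_0\). As you yourself note, these differ in general (in Example \ref{ex left-inv} the former has three points and the latter one), so the construction produces a non--expansive surjection \(X\longrightarrow L\); one still has to prove that it factors through the further quotient \(X\longrightarrow G\) and that the result is a homomorphism, and nothing in the sketch does this. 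Likewise, the isometric isomorphism of the two bi--invariant limits supplied by Theorem \ref{main_groups} lives one level up: to descend it to \(G\cong L\) one must show that it carries the auxiliary left--invariant pseudometric of one system to that of the other, and hence matches the normal subgroups being divided out. Establishing precisely that compatibility is the content of the paper's argument (the automorphisms \(\widehat{\iota_\sigma}\) satisfying \(\widehat{\iota_\sigma}\circ\widehat{\alpha_{\sigma_0}^\sigma}=\widehat{\beta_{\sigma_0}^\sigma}\) and identity \eqref{eq 6}), and your proposal supplies no substitute for it. Your closing step --- two non--expansive surjective homomorphisms in opposite directions compose to self--maps that must be isometries, whence both maps are isometric isomorphisms --- is fine once the two maps exist, but producing them is the whole difficulty.
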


\begin{proof}
Using Lemma \ref{hat lemma} it is clear that \((\{\widehat{G_\sigma}\}_{\sigma \in \Sigma}\), \(\{\widehat{\alpha_{\sigma}^{\tau}}\}_{\sigma \leqslant \tau})\) is a direct system satisfying assumtions of~Theorem \ref{main_groups}.
Therefore, it has an inductive limit \(((H\), \(d_H)\), \(\{\widehat{\alpha_\sigma} \colon \widehat{G_\sigma} \longrightarrow H\})\).
Let \(d\) be the greatest left--invariant pseudometric on \(H\) such that \(d \leqslant d_H\) and each \(\alpha_\sigma \colon G_\sigma \longrightarrow H\) for \(\sigma \in \Sigma\) is non--expansive with respect to~\(d\).
Let \(N\) be~the~smallest closed normal subgroup of \(H\) containing the set \(\{h: \ d(h\),~\(e_H) = 0\}\).
Consider \(G\) to be the quotient \(\faktor{G}{N}\) with the quotient map \(\pi \colon H \longrightarrow G\).
Of course, \(G\)~is a compact topological group.
Define 
\[
\displaystyle d_G \colon G \times G \ni ([g]_N, \, [h]_N) \longmapsto \inf_{n_1, \, n_2 \, \in \, N} d(g \, n_1, \, h \, n_2) \in \mathbb{R}_+.
\]

We now justify that \(d_G\) is a left--invariant metric on \(G\).
Let \(g\), \(h \in H\) be such that \(d_G([g]_N\),~\([h]_N) = 0\).
This means that there is a sequence \(\{n_j\}_{j \in \mathbb{N}}\) in \([h^{-1} \, g]_H\) such that \(d(h^{-1} \, g \, n_j\),~\(e_H) \longrightarrow 0\) when \(j \longrightarrow \infty\).
However, by the compactness of \(H\), without loss of generality we may and do assume that~\(\{h^{-1} \, g \, n_j\}_{j \in \mathbb{N}}\) converges to \(f \in [h^{-1} \, g]_N\).
But then \(d(f\), \(e_H) = 0\), so \([h^{-1} \, g]_N = N\), and thus \(d_G\) separates points. \linebreak
To see that \(d_G\) is a metric, it is enough to check the triangle inequality.
Fix \(f\), \(g\), \(h \in H\).
Pseudometric \(d\) is~bi--invariant, so
\[
\displaystyle d(f \, n_1, \ h \, n_3)
\leqslant
\displaystyle d(g^{-1} \, f \, n_1 \, n_2^{-1}, \ e_H)
+
\displaystyle d(h^{-1} \, g \, n_2 \, n_3^{-1}, \ e_H)\\
\]
for every \(n_1\), \(n_2\), \(n_3 \in N\).
But left and right multiplications on a group \(N\) are bijective, therefore, after~taking~infimum  over \(n_1\), \(n_2\), \(n_3\) on both sides of the above inequality we get the triangle inequality.
Finally, it~is~obvious that \(d_G\) is bi--invariant.

We now show that \(d_G\) induces topology of \(G\).
Let \(\{g_j\}_{j \in \mathbb{N}}\) be a sequence in \(G\) convergent to some \(g\). \linebreak
We find for every \(j \in \mathbb{N}\) an element \(h_j \in [g_j]_N\).
By the compactness of \(H\), there is a subsequence \(\{h_{\nu_j}\}_{j \in \mathbb{N}}\) of~\(\{h_j\}_{j \in \mathbb{N}}\) that converges to \(h \in H\) with respect to \(d_H\).
Because of that \(d_G([h_{\nu_j}]_N\), \([h]_N) \leqslant d_H(h_{\nu_j}\), \(h) \longrightarrow 0\), when \(j \longrightarrow \infty\), so \(\{[h_{\nu_j}]_N\}_{j \in \mathbb{N}}\) converges to \([h]_N\) with respect to \(d_G\).
But the quotient map \(\pi \colon H \longrightarrow G\) is~continuous, so \([g]_N = [h]_N\).
Starting this reasoning with an arbitrary subsequence of \(\{g_j\}_{j \in \mathbb{N}}\) we would get that \(G \ni g \longmapsto g \in (G\), \(d_G)\) is continuous.
Similar argument shows that this mapping is a homeomorphism, so \(d_G\) indeed induces topology of \(G\).

We have shown that \(d_G\) is a left--invariant metric on \(G = \faktor{H}{N}\) that induces its topology.
For every \(\sigma \in \Sigma\) consider a surjecive arrow \(\beta_\sigma \defeq~\pi \circ \alpha_\sigma\).
Of course, the family \(\{\beta_\sigma\}_{\sigma \in \Sigma}\) fulfills condition~(\ref{Inductive limit 1}) from~Definition~\ref{Inductive limit}.
We will prove that the pair \(((G\), \(d_G)\), \(\{\beta_\sigma \colon G_\sigma \longrightarrow G\}_{\sigma \in \Sigma})\) is an inducive limit of~our~direct system.

Assume that \((K\), \(d_K)\) is a compact metric group and \(\{\xi_\sigma \colon G_\sigma \longrightarrow K\}_{\sigma \in \Sigma}\) is a family of non--expansive homomorphisms such that \(\xi_\tau \circ \alpha_\sigma^\tau = \xi_\sigma\) for \(\sigma \leqslant \tau\) in \(\Sigma\).
Then the family \(\{\widehat{\xi_\sigma} \colon \widehat{G_\sigma} \longrightarrow \widehat{K}\}_{\sigma \in \Sigma}\) has the~analogous property.
Therefore, because \(H\) is an object of a direct limit of \((\{\widehat{G_\sigma}\}_{\sigma \in \Sigma}\), \(\{\widehat{\alpha_\sigma^\tau}\}_{\sigma \leqslant \tau})\) in the category of~compact metric groups with bi--inariant metrics, there exists a unique \(\nu \colon H \longrightarrow \widehat{K}\) such that \(\nu \circ \widehat{\alpha_\sigma} = \widehat{\xi_\sigma}\) for every \(\sigma \in \Sigma\).
But \(\nu\) is non--expansive, that is \(\widehat{d_K} \circ (\nu \times \nu) \leqslant d_H\), so all the more \(d_K \circ (\nu \times \nu) \leqslant d_H\), as~well~as \(\xi_\sigma\) for every \(\sigma \in \Sigma\), and thus
\[
\displaystyle (d_K \circ (\nu \times \nu)) \circ (\alpha_\sigma \times \alpha_\sigma)
=
\displaystyle d_K \circ (\xi_\sigma \times \xi_\sigma)
\leqslant
\displaystyle d_\sigma
\]
for every \(\sigma \in \Sigma\).
By the choice of \(d\) we get that \(d_K \circ (\nu \times \nu) \leqslant d\).
This means that \(\ker(\nu)\) contains \(\{h: \ d(h\),~\(e_H) = 0\}\), so \(\ker(\nu)\) also contains \(N\).
Therefore, because \(N = \ker(\pi)\), there~exists an epimorphism \(\omega \colon G \longrightarrow K\) such that \(\nu = \omega \circ \pi\).
For every \(\sigma \in \Sigma\) we have then
\[
\displaystyle \xi_\sigma 
=
\displaystyle \nu \circ \alpha_\sigma
=
\displaystyle \omega \circ (\pi \circ \alpha_\sigma)
=
\displaystyle \omega \circ \beta_\sigma,
\]
and \(\omega\) is the only mapping with that property, for \(\beta_\sigma\) is surjective.
To finish the proof of the main part of~this~theorem, it is sufficient to show that \(\omega\) is non--expansive.
To accomplish this, we once again use the~fact that \(d_K \circ (\nu \times \nu) \leqslant~d\), i.e. \(d_K \circ ((\omega \times \omega) \circ (\pi \times \pi)) \leqslant d\), as this inequality is equivalent to~the~following one \(d_K \circ (\omega \times \omega) \leqslant d_G\) (to see this, it suffices to fix \(g\), \(h \in H\), write the first one with arguments \(g' \in [g]_N\), \(h' \in [h]_N\), and take infimum over such \(g'\), \(h'\) on both sides of the inequality).
This proves that \(\omega\) is non--expansive, and hence finishes the main part of the proof.

We now move to the ``furthermore'' part.
Let \(\sigma_0\) and \(Y\) be as in the proofs of previous versions of~this~theorem.
Let \((\{(G_\sigma, \, d_\sigma)\}_{\sigma \in \Sigma}\), \(\{\beta_{\sigma}^{\tau} \colon G_\sigma \longrightarrow G_\tau\}_{\sigma \leqslant \tau})\) be a direct system satisfying assumptions of this theorem.
Let \(((K\), \(d_K)\), \(\{\widehat{\beta_\sigma} \colon \widehat{G_\sigma} \longrightarrow K\}_{\sigma \in \Sigma})\) be an inductive limit (in the category of compact metric groups with bi--invariant metrics) of \((\{\widehat{G_\sigma}\}_{\sigma \in \Sigma}\), \(\{\widehat{\beta_{\sigma}^{\tau}}\}_{\sigma \leqslant \tau})\).
Finally, let \(L\) be an object of an inductive limit \linebreak (that was constructed in accordance with the previous part of this proof) of \((\{G_\sigma\}_{\sigma \in \Sigma}\),~\(\{\beta_{\sigma}^{\tau}\}_{\sigma \leqslant \tau})\).
To prove that there is an isometric isomorphism between \(G\) and \(L\), it is sufficient to find an isometric isomorphism \(\beta \colon (H\), \(d) \longrightarrow (K\), \(\zeta)\), where \(\zeta\) is the greatest left--invariant pseudometric on \(K\) such that \(\zeta \leqslant d_K\) and each~element of \(\{\beta_\sigma \colon G_\sigma \longrightarrow K\}_{\sigma \in \Sigma}\) is non--expansive with respect to \(\zeta\).
(Note that, because \(d_\tau \circ (\beta_\sigma^\tau \times \beta_\sigma^\tau) \leqslant d_\sigma\) for~all \(\sigma \leqslant \tau\) in \(\Sigma\), we can write a simplified equivalent version of the aforementioned definition of \(\zeta\), in~which we consider any subnet of \(\{\beta_\sigma\}_{\sigma \in \Sigma}\) instead of the whole net.
The same stands for \(d\).
We will stick to~this~simplified version.)
To achieve this, we will slightly modify the proof of the analogous part of~Theorem \ref{main_groups}.

Note that for every \(\sigma \geqslant \sigma_0\) there is a unique isometric isomorphism \(\widehat{\iota_\sigma} \colon \widehat{G_\sigma} \longrightarrow \widehat{G_\sigma}\) such that \(\widehat{\iota_\sigma} \circ \widehat{\alpha_{\sigma_0}^\sigma} = \widehat{\beta_{\sigma_0}^\sigma}\).
This in turn means that 
\begin{equation} \label{eq 5}
\displaystyle \widehat{\beta_\sigma} \circ \widehat{\iota_\sigma} \circ \widehat{\alpha_{\sigma_0}^\sigma} = \widehat{\beta_{\sigma_0}}
\end{equation}
for every \(\sigma \geqslant \sigma_0\).
Naturally, this time, instead of family \(\{\widehat{\beta_\sigma}\}_{\sigma \geqslant \sigma_0}\), we will consider \(\{\widehat{\beta_\sigma} \circ \widehat{\iota_\sigma}\}_{\sigma \geqslant \sigma_0}\).
Similarly as in Theorem \ref{main_groups}, we infer that there exists a subnet \(\{\Gamma(\widehat{\beta_{\eta_\lambda}} \circ \widehat{\iota_{\eta_\lambda}})\}_{\lambda \in \Lambda}\) of \(\{\Gamma(\widehat{\beta_\sigma} \circ \widehat{\iota_\sigma})\}_{\sigma \geqslant \sigma_0}\) that~is~convergent in~\(\mathfrak{K}(Y \times Y)\) to a graph of an isometric isomorphism \(\beta \colon H \longrightarrow K\).
Fix \(g \in G_{\sigma_0}\) and consider the net \(\{\widehat{\alpha_{\sigma_0}^{\eta_\lambda}}(g)\}_{\lambda \in \Lambda}\).
We are already well aware that this net converges to \(\widehat{\alpha_{\sigma_0}}(g)\).
Therefore, because \(\widehat{\beta_{\eta_\lambda}}(\widehat{\iota_{\eta_\lambda}}(\widehat{\alpha_{\sigma_0}^{\eta_\lambda}}(g))) = \widehat{\beta_{\sigma_0}}(g)\) for every \(\lambda \in \Lambda\), we get that 
\[
\displaystyle \Gamma\left(\widehat{\beta_{\eta_\lambda}} \circ \widehat{\iota_{\eta_\lambda}}\right) \ni \left(\widehat{\alpha_{\sigma_0}^{\eta_\lambda}}(g), \ \widehat{\beta_{\eta_\lambda}}\left(\widehat{\iota_{\eta_\lambda}}\left(\widehat{\alpha_{\sigma_0}^{\eta_\lambda}}(g)\right)\right)\right)
\overset{\lambda \in \Lambda}{\longrightarrow}
\displaystyle \left(\widehat{\alpha_{\sigma_0}}(g), \ \widehat{\beta_{\sigma_0}}(g)\right) \in \Gamma(\beta).
\]
This means that \(\beta(\widehat{\alpha_{\sigma_0}}(g)) = \widehat{\beta_{\sigma_0}}(g)\).
But this fact, together with \eqref{eq 5}, allows us to write for \(\sigma \geqslant \sigma_0\)
\[
\displaystyle \beta\left(\widehat{\alpha_{\sigma}}\left(\widehat{\alpha_{\sigma_0}^\sigma}(g)\right)\right)
=
\displaystyle \beta\left(\widehat{\alpha_{\sigma_0}}(g)\right)
=
\displaystyle \widehat{\beta_{\sigma_0}}(g)
=
\displaystyle \widehat{\beta_\sigma}\left(\widehat{\iota_{\sigma}}\left(\widehat{\alpha_{\sigma_0}^\sigma}(g)\right)\right).
\]
Therefore, because \(\alpha_{\sigma_0}^\sigma\) is surjective, we have just shown that \(\beta \circ \widehat{\alpha_\sigma} = \widehat{\beta_\sigma} \circ \widehat{\iota_\sigma}\), i.e., 
\begin{equation} \label{eq 6}
\displaystyle \widehat{\alpha_\sigma}
=
\displaystyle \beta^{-1} \circ \widehat{\beta_\sigma} \circ \widehat{\iota_\sigma},
\end{equation}
for~every \(\sigma \geqslant \sigma_0\) (here, of course, \(\iota_{\sigma_0} = \OPN{id}_{G_{\sigma_0}}\)).

Let \(\delta_H\) be a left--invariant pseudometric on \(H\) such that \(\delta_H \leqslant d_H\) and every element of the family \(\{\alpha_\sigma \colon G_\sigma \longrightarrow H\}_{\sigma \geqslant \sigma_0}\) is non--expansive with respect to \(\delta_H\).
Thanks to this and \eqref{eq 6}, it is straightforward to~check that \(\delta_K \defeq~\delta_H \circ (\beta^{-1} \times \beta^{-1})\) has analogous properties.
What is more, obviously, \(\beta \colon (H\),~\(\delta_H) \longrightarrow~(K\),~\(\delta_K)\) is~an~isometry.
But \(\beta\) and \(\widehat{\iota_\sigma}\) for \(\sigma \geqslant \sigma_0\) are isomorphisms, so we can write
\[
\displaystyle \widehat{\beta_\sigma}
=
\displaystyle \beta \circ \widehat{\alpha_\sigma} \circ \widehat{\iota_\sigma^{-1}}
\]
and repeat this reasoning in the opposite direction to get that \(\beta \colon (H\), \(d) \longrightarrow (K\), \(\zeta)\) is an isometric isomorphism, what was to prove.
Of course, one~proves the point (\ref{main_inductive_compact_metric_groups f3}) of the ``furthermore'' part exactly as~in~the~proof of~Theorem~\ref{main}.
\end{proof}

\subsection*{Inverse limit}
We will now make the statement of the last theorem concerning categorical limits of~this~paper, the proof of which is carried out in the same way as the proof of Theorem~\ref{inverse limit - main_bi_mg}.

\begin{thm} \label{main_inverse_compact_metric_groups}
Let \((\{(G^\sigma\), \(d^\sigma)\}_{\sigma \in \Sigma}\), \(\{\alpha^{\sigma}_{\tau} \colon G^\tau\longrightarrow G^\sigma\}_{\sigma \leqslant \tau})\) be an inverse system in the category of compact metric groups with non--expansive homomorphisms as arrows, where \(\alpha^\sigma_\tau\) are~surjective for all \(\sigma \leqslant \tau\). \linebreak
Assume that there exists an object in \(\mathsf{cmn}\) and surjective arrows from this object onto~\(G^\sigma\)
for \(\sigma \in \Sigma\) in \(\mathsf{cmn}\). \nopagebreak \linebreak
Then this system has inverse limit. \\
Furthermore,
\begin{enumerate}
\item its object does~not depend, up to isometric isomorphism, on the choice of surjective arrows for~the~system, and \label{main_inverse_compact_metric_groups f1}
\item its arrows are surjective. \label{main_inverse_compact_metric_groups f3}
\end{enumerate}
\end{thm}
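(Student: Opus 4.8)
The plan is to repeat the proof of Theorem~\ref{inverse limit - main_bi_mg} almost word for word, the guiding observation being that for the existence of the limit together with its universal property only \emph{left}--invariance of the metrics $d^\sigma$ is needed; bi--invariance entered there only in the part of the ``furthermore'' claim asserting independence of the object from the choice of arrows, and there it will be restored through Lemma~\ref{hat lemma}.

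First I would embed $\{G^\sigma\}_{\sigma\in\Sigma}$ isometrically into a compact metric space $(Y,\varrho)$ --- legitimate by Gromov's Theorem~\ref{Gromov}, exactly as in the proof of Lemma~\ref{inverse limit - main}, since by assumption $\{G^\sigma\}_{\sigma\in\Sigma}$ is bounded from above with respect to $\preccurlyeq$ --- form the points $\gamma_\tau=(G^\tau,\{\Gamma(\beta_\tau^\sigma)\}_{\sigma\in\Sigma})$ as there, and apply Tikhonov's theorem to extract a convergent subnet $\{\gamma_{\eta_\lambda}\}_{\lambda\in\Lambda}$. Its limit furnishes a compact set $G\subseteq Y$, the Hausdorff limit of $\{G^{\eta_\lambda}\}$, together with non--expansive surjections $\alpha^\sigma\colon G\longrightarrow G^\sigma$ satisfying condition~(\ref{Inverse limit 1}) of Definition~\ref{Inverse limit} and the description~\eqref{eq 4}. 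After one more passage to a subnet one may assume that $\{\alpha^{\eta_\lambda}\}_{\lambda\in\Lambda}$ tends uniformly to $\OPN{id}_G$ and that for all $g,h\in G$ the nets $\{\alpha^{\eta_\lambda}(g)\cdot\alpha^{\eta_\lambda}(h)\}_{\lambda\in\Lambda}$ and $\{\alpha^{\eta_\lambda}(g)^{-1}\}_{\lambda\in\Lambda}$ converge, and I would set $g\cdot h\defeq\lim_{\lambda\in\Lambda}\alpha^{\eta_\lambda}(g)\cdot\alpha^{\eta_\lambda}(h)$. Checking that this is a group operation on $G$ is routine once one knows, as in Theorem~\ref{inverse limit - main_bi_mg}, that $\alpha^\sigma(g\cdot h)=\alpha^\sigma(g)\cdot\alpha^\sigma(h)$ (which follows from $\alpha^\sigma(g)\cdot\alpha^\sigma(h)=\alpha^\sigma_{\eta_\lambda}(\alpha^{\eta_\lambda}(g))\cdot\alpha^\sigma_{\eta_\lambda}(\alpha^{\eta_\lambda}(h))=\alpha^\sigma_{\eta_\lambda}(\alpha^{\eta_\lambda}(g)\cdot\alpha^{\eta_\lambda}(h))$ for large $\lambda$ together with~\eqref{eq 4}); in particular each $\alpha^\sigma$ is a homomorphism. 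Moreover $\varrho|_{G\times G}$ is left--invariant, since for $g,h,k\in G$
\[
\varrho(k\cdot g,\,k\cdot h)=\lim_{\lambda\in\Lambda}d^{\eta_\lambda}\!\left(\alpha^{\eta_\lambda}(k)\cdot\alpha^{\eta_\lambda}(g),\ \alpha^{\eta_\lambda}(k)\cdot\alpha^{\eta_\lambda}(h)\right)=\lim_{\lambda\in\Lambda}d^{\eta_\lambda}\!\left(\alpha^{\eta_\lambda}(g),\ \alpha^{\eta_\lambda}(h)\right)=\varrho(g,h),
\]
where \emph{only} left--invariance of the $d^{\eta_\lambda}$ enters. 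Hence $(G,\varrho|_{G\times G})$ is a compact metric group and $(G,\{\alpha^\sigma\}_{\sigma\in\Sigma})$ satisfies~(\ref{Inverse limit 1}). For the universal property I would argue exactly as in Theorem~\ref{inverse limit - main_bi_mg}: given a compact metric group $H$ and a compatible family $\{\xi^\sigma\colon H\longrightarrow G^\sigma\}_{\sigma\in\Sigma}$ of non--expansive homomorphisms, pass to a subnet with $\{\xi^{\eta_\lambda}\}_{\lambda\in\Lambda}$ uniformly convergent, put $\nu=\lim_{\lambda\in\Lambda}\xi^{\eta_\lambda}\colon H\longrightarrow G$; then non--expansiveness of $\nu$ and the identities $\alpha^\sigma\circ\nu=\xi^\sigma$ follow as in Lemma~\ref{inverse limit - main}, uniqueness of $\nu$ follows from $\alpha^{\eta_\lambda}\to\OPN{id}_G$, and $\nu$ is a homomorphism by the formal computation $\nu(ab)=\lim_\lambda\alpha^{\eta_\lambda}(\nu(ab))=\lim_\lambda\xi^{\eta_\lambda}(a)\cdot\xi^{\eta_\lambda}(b)=\lim_\lambda\alpha^{\eta_\lambda}(\nu(a))\cdot\alpha^{\eta_\lambda}(\nu(b))=\nu(a)\cdot\nu(b)$. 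This establishes the existence of the inverse limit, and point~(\ref{main_inverse_compact_metric_groups f3}) then follows verbatim from point~(\ref{inverse limit - f2}) of Lemma~\ref{inverse limit - main}.

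It remains to treat point~(\ref{main_inverse_compact_metric_groups f1}). Its metric content is free: any subnet of $\{G^\sigma\}_{\sigma\in\Sigma}$ has a sub--subnet converging in $\mathfrak{GH}$ to the object built above, so the net converges and the object is unique up to isometry by the statement for $\mathsf{cmn}$. To upgrade this to uniqueness up to isometric isomorphism one argues as in the ``furthermore'' parts of Theorems~\ref{main_groups} and~\ref{inverse limit - main_bi_mg}: compare $G$ with the limit of a system having the same objects and other surjective arrows, realize both as the Hausdorff limit of $\{G^\sigma\}$ along one common subnet of $Y$, and check that the two induced group operations coincide under that identification. \textbf{This is the main obstacle}, because the bi--invariant argument used, at the crucial point, the continuity property ``$g^{\eta_\lambda}\to g$ and $h^{\eta_\lambda}\to h$ imply $g^{\eta_\lambda}\cdot h^{\eta_\lambda}\to g\cdot h$'' via an estimate of the form
\[
d^{\eta_\lambda}\!\left(g^{\eta_\lambda}\cdot h^{\eta_\lambda},\ \alpha^{\eta_\lambda}(g)\cdot\alpha^{\eta_\lambda}(h)\right)\leqslant d^{\eta_\lambda}\!\left(g^{\eta_\lambda},\ \alpha^{\eta_\lambda}(g)\right)+d^{\eta_\lambda}\!\left(h^{\eta_\lambda},\ \alpha^{\eta_\lambda}(h)\right),
\]
one of whose summands needs right--invariance. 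The remedy is Lemma~\ref{hat lemma}: the offending summand is at most $\widehat{d^{\eta_\lambda}}(g^{\eta_\lambda},\alpha^{\eta_\lambda}(g))$, which one controls by carrying out the Tikhonov construction for the hatted inverse system $(\{\widehat{G^\sigma}\}_{\sigma\in\Sigma},\{\widehat{\alpha^\sigma_\tau}\}_{\sigma\leqslant\tau})$ --- whose arrows are again surjective and non--expansive by Lemma~\ref{hat lemma} --- alongside the original one and transferring the resulting convergence back; equivalently, one may base the entire proof on the hatted system, apply Theorem~\ref{inverse limit - main_bi_mg} to it (after checking it inherits the boundedness hypothesis), and recover $G$ from the bi--invariant limit dually to the quotient construction of Theorem~\ref{main_inductive_compact_metric_groups}. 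In either organization, essentially all the work of point~(\ref{main_inverse_compact_metric_groups f1}) lies in this compatibility between the hatted and un--hatted pictures.
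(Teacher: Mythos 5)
Your treatment of the existence of the limit and of its universal property is sound and coincides with what the paper intends: the paper's entire proof of this theorem is the instruction that it ``is carried out in the same way as the proof of Theorem~\ref{inverse limit - main_bi_mg}'', and your diagnosis that bi--invariance is consumed only in the estimate behind the multiplicative--continuity property needed for point~(\ref{main_inverse_compact_metric_groups f1}) is exactly right. One caveat on the main body: left--invariance of \(\varrho|_{G \times G}\) is not enough to make \((G, \cdot)\) a \emph{topological} group --- joint continuity of multiplication and continuity of inversion do not drop out of the triangle inequality as they do in the bi--invariant case, so ``routine'' hides something. They do hold, but by a different argument: the maps \(\alpha^\sigma\) separate points of \(G\) (since \(\alpha^{\eta_\lambda} \longrightarrow \OPN{id}_G\)), so \(g \longmapsto (\alpha^\sigma(g))_{\sigma \in \Sigma}\) is a continuous injection of the compact space \(G\) into the topological group \(\prod_{\sigma \in \Sigma} G^\sigma\), hence a homeomorphic isomorphism onto a subgroup.

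The genuine gap is in your remedy for point~(\ref{main_inverse_compact_metric_groups f1}). Both organizations you propose run through the hatted system \((\{\widehat{G^\sigma}\}_{\sigma \in \Sigma}, \{\widehat{\alpha^\sigma_\tau}\}_{\sigma \leqslant \tau})\), and both founder on the same two points. First, the hypothesis of the theorem bounds \(\{(G^\sigma, d^\sigma)\}_{\sigma \in \Sigma}\); since \(\widehat{d^\sigma} \geqslant d^\sigma\) with no uniform comparison in the other direction, the hatted family need not be uniformly compact, so Theorem~\ref{inverse limit - main_bi_mg} need not apply to it and the Tikhonov construction cannot even be set up for it --- you flag this in a parenthesis, but that check \emph{is} the problem, and I see no reason it should succeed for an inverse system (for a direct system one could push dense sets forward from a fixed \(\widehat{G_{\sigma_0}}\); here there is no top object). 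Second, even granting a hatted limit, the convergences you actually possess, such as \(g^{\eta_\lambda} \longrightarrow g\), live in the \emph{smaller} metrics and do not transfer forward to the hatted ones, so \(\widehat{d^{\eta_\lambda}}(g^{\eta_\lambda}, \alpha^{\eta_\lambda}(g)) \longrightarrow 0\) cannot be extracted this way. Fortunately the hat is unnecessary. To see that \(g^{\eta_\lambda} \longrightarrow g\) and \(h^{\eta_\lambda} \longrightarrow h\) (with \(g^{\eta_\lambda}, h^{\eta_\lambda} \in G^{\eta_\lambda}\) and \(g, h \in H\)) force \(g^{\eta_\lambda} \cdot h^{\eta_\lambda} \longrightarrow g \cdot h\), lift through the surjections \(\beta^{\eta_\lambda} \colon H \longrightarrow G^{\eta_\lambda}\): write \(g^{\eta_\lambda} = \beta^{\eta_\lambda}(\tilde{g}_\lambda)\) and \(h^{\eta_\lambda} = \beta^{\eta_\lambda}(\tilde{h}_\lambda)\); the uniform convergence \(\beta^{\eta_\lambda} \longrightarrow \OPN{id}_H\) gives \(\tilde{g}_\lambda \longrightarrow g\) and \(\tilde{h}_\lambda \longrightarrow h\) in \(H\); joint continuity of multiplication in \(H\) gives \(\tilde{g}_\lambda \cdot \tilde{h}_\lambda \longrightarrow g \cdot h\); and then \(g^{\eta_\lambda} \cdot h^{\eta_\lambda} = \beta^{\eta_\lambda}(\tilde{g}_\lambda \cdot \tilde{h}_\lambda) \longrightarrow g \cdot h\), again by uniform convergence. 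No invariance of the metrics enters. With this substitute for the offending estimate, the rest of your argument for point~(\ref{main_inverse_compact_metric_groups f1}) goes through.
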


\section{Iso--derivative and iso--height} \label{Section 5}
This section will be devoted to the presentation of the proposition of application of Theorem \ref{main}. \linebreak
Let \((X\), \(d)\) be a compact metric space.
Consider \(\OPN{Iso}(X\), \(d)\), the set of all isometries on \(X\), and equivalence relation \(\underset{^{\OPN{iso}}}{\sim}\)
on \(X\) defined as follows:
\[
x \underset{^{\OPN{iso}}}{\sim} y \overset{\OPN{def}}{\Longleftrightarrow} \varphi(x) = y \textnormal{ for some } \varphi \in \OPN{Iso}(X, \, d).
\]

\begin{deff}
Let \(\puo{X}{1}\) be the quotient set 
\(\faktor{X}{_{\underset{\OPN{iso}}{\sim}}}\).
Denote by \(\puo{d}{1}\) the greatest pseudometric on \(\puo{X}{1}\) that~makes the canonical projection \(\puo{\pi}{1}_X \colon (X\), \(d) \longrightarrow (\puo{X}{1}\), \(\puo{d}{1})\) non--expansive.
We call the space \((\puo{X}{1}\),~\(\puo{d}{1})\) to be the \emph{first iso--derivative} of~\((X\),~\(d)\) or, briefly, the \emph{iso--derivative} of \((X\), \(d)\).
If \((X\), \(d)\) is~such~that \((\puo{X}{1}\), \(\puo{d}{1}) = (X\), \(d)\), then we call it to be \emph{iso--rigid}.
\end{deff}

As was shown in \cite[Proposition \(5.1\)]{Central Points}, defined above \(\puo{d}{1}\) is a metric,
and for all \(x\), \(y \in X\)
\[
\displaystyle \puo{d}{1}\left(\puo{\pi}{1}_X(x), \, \puo{\pi}{1}_X(y)\right)
\displaystyle = \sup\left\{ \left|f\left(\puo{\pi}{1}_X(x)\right) - f\left(\puo{\pi}{1}_X(y)\right)\right| \colon \
\displaystyle \, f \colon \puo{X}{1} \rightarrow \mathbb{R}, \, f \circ \puo{\pi}{1}_X \textnormal{ is non--expansive}  \right\}.
\]

By this fact, the space \((\puo{X}{1}\), \(\puo{d}{1})\) is a compact metric space.
Fortunately, the formula for \(\puo{d}{1}\) can be expressed in even simpler and more explicit way.

\begin{prop} \label{d1}
For any \(x, y \in X\)
\[
\displaystyle
\puo{d}{1}\left(\puo{\pi}{1}_X(x), \, \puo{\pi}{1}_X(y)\right)
\ = \inf_{\varphi, \, \psi \, \in \, \OPN{Iso}(X, \, d)}d(\varphi(x), \, \psi(y)).
\]
\end{prop}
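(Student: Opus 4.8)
The plan is to show that the right-hand side, regarded as a function on $\puo{X}{1} \times \puo{X}{1}$, is a pseudometric with respect to which the canonical projection $\puo{\pi}{1}_X$ is non-expansive, and then to squeeze it against $\puo{d}{1}$ from both sides using the fact that $\puo{d}{1}$ is, by definition, the \emph{greatest} such pseudometric.

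First I would set $\bar\rho(\puo{\pi}{1}_X(x),\puo{\pi}{1}_X(y)) \defeq \inf_{\varphi,\psi \,\in\, \OPN{Iso}(X,\,d)} d(\varphi(x),\psi(y))$ and check that this is well defined, i.e.\ depends only on the $\underset{^{\OPN{iso}}}{\sim}$-classes of $x$ and $y$. Indeed, if $x' = \chi(x)$ for some $\chi \in \OPN{Iso}(X,\,d)$, then $\{\varphi(x') \colon \varphi \in \OPN{Iso}(X,\,d)\} = \{\varphi(x) \colon \varphi \in \OPN{Iso}(X,\,d)\}$ because $\OPN{Iso}(X,\,d)$ is a group under composition, and likewise in the second variable. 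Symmetry of $\bar\rho$ is immediate (swap $\varphi$ and $\psi$ and use symmetry of $d$), non-negativity is clear, and $\bar\rho(\puo{\pi}{1}_X(x),\puo{\pi}{1}_X(x)) = 0$ by taking $\varphi = \psi = \OPN{id}_X$.

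The main point is the triangle inequality, and this is where a little ingenuity is needed. Fix $x,y,z \in X$ and $\varepsilon > 0$, and choose $\varphi_1,\psi_1$ with $d(\varphi_1(x),\psi_1(y)) < \bar\rho(\puo{\pi}{1}_X(x),\puo{\pi}{1}_X(y)) + \varepsilon$ and $\varphi_2,\psi_2$ with $d(\varphi_2(y),\psi_2(z)) < \bar\rho(\puo{\pi}{1}_X(y),\puo{\pi}{1}_X(z)) + \varepsilon$. The trick is to align the two ``copies'' of $y$ by applying the isometry $\chi \defeq \varphi_2 \circ \psi_1^{-1}$ to the first pair: since $\chi$ preserves $d$ and $\chi(\psi_1(y)) = \varphi_2(y)$, we get $d((\chi \circ \varphi_1)(x),\varphi_2(y)) = d(\varphi_1(x),\psi_1(y))$, and then the ordinary triangle inequality in $(X,\,d)$ applied to $(\chi \circ \varphi_1)(x)$, $\varphi_2(y)$, $\psi_2(z)$ — noting that $\chi \circ \varphi_1,\psi_2 \in \OPN{Iso}(X,\,d)$ — yields $\bar\rho(\puo{\pi}{1}_X(x),\puo{\pi}{1}_X(z)) \leqslant d((\chi \circ \varphi_1)(x),\psi_2(z)) < \bar\rho(\puo{\pi}{1}_X(x),\puo{\pi}{1}_X(y)) + \bar\rho(\puo{\pi}{1}_X(y),\puo{\pi}{1}_X(z)) + 2\varepsilon$. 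Letting $\varepsilon \to 0$ completes the verification that $\bar\rho$ is a pseudometric; everything else in this step is routine.

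It remains to identify $\bar\rho$ with $\puo{d}{1}$. On one hand, taking $\varphi = \psi = \OPN{id}_X$ shows $\bar\rho(\puo{\pi}{1}_X(x),\puo{\pi}{1}_X(y)) \leqslant d(x,y)$ for all $x,y$, so $\puo{\pi}{1}_X \colon (X,\,d) \longrightarrow (\puo{X}{1},\,\bar\rho)$ is non-expansive, and since $\puo{d}{1}$ is the greatest pseudometric on $\puo{X}{1}$ with this property, $\bar\rho \leqslant \puo{d}{1}$. On the other hand, $\puo{\pi}{1}_X$ is non-expansive with respect to $\puo{d}{1}$ and is constant on $\underset{^{\OPN{iso}}}{\sim}$-classes, so for every $\varphi,\psi \in \OPN{Iso}(X,\,d)$ we have $\puo{d}{1}(\puo{\pi}{1}_X(x),\puo{\pi}{1}_X(y)) = \puo{d}{1}(\puo{\pi}{1}_X(\varphi(x)),\puo{\pi}{1}_X(\psi(y))) \leqslant d(\varphi(x),\psi(y))$, and taking the infimum over $\varphi,\psi$ gives $\puo{d}{1} \leqslant \bar\rho$. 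Hence $\puo{d}{1} = \bar\rho$, which is exactly the claimed formula. (Alternatively, the last direction could be obtained from the known supremum formula for $\puo{d}{1}$ recalled just before the statement, by noting that any $f$ with $f\circ\puo{\pi}{1}_X$ non-expansive is $\OPN{Iso}(X,\,d)$-invariant, but the argument above is shorter and self-contained.) The only non-mechanical part of the whole proof is the alignment trick with $\varphi_2 \circ \psi_1^{-1}$ used for the triangle inequality.
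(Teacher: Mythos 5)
Your proposal is correct and follows essentially the same route as the paper: verify that the right-hand side is a pseudometric making \(\puo{\pi}{1}_X\) non--expansive (the only delicate point being the triangle inequality) and then invoke maximality of \(\puo{d}{1}\). Your ``alignment trick'' with \(\chi = \varphi_2 \circ \psi_1^{-1}\) is exactly the paper's observation that the double infimum collapses to a single one via \(d(\varphi(x), \, \psi(y)) = d((\psi^{-1} \circ \varphi)(x), \, y)\); you are merely more explicit about well--definedness and about both directions of the comparison with \(\puo{d}{1}\), which the paper leaves to the reader.
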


\begin{proof}
It is enough to prove that the above formula defines the greatest pseudometric for which \(\puo{\pi}{1}_X\) is~nonexpansive.
The only thing worth commenting on here is the triangle inequality.
Notice that for any \(x\), \(y \in X\) we have
\[
\displaystyle \inf_{\varphi, \, \psi \, \in \, \OPN{Iso}(X, \, d)}d(\varphi(x), \, \psi(y))
\ = \inf_{\zeta \, \in \, \OPN{Iso}(X, \, d)}d(\zeta(x), \, y),
\]
because \(d(\varphi(x)\), \(\psi(y)) = d((\psi^{-1} \circ \varphi)(x)\), \(y)\) for any \(\varphi\), \(\psi \in \OPN{Iso}(X, \, d)\).

But, for any \(x\), \(y\), \(z \in X\): 
\[
\displaystyle \inf_{\varphi, \, \psi \, \in \, \OPN{Iso}(X, \, d)}d(\varphi(x), \, \psi(z))
\displaystyle \leqslant \inf_{\varphi \, \in \, \OPN{Iso}(X, \, d)}d(\varphi(x), \, y) \ + \inf_{\psi \, \in \, \OPN{Iso}(X, \, d)}d(y, \, \psi(z)),
\]
which completes the proof.
\end{proof}

Basing on Theorem \ref{main} and transfinite induction, for every compact metric space we are able to define its iso--derivative of order \(\alpha\) for any ordinal number \(\alpha\).
To do so, we will have to define at the same~time the~canonical projections between our derivatives.
As a result, we will eventually get the transfinite sequence of isometric derivatives of a compact metric space together with canonical projections  associated~with those~derivatives.
In particular, both sets -- of derivatives and of canonical projections -- will together form a direct system in \(\mathsf{cmn}\) indexed by ordinal numbers.

\begin{deff}
The transfinite sequence of \emph{iso--derivatives} \(\{(\puo{X}{\alpha}\),~\(\puo{d}{\alpha})\}_{\alpha \geqslant 0}\), or \(\{(X\), \(d)^{(\alpha)}\}_{\alpha \geqslant 0}\), of~a~compact metric space \((X\),~\(d)\), and the set of canonical projections \(\{\pi_{\beta}^{\gamma} \colon \puo{X}{\beta} \longrightarrow~\puo{X}{\gamma}\}_{\beta\leqslant \gamma \leqslant \alpha}\) are defined in~the~way that follows.
\begin{enumerate}
\item If \(\alpha = 0\), then  \((X\), \(d)^{(0)} \defeq (X\), \(d)\) and \(\pi_0^0 \defeq \OPN{id}_X\).
\end{enumerate}

Assume now that we have already defined iso--derivatives \(\{(\puo{X}{\beta}\), \(\puo{d}{\beta})\}_{\beta \leqslant \alpha}\) and canonical projections \(\{\pi_{\beta}^{\gamma} \colon \puo{X}{\beta} \longrightarrow \puo{X}{\gamma}\}_{\beta \leqslant \gamma\leqslant \alpha}\) for some ordinal \(\alpha\).
\begin{enumerate}
\setItemnumber{2}
\item We set \((X\), \(d)^{(\alpha + 1)} \defeq (\puo{X}{\alpha}\), \(\puo{d}{\alpha})^{(1)}\) together with \(\pi_{\beta}^{\alpha + 1} \defeq \puo{\pi_{\puo{X}{\alpha}}}{1} \circ \pi_{\beta}^{\alpha}\) for \(\beta \leqslant \alpha\).
Of course, we also set \(\pi_{\alpha + 1}^{\alpha + 1} \defeq \OPN{id}_{\puo{X}{\alpha + 1}}\).
\end{enumerate}

Let \(\alpha\) be a limit ordinal.
Assume that we have already managed to define iso--derivatives \(\{(\puo{X}{\beta}\), \(\puo{d}{\beta})\}_{\beta < \alpha}\) together with projections \(\{\pi_{\beta}^{\gamma} \colon \puo{X}{\beta} \longrightarrow \puo{X}{\gamma}\}_{\beta \leqslant \gamma < \alpha}\).

\begin{enumerate}
\setItemnumber{3}
\item \label{iso-derivative direct system 3} Basing on Theorem \ref{main}, we define the pair \(((X\), \(d)^{(\alpha)}\), \(\{\pi_{\beta}^{\alpha} \colon \puo{X}{\beta} \longrightarrow \puo{X}{\alpha}\}_{\beta < \alpha})\) to be the~inductive limit of~the~direct system \((\{(\puo{X}{\beta}\), \(\puo{d}{\beta})\}_{\beta < \alpha}\), \(\{\pi_{\beta}^{\gamma}\}_{\beta \leqslant \gamma < \alpha})\),
and we define \(\pi_{\alpha}^{\alpha} \defeq \OPN{id}_{\puo{X}{\alpha}}\). 
\end{enumerate}
\end{deff}

\begin{note}
It is obvious that all the mappings occuring in the above definition are surjective morphisms in~\(\mathsf{cmn}\).
Simultaneously, by the way they are defined, in case of the first limit ordinal \(\omega\), we indeed deal in (\ref{iso-derivative direct system 3}) with~the~direct system which fulfills the assumptions of the Theorem \ref{main}.
In addition to that, the same theorem guarantees us that, in case of any other limit ordinal \(\alpha\), we always have in (\ref{iso-derivative direct system 3}) the direct system which fulfills the assumptions of the Theorem \ref{main}.
Therefore, iso--derivative of any order is well defined, and the transfinite sequence of iso--derivatives together with all the canonical mappings form a direct system in~the~category \(\mathsf{cmn}\).
In particular, what was established in the proof of Theorem \ref{main} and is worth to highlight is~that for every limit ordinal \(\alpha\) and \(X \in \mathfrak{GH}\), the iso--derivative \(\puo{X}{\alpha}\) is in fact a Gromov--Hausdorff limit of~the~sequence \(\{\puo{X}{\beta}\}_{\beta < \alpha}\).

For any limit ordinal \(\alpha\), the space \(\puo{X}{\alpha}\) is not naturally or canonically defined as set--theoretic object, but~it~is selected up to isometry.
Nevertheless, regardless of this choice, the space \(\puo{X}{\beta}\) for every ordinal \(\beta\) (even for \(\beta\) being successor ordinal) is a representative for the same element of \(\mathfrak{GH}\) space that~depends~only~on~\(X\) and \(\beta\).
\end{note}

Given a metric space \((X\), \(d)\), there is an interesting property of the canonical projections related to its iso--derivatives.

\begin{prop} \label{stabilize-so-iso}
Let \(\alpha < \beta\) be ordinals such that \(\puo{X}{\alpha}\) and \(\puo{X}{\beta}\) are isometric.
Then \(\pi_{\alpha}^{\beta}\) is an isometry.
Moreover, \(\pi_{\alpha}^{\alpha + 1}\) is an isometry.
\end{prop}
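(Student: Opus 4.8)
The plan is to reduce both claims to the folklore fact — already used above, e.g. in the proof of Lemma~\ref{inverse limit - main} — that a non--expansive surjection of a compact metric space onto itself is automatically an isometry. (If one wishes to recall the short argument: the iterates of such a map form an equi--Lipschitz family, so by Arzel\'{a}--Ascoli some subnet $h^{n_\lambda}$ converges uniformly, whence the ``gaps'' $h^{n_{\lambda+1} - n_\lambda}$ converge to the identity on the limit set, forcing $h$ to preserve distances.) Beyond this, I would only use that every canonical projection is a non--expansive surjection in $\mathsf{cmn}$ and that $\pi_{\alpha}^{\gamma} = \pi_{\beta}^{\gamma} \circ \pi_{\alpha}^{\beta}$ for $\alpha \leqslant \beta \leqslant \gamma$, which is exactly the compatibility condition of the direct system of iso--derivatives.

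For the first assertion, fix a distance--preserving bijection $g \colon \puo{X}{\beta} \longrightarrow \puo{X}{\alpha}$, which exists since $\puo{X}{\alpha}$ and $\puo{X}{\beta}$ are isometric. As $g$ is in particular non--expansive, $g \circ \pi_{\alpha}^{\beta} \colon \puo{X}{\alpha} \longrightarrow \puo{X}{\alpha}$ is a non--expansive surjection of the compact metric space $\puo{X}{\alpha}$ onto itself, hence an isometry by the fact quoted above. Therefore, for all $u$, $v \in \puo{X}{\alpha}$,
\[
\puo{d}{\beta}\left(\pi_{\alpha}^{\beta}(u), \, \pi_{\alpha}^{\beta}(v)\right) = \puo{d}{\alpha}\left(g\left(\pi_{\alpha}^{\beta}(u)\right), \, g\left(\pi_{\alpha}^{\beta}(v)\right)\right) = \puo{d}{\alpha}(u, \, v),
\]
the first equality because $g$ preserves distances and the second because $g \circ \pi_{\alpha}^{\beta}$ does. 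Thus $\pi_{\alpha}^{\beta}$ preserves distances; being also surjective, it is an isometry.

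For the ``moreover'' part, write $\pi_{\alpha}^{\beta} = \pi_{\alpha+1}^{\beta} \circ \pi_{\alpha}^{\alpha+1}$ (legitimate since $\alpha + 1 \leqslant \beta$). Using that $\pi_{\alpha}^{\alpha+1}$ and $\pi_{\alpha+1}^{\beta}$ are non--expansive and that $\pi_{\alpha}^{\beta}$ is an isometry by the first part, for all $u$, $v \in \puo{X}{\alpha}$ we get
\[
\puo{d}{\alpha}(u, \, v) = \puo{d}{\beta}\left(\pi_{\alpha}^{\beta}(u), \, \pi_{\alpha}^{\beta}(v)\right) \leqslant \puo{d}{\alpha+1}\left(\pi_{\alpha}^{\alpha+1}(u), \, \pi_{\alpha}^{\alpha+1}(v)\right) \leqslant \puo{d}{\alpha}(u, \, v),
\]
so equality holds throughout and $\pi_{\alpha}^{\alpha+1}$ preserves distances; since it is surjective, it is an isometry. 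The only non--routine ingredient is the self--map folklore fact (which the paper already treats as known), and everything else is a two--line verification, so I do not anticipate a genuine obstacle here.
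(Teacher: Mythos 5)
Your proposal is correct and follows essentially the same route as the paper: the author also deduces that \(\pi_{\alpha}^{\beta}\) preserves distances because it is a non--expansive surjection between isometric compact metric spaces (the same folklore fact you invoke, just stated without passing through the self--map \(g \circ \pi_{\alpha}^{\beta}\)), and then obtains the ``moreover'' part from the factorization \(\pi_{\alpha}^{\beta} = \pi_{\alpha + 1}^{\beta} \circ \pi_{\alpha}^{\alpha + 1}\) exactly as in your sandwich argument. Your write--up is merely more explicit about the two reductions.
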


\begin{proof}
Of course, \(\pi_{\alpha}^{\beta}\) is an isometry as a nonexpansive mapping between isometric compact metric spaces.
But \(\pi_{\alpha}^{\beta} = \pi_{\alpha + 1}^{\beta} \circ \pi_{\alpha}^{\alpha + 1}\), so both \(\pi_{\alpha + 1}^{\beta}\) and \(\pi_{\alpha}^{\alpha + 1}\) must preserve distances.
\end{proof}

The immediate conclusion from this proposition is that for any compact metric space \((X\), \(d)\), its transfinite sequence of isometric derivatives \((X\), \(d)^{(\alpha)}\) must stabilize at some moment.
What is more, this moment is~a~countable ordinal.

\begin{prop} \(\) \label{prop before def of iht}
\begin{enumerate}
\item \label{countable heights}
For every compact metric space \((X\), \(d)\), there is a countable ordinal \(\alpha\) such that \(\pi_{\alpha}^{\alpha + 1}\) is an isometry.
\item \label{trivial Iso group}
If \((X\), \(d)\) and \(\alpha\) are as above, then for all ordinals \(\alpha \leqslant \beta \leqslant \gamma\), the canonical projection \(\pi_{\beta}^{\gamma}\) is~an~isometry.
In particular, \((\puo{X}{\alpha}\), \(\puo{d}{\alpha})\) is iso--rigid.
\end{enumerate}
\end{prop}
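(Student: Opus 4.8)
The plan is to obtain (\ref{countable heights}) by compressing the whole transfinite sequence of iso--derivatives into a single non--increasing real function on the ordinals, and then to bootstrap (\ref{trivial Iso group}) from (\ref{countable heights}) by transfinite induction, using Proposition~\ref{stabilize-so-iso} together with the fact -- already invoked in its proof -- that a non--expansive surjection between isometric compact metric spaces is an isometry. First I would fix a countable dense subset \(D\) of \((X,d)\) and enumerate \(D\times D\) as \(\{(p_n,q_n)\}_{n\in\mathbb{N}}\). Since every canonical projection \(\pi_0^{\alpha}\colon X\to\puo{X}{\alpha}\) is a non--expansive surjection and \(\pi_{\alpha}^{\beta}\circ\pi_0^{\alpha}=\pi_0^{\beta}\) for \(\alpha\leqslant\beta\), the functions \(d_\alpha\defeq\puo{d}{\alpha}\circ(\pi_0^{\alpha}\times\pi_0^{\alpha})\) are continuous pseudometrics on \(X\) with \(d_\beta\leqslant d_\alpha\) whenever \(\alpha\leqslant\beta\). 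Then I would set
\[
\displaystyle F(\alpha)\defeq\sum_{n=1}^{\infty}2^{-n}\,\frac{d_\alpha(p_n,q_n)}{1+d_\alpha(p_n,q_n)}\in\mathbb{R}_{+}.
\]
Because \(t\mapsto\tfrac{t}{1+t}\) is increasing and injective on \(\mathbb{R}_+\), the function \(F\) is non--increasing, and \(F(\alpha)=F(\alpha+1)\) forces \(d_\alpha(p_n,q_n)=d_{\alpha+1}(p_n,q_n)\) for every \(n\), hence (by continuity of \(d_\alpha\), \(d_{\alpha+1}\) and density of \(D\)) \(d_\alpha=d_{\alpha+1}\) on all of \(X\times X\), hence (by surjectivity of \(\pi_0^{\alpha}\)) that \(\pi_{\alpha}^{\alpha+1}\) is an isometry. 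On the set \(T=\{\alpha:F(\alpha)>F(\alpha+1)\}\) the function \(F\) is strictly decreasing, so \(T\) is countable -- an order type \(\geqslant\omega_1\) would produce \(\omega_1\) pairwise disjoint nonempty open intervals \((F(\xi+1),F(\xi))\) in \(\mathbb{R}\) -- and therefore the least \(\gamma\notin T\) satisfies \([0,\gamma)\subseteq T\), so \(\gamma<\omega_1\) and \(\pi_{\gamma}^{\gamma+1}\) is an isometry, which is (\ref{countable heights}).

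For (\ref{trivial Iso group}) I would first record the elementary remarks that a compact metric space \(Z\) is iso--rigid exactly when \(\puo{\pi}{1}_Z\) is an isometry (a distance--preserving surjection onto a metric space is automatically a bijective isometry), that iso--rigidity is an isometric invariant, and that \(\pi_{\alpha}^{\alpha+1}\) is precisely \(\puo{\pi_{\puo{X}{\alpha}}}{1}\); thus the hypothesis says \(\puo{X}{\alpha}\) is iso--rigid. Then I would prove by transfinite induction on \(\gamma\geqslant\alpha\) that \(\pi_{\alpha}^{\gamma}\) is an isometry and \(\puo{X}{\gamma}\) is iso--rigid. The base case \(\gamma=\alpha\) is the hypothesis. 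For \(\gamma=\delta+1>\alpha\): \(\puo{X}{\delta}\) is iso--rigid by induction, so \(\pi_{\delta}^{\delta+1}=\puo{\pi_{\puo{X}{\delta}}}{1}\) is an isometry, \(\puo{X}{\delta+1}=(\puo{X}{\delta})^{(1)}\) is isometric to \(\puo{X}{\delta}\) and hence iso--rigid, and \(\pi_{\alpha}^{\delta+1}=\pi_{\delta}^{\delta+1}\circ\pi_{\alpha}^{\delta}\) is an isometry. For a limit \(\gamma>\alpha\): \(\puo{X}{\delta}\) is isometric to \(\puo{X}{\alpha}\) for all \(\alpha\leqslant\delta<\gamma\) by induction, so the net \(\{\puo{X}{\delta}\}_{\delta<\gamma}\) is eventually constant in \(\mathfrak{GH}\) with Gromov--Hausdorff limit \(\puo{X}{\alpha}\); since by point~(\ref{iso-derivative direct system 3}) of the definition (via Theorem~\ref{main}) \(\puo{X}{\gamma}\) is that limit, \(\puo{X}{\gamma}\) is isometric to \(\puo{X}{\alpha}\) and thus iso--rigid, and the non--expansive surjection \(\pi_{\alpha}^{\gamma}\colon\puo{X}{\alpha}\to\puo{X}{\gamma}\) between isometric compact metric spaces is an isometry. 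Finally, for \(\alpha\leqslant\beta\leqslant\gamma\) one reads off from \(\pi_{\alpha}^{\gamma}=\pi_{\beta}^{\gamma}\circ\pi_{\alpha}^{\beta}\), with \(\pi_{\alpha}^{\gamma}\) and \(\pi_{\alpha}^{\beta}\) bijective isometries, that \(\pi_{\beta}^{\gamma}\) is an isometry; and \(\puo{X}{\alpha}\) is iso--rigid by the base case.

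The only genuinely non--formal step is the countability in (\ref{countable heights}), and once the monotone scalar \(F\) is in place it is immediate; everything else -- convergence and monotonicity of the series, continuity of \(d_\alpha\), the fact about strictly decreasing ordinal--indexed sequences of reals, and the transfinite bookkeeping -- is routine. The one point that deserves a moment's attention is the limit step of (\ref{trivial Iso group}), where the eventually constant net must be fed into the Gromov--Hausdorff limit construction of point~(\ref{iso-derivative direct system 3}); but that is exactly what that construction and Theorem~\ref{main} provide, so I do not expect a serious obstacle.
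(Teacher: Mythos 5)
Your proposal is correct, and for part (\ref{countable heights}) it takes a genuinely different route from the paper. The paper argues ``from above'': it forms the uncountable derivative \(\puo{X}{\Omega}\) (\(\Omega\) the first uncountable ordinal), uses the Gromov--Hausdorff convergence of the net of derivatives to pick countable ordinals \(\alpha_n\) with \(\puo{X}{\alpha_n}\) within \(\frac{1}{n}\) of \(\puo{X}{\Omega}\), sets \(\alpha = \sup_n \alpha_n < \Omega\), identifies \(\puo{X}{\alpha}\) with \(\puo{X}{\Omega}\) as the common \(\mathfrak{GH}\)--limit, and then invokes Proposition~\ref{stabilize-so-iso}. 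You instead argue ``from below'': you pull each \(\puo{d}{\alpha}\) back to a continuous pseudometric \(d_\alpha\) on \(X\) via the surjections \(\pi_0^\alpha\), pack the values on a countable dense set into a single non--increasing real \(F(\alpha)\), check that \(F(\alpha)=F(\alpha+1)\) forces \(\pi_\alpha^{\alpha+1}\) to be an isometry (correct: term-by-term equality of the series, continuity plus density, then surjectivity of \(\pi_0^\alpha\)), and conclude by the classical fact that a strictly decreasing ordinal--indexed family of reals is countable. This buys you an argument that never touches \(\puo{X}{\Omega}\) or rates of Gromov--Hausdorff convergence -- only the monotone family \(\{d_\alpha\}\) -- though both proofs still need Theorem~\ref{main} to define the derivatives at limit stages in the first place. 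For part (\ref{trivial Iso group}) the paper simply cites Proposition~\ref{stabilize-so-iso}; your transfinite induction (successor step via iso--rigidity being ``\(\puo{\pi}{1}\) is an isometry'' and an isometric invariant; limit step via the \(\mathfrak{GH}\)--limit of an eventually constant net; composition identity to recover \(\pi_\beta^\gamma\)) is an expanded but faithful version of the same idea, and in fact makes explicit the limit--ordinal step that the paper's one--line deduction leaves implicit.
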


\begin{proof}
To prove (\ref{countable heights}), fix a compact metric space \((X\), \(d)\).
Let \(\Omega\) be the first uncountable ordinal number. \linebreak
Let \(\alpha_1\) be the~first~countable ordinal such that \(\puo{X}{\alpha_1} \in B_{d_{GH}}(\puo{X}{\Omega}\),~\(1)\).
Assume that we have already chosen countable ordinals \(\row{\alpha}{n}\) for~which~\(\puo{X}{\alpha_j} \in~B_{d_{GH}}(\puo{X}{\Omega}\),~\(\frac{1}{j})\) for~any~\(j \leqslant~n\).
Set \(\alpha_{n + 1}\) to~be the~smallest countable ordinal such~that~\(\alpha_{n + 1} > \alpha_{j}\) for \(j \leqslant n\) and \(\puo{X}{\alpha_{n + 1}} \in~B_{d_{GH}}(\puo{X}{\Omega}\),~\(\frac{1}{n + 1})\).
Note that in~this~situation \(\alpha \defeq \sup_{n \in \mathbb{N}}\alpha_n < \Omega\).
But \(\alpha\) is a limit ordinal, so, by the definition of iso--derivative and Theorem~\ref{main}, \(\puo{X}{\alpha_n} \longrightarrow \puo{X}{\alpha}\) in \(\mathfrak{GH}\).
However, by the definition of~\(\{\alpha_n\}_{n \in \mathbb{N}}\), \(\puo{X}{\alpha_n} \longrightarrow \puo{X}{\Omega}\) in \(\mathfrak{GH}\), and thus \(\puo{X}{\alpha}\) and \(\puo{X}{\Omega}\) are isometric.
Proposition~\ref{stabilize-so-iso} shows that \(\pi_{\alpha}^{\alpha + 1}\) is an isometry.

The point (\ref{trivial Iso group}) is a direct consequence of the previous proposition.
\end{proof}

The above propositions enable us to introduce a concept similar to that of the Cantor--Bendixson height (cf. \cite[page \(59\)]{Engelking}).

\begin{deff} \label{iso-height}
Let \((X\), \(d)\) be a compact metric space.
Define \emph{iso--height} of \((X\), \(d)\) to be the ordinal
\[
\displaystyle \OPN{iht}(X, \, d) \defeq \min\left\{\alpha \colon \ \pi_{\alpha}^{\alpha + 1}\textnormal{ is an isometry}\right\}.
\]
\end{deff}

The Proposition \ref{prop before def of iht} tells us that the transfinite sequence of countable iso--derivatives of any compact metric space must be eventually constant in the Gromov--Hausdorff world, so the above definition makes sense and \(\OPN{iht}\) is always a countable ordinal.
Furthermore, it turns out that any countable ordinal is iso--height of some compact metric space.

For the proof of this fact, the general property concerning the convergence in \(\mathfrak{GH}\) of a special kind of nets of disjoint unions of compact metric spaces will be useful.

\begin{prop} \label{conv of disjoint sums}
Let \(\{(X_\sigma\), \(d_\sigma)\}_{\sigma \in \Sigma}\) and \(\{(Y_\sigma\), \(\varrho_\sigma)\}_{\sigma \in \Sigma}\) be nets of compact metric spaces in \(\mathfrak{GH}\) converging to, respectively, \((X\), \(d)\) and \((Y\), \(\varrho)\).
For every \(\varepsilon > 0\) there exist \(\sigma_0 \in \Sigma\) and \(r > 0\) such that for every \(\sigma \in \Sigma\) satisfying \(\sigma \geqslant \sigma_0\), the~symmetric mapping \(\zeta_\sigma \colon (X_\sigma \sqcup Y_\sigma) \times (X_\sigma \sqcup Y_\sigma) \longrightarrow \mathbb{R}_{+}\) that extends the metrics \(d_\sigma\)~and~\(\varrho_\sigma\), and such that \(\zeta_\sigma(x_\sigma\), \(y_\sigma) = r\) for every \(x_\sigma \in X_\sigma\) and \(y_\sigma \in Y_\sigma\), is a metric.\\
Moreover, in the above, \(r\) can be chosen in such a way that
\[
\displaystyle (X_\sigma \sqcup Y_\sigma, \, \zeta_\sigma) \overset{\sigma \geqslant \sigma_0}{\longrightarrow} (X \sqcup Y, \, \zeta) \textnormal{ in } \mathfrak{GH},
\]
where \(\zeta\) is an admissible metric on \(X \sqcup Y\) with parameter \(r\).
\end{prop}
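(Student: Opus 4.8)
The plan is to first pin down the cross–parameter $r$ using only the two limit spaces, then to check the triangle inequality in two stages: for $\zeta_\sigma$ itself, and then for a larger metric on a fourfold disjoint union that exhibits the Gromov--Hausdorff closeness.

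First I would recall that diameters vary continuously along Gromov--Hausdorff convergent nets, i.e. $|\OPN{diam}_{d_\sigma} X_\sigma - \OPN{diam}_d X| \leqslant 2\, d_{GH}(X_\sigma, X) \to 0$, and likewise for the $Y$'s. So, given $\varepsilon > 0$, I would pick $\sigma_0 \in \Sigma$ so that $\OPN{diam}_{d_\sigma} X_\sigma < \OPN{diam}_d X + \varepsilon$ and $\OPN{diam}_{\varrho_\sigma} Y_\sigma < \OPN{diam}_\varrho Y + \varepsilon$ for all $\sigma \geqslant \sigma_0$, and set
\[
r \defeq \tfrac12\max\{\OPN{diam}_d X, \, \OPN{diam}_\varrho Y\} + \varepsilon.
\]
Then $2r \geqslant \OPN{diam}_{d_\sigma} X_\sigma$ and $2r \geqslant \OPN{diam}_{\varrho_\sigma} Y_\sigma$ for $\sigma \geqslant \sigma_0$, and also $r \geqslant \tfrac12\max\{\OPN{diam}_d X, \OPN{diam}_\varrho Y\}$. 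Checking that $\zeta_\sigma$ is a metric then reduces to the triangle inequality, whose only non-immediate instances are, for $x_\sigma, x_\sigma' \in X_\sigma$ and $y_\sigma \in Y_\sigma$, the bound $d_\sigma(x_\sigma, x_\sigma') \leqslant \zeta_\sigma(x_\sigma, y_\sigma) + \zeta_\sigma(y_\sigma, x_\sigma') = 2r$ (and the symmetric statement with $X_\sigma$ and $Y_\sigma$ exchanged); every other instance involves a cross-distance equal to $r$, which is trivially sandwiched between $0$ and the sum of the other two sides. The same computation shows that the map $\zeta$ with cross-distance $r$ is the admissible metric on $X \sqcup Y$ with parameter $r$.

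For the ``moreover'' part I would argue the Gromov--Hausdorff convergence directly. Fix $\delta \in (0, \varepsilon)$; since convergence for all small $\delta$ suffices, it is enough to produce $\sigma_1 \geqslant \sigma_0$ with $d_{GH}\big((X_\sigma \sqcup Y_\sigma, \zeta_\sigma), (X \sqcup Y, \zeta)\big) < \delta$ for $\sigma \geqslant \sigma_1$. Enlarge $\sigma_0$ so that in addition $d_{GH}(X_\sigma, X) < \delta$ and $d_{GH}(Y_\sigma, Y) < \delta$, and fix admissible metrics $e_\sigma$ on $X_\sigma \sqcup X$ and $f_\sigma$ on $Y_\sigma \sqcup Y$ realizing Hausdorff distance $< \delta$. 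On $W_\sigma \defeq X_\sigma \sqcup Y_\sigma \sqcup X \sqcup Y$ I would define the symmetric map $\theta_\sigma$ that restricts to $\zeta_\sigma$ on $X_\sigma \sqcup Y_\sigma$, to $\zeta$ on $X \sqcup Y$, to $e_\sigma$ on $X_\sigma \sqcup X$, to $f_\sigma$ on $Y_\sigma \sqcup Y$, and equals $r$ on every remaining pair (those between $X_\sigma$ and $Y$, and between $Y_\sigma$ and $X$). The key auxiliary estimate is $\OPN{diam}(X_\sigma \sqcup X, e_\sigma) \leqslant \max\{\OPN{diam}_{d_\sigma} X_\sigma, \OPN{diam}_d X\} + \delta < 2r$, and likewise for $f_\sigma$. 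With this in hand, a case analysis over which of the four patches each of the three vertices lies in shows that $\theta_\sigma$ satisfies the triangle inequality: within a single patch it is that patch's own inequality; when an $e_\sigma$- or $f_\sigma$-distance must be bounded by a sum of two cross-distances $r$, one uses the diameter estimate $< 2r$; when a patch-distance must be bounded by $2r$ one uses $\OPN{diam} \leqslant 2r$; and every instance in which a cross-distance $r$ appears on the short side is trivial. Hence $\theta_\sigma$ is an admissible metric on $(X_\sigma \sqcup Y_\sigma) \sqcup (X \sqcup Y)$, and since every point of $X_\sigma$ is $e_\sigma$-close to $X$, every point of $Y_\sigma$ is $f_\sigma$-close to $Y$, and conversely, the $\theta_\sigma$-Hausdorff distance between $X_\sigma \sqcup Y_\sigma$ and $X \sqcup Y$ is $< \delta$, which gives the claimed bound.

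The step I expect to be the main obstacle is precisely the verification that $\theta_\sigma$ is a metric, i.e. the consistency of the fourfold gluing: one must rule out a triangle inequality being violated by a path that leaves $X_\sigma \sqcup Y_\sigma$ and re-enters it, and this works \emph{only} because $r$ was chosen to be at least half of the (slightly enlarged) diameters, so that constant cross-distances $r$ can never shorten a path across the gluing below a genuine intra-space distance. In other words, the choice of $r$ made in the first part is not merely convenient but is exactly what makes the ``moreover'' part go through; everything else is bookkeeping with the triangle inequality and the continuity of diameters.
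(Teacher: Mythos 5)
Your proof is correct and follows essentially the same route as the paper's: choose $r$ slightly larger than half the maximum of the limit diameters (so that $2r$ dominates all diameters for $\sigma \geqslant \sigma_0$), and for the convergence glue the two admissible metrics realizing $d_{GH}(X_\sigma, X) < \delta$ and $d_{GH}(Y_\sigma, Y) < \delta$ across the fourfold union with constant cross-distance $r$. The paper states this construction in one sentence; your case analysis for the triangle inequality on $\theta_\sigma$ is exactly the verification it leaves implicit.
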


\begin{proof}
Fix \(\varepsilon > 0\) and find \(\sigma_0 \in \Sigma\) such that, for any \(\sigma \geqslant \sigma_0\), both the numbers \(d_{GH}(X_\sigma\),~\(X)\) and \(d_{GH}(Y_\sigma\),~\(Y)\) are smaller than \(\varepsilon\).
It follows that \(\sup_{\sigma \geqslant \sigma_0}\OPN{diam}_{d_\sigma}X_\sigma\) and \(\sup_{\sigma \geqslant \sigma_0}\OPN{diam}_{\varrho_\sigma}Y_\sigma\) are both not greater than \(\varepsilon + \max\{\OPN{diam}_{d}X\), \(\OPN{diam}_{\varrho}Y\}\).
To finish the proof of the first part of the proposition, it is enough to fix \(r \geqslant \frac{1}{2}(\varepsilon + \max\{\OPN{diam}_{d}X\), \(\OPN{diam}_{\varrho}Y\})\).

For the proof of the ``moreover'' part of the proposition, fix additionally \(\sigma \geqslant \sigma_0\).
By the definition of~the~Gromov--Hausdorff distance, one can find admissible metrics \(d_\sigma'\), \(\varrho_\sigma'\) on, respectively, \(X_\sigma \sqcup X\), \(Y_\sigma \sqcup~Y\) such that \((d_\sigma')_H(X_\sigma\), \(X)  < \varepsilon\), and \((\varrho_\sigma')_H(Y_\sigma\), \(Y) < \varepsilon\).
To finish the proof, it is enough to consider the~disjoint~union of these spaces with an admissible metric \(\zeta_\sigma'\) with parameter \(r\), and compute the Hausdorff distance between~the~appropriate pieces.
\end{proof}

\begin{thm} \label{all of countable heights}
For any countable ordinal \(\alpha\), there exists a compact metric space \((X\),~\(d)\) for which 
\[
\OPN{iht}(X, \, d) = \alpha.
\]
\end{thm}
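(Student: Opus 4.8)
The plan is to build, by transfinite recursion on the countable ordinal $\alpha$, not a single space but a \emph{matched pair} $(P_\alpha,Q_\alpha)$ of compact metric spaces such that (a) $\OPN{iht}(P_\alpha)=\OPN{iht}(Q_\alpha)=\alpha$; (b) $\puo{P_\alpha}{\gamma}$ and $\puo{Q_\alpha}{\gamma}$ are not isometric for every $\gamma<\alpha$; and (c) $\puo{P_\alpha}{\alpha}$ and $\puo{Q_\alpha}{\alpha}$ are isometric. Taking $X:=P_\alpha$ then proves the theorem. Conditions (b)--(c) are not decorative: the hard part will be the successor step, because attaching ``easy'' pieces to a space of iso-height $\beta$ can only produce iso-height $\max(\beta,\cdot)$ (disjoint unions) or $1+\beta$ (thickenings), never $\beta+1$ when $\beta$ is infinite, so one must force a fresh collapse to become available \emph{precisely} at the $\beta$-th iso-derivative and not before. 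A matched pair whose two members become isometric only after $\beta$ derivatives is exactly the device that achieves this.

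The only construction I would use is a gluing. For compact metric spaces $A$, $B$ and a sufficiently large real $s$, let $A\sqcup_s B$ be their disjoint union with the metric extending those of $A$ and $B$ and equal to $s$ on all cross pairs; this is an admissible metric with a single parameter, and $A\sqcup_s B$ is compact. From the explicit formula for $\puo{d}{1}$ in Proposition~\ref{d1} together with a direct inspection of isometries one obtains: if $A\not\cong B$ then $\OPN{Iso}(A\sqcup_s B)=\OPN{Iso}(A)\times\OPN{Iso}(B)$ and $\puo{(A\sqcup_s B)}{1}\cong\puo{A}{1}\sqcup_s\puo{B}{1}$, whereas if $A\cong B$ and both are iso-rigid then $A\sqcup_s B$ is \emph{not} iso-rigid and $\puo{(A\sqcup_s B)}{1}\cong A$ (the swap identifies the two copies). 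The same analysis applies to a countable gluing $\{*\}\sqcup\bigsqcup_n c_nA_n$ (an admissible metric with parameters): if the positive scalars $c_n\to 0$ and the parameters are chosen generically, no isometry permutes or merges the pieces at any iso-derivative stage, so $\OPN{Iso}=\prod_n\OPN{Iso}(A_n)$ and the iso-derivative acts piecewise. Since the iso-derivative tower passes through limit ordinals, where $\puo{X}{\lambda}$ is a Gromov--Hausdorff limit of the earlier derivatives (Theorem~\ref{main}), one also needs that these gluings commute with such limits, and this is exactly Proposition~\ref{conv of disjoint sums}.

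For the base case $\alpha=0$ I would take $P_0=Q_0:=C$, a finite metric space with pairwise distinct distances, e.g. $\{0,1,3\}\subset\mathbb{R}$; then $\OPN{Iso}(C)$ is trivial, so $\puo{C}{1}=C$ by Proposition~\ref{d1}, giving $\OPN{iht}(C)=0$, and (b)--(c) are trivial. For a limit ordinal $\alpha$, fix $\beta_n\nearrow\alpha$ with $\beta_n\geq 1$, choose admissible scalars and parameters (the \emph{same} ones for both members of the pair, generic as above) and set $P_\alpha:=\{*\}\sqcup\bigsqcup_n c_nP_{\beta_n}$, $Q_\alpha:=\{*\}\sqcup\bigsqcup_n c_nQ_{\beta_n}$. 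Then $\puo{(P_\alpha)}{\gamma}\cong\{*\}\sqcup\bigsqcup_n c_n\puo{(P_{\beta_n})}{\gamma}$, which is iso-rigid iff every $\puo{(P_{\beta_n})}{\gamma}$ is, i.e. iff $\gamma\geq\beta_n$ for all $n$, i.e. iff $\gamma\geq\alpha$; hence $\OPN{iht}(P_\alpha)=\alpha$, and likewise for $Q_\alpha$. For $\gamma<\alpha$ choose $\beta_n>\gamma$; then $\puo{(P_{\beta_n})}{\gamma}\not\cong\puo{(Q_{\beta_n})}{\gamma}$ by (b) at stage $\beta_n$, so the $n$-th pieces differ and (b) holds at stage $\alpha$; finally (c) holds because $\puo{(P_{\beta_n})}{\beta_n}\cong\puo{(Q_{\beta_n})}{\beta_n}$ for all $n$ by (c) at stage $\beta_n$ and the admissible data were chosen identically.

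For the successor case $\alpha=\beta+1$, fix large reals $s\neq s'$ and put $P_{\beta+1}:=P_\beta\sqcup_s Q_\beta$ and $Q_{\beta+1}:=P_\beta\sqcup_{s'}Q_\beta$. By (b), $\puo{(P_\beta)}{\gamma}\not\cong\puo{(Q_\beta)}{\gamma}$ for $\gamma<\beta$, so iterating the first gluing identity -- and, at limit stages, invoking Proposition~\ref{conv of disjoint sums} -- gives $\puo{(P_{\beta+1})}{\gamma}\cong\puo{(P_\beta)}{\gamma}\sqcup_s\puo{(Q_\beta)}{\gamma}$ for all $\gamma\leq\beta$; none of these is iso-rigid, since $\puo{(P_\beta)}{\gamma}$ is not for $\gamma<\beta$. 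At $\gamma=\beta$, condition (c) makes the two summands isometric and iso-rigid, so $\puo{(P_{\beta+1})}{\beta}$ is a gluing of two isometric iso-rigid spaces: it carries a nontrivial swap, hence is not iso-rigid, while its iso-derivative $\puo{(P_{\beta+1})}{\beta+1}\cong\puo{(P_\beta)}{\beta}$ \emph{is} iso-rigid. Therefore $\OPN{iht}(P_{\beta+1})=\beta+1$, and symmetrically $\OPN{iht}(Q_{\beta+1})=\beta+1$. Condition (b) at stage $\beta+1$ holds because for every $\gamma\leq\beta$ the spaces $\puo{(P_{\beta+1})}{\gamma}$ and $\puo{(Q_{\beta+1})}{\gamma}$ have diameters $s$ and $s'$ respectively and so are not isometric, and (c) holds since both $\puo{(P_{\beta+1})}{\beta+1}$ and $\puo{(Q_{\beta+1})}{\beta+1}$ are isometric to $\puo{(P_\beta)}{\beta}$. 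This closes the recursion, and $X:=P_\alpha$ is the desired space. As indicated, the main obstacle is the successor step; the rest is bookkeeping resting on Theorem~\ref{main}, Proposition~\ref{d1} and Proposition~\ref{conv of disjoint sums}.
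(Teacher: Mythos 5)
Your proposal is correct in its essential mechanism and reaches the same destination, but it organizes the successor step quite differently from the paper, at the cost of extra machinery. The paper's successor step takes a space $X$ with $\OPN{iht}(X,d)=\alpha$ and forms $Y=X\sqcup\puo{X}{\alpha}$ with one admissible parameter $r>\OPN{diam}_d X$: by Proposition~\ref{stabilize-so-iso}, $\puo{X}{\beta}$ and $\puo{X}{\alpha}$ cannot be isometric for $\beta<\alpha$ (otherwise the tower would have stabilized at $\beta$), so the iso--derivative acts piecewise until stage $\alpha$, where the two pieces coincide, the swap appears, and $\puo{Y}{\alpha+1}=\puo{X}{\alpha}$ is iso--rigid. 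In other words, $X$ and its own terminal derivative already form a pair that is ``non--isometric at every stage $\gamma<\alpha$ and isometric at stage $\alpha$'' --- exactly the property you engineer by hand with your matched pair $(P_\alpha,Q_\alpha)$ and the invariants (b)--(c). Your device is valid and self--consistent (the diameters $s\neq s'$ do propagate through the derivatives and keep $P_{\beta+1}$, $Q_{\beta+1}$ non--isometric at all stages $\gamma\leqslant\beta$, and the collapse at stage $\beta$ works as you describe), but it is strictly heavier bookkeeping than necessary; the paper gets the partner space for free. Incidentally, your motivating claim that disjoint unions ``can only produce iso--height $\max(\beta,\cdot)$'' is precisely what the paper's construction refutes: a union of a space of iso--height $\alpha$ with one of iso--height $0$ yields $\alpha+1$, because the two pieces become isometric at the right moment. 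Your limit step is essentially identical to the paper's (a shrinking countable union anchored by a one--point piece, with strictly decreasing parameters exceeding the corresponding diameters --- this is what your ``generic'' choice must mean, and it is what makes the sphere argument identify each piece).

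Two loose ends you should tighten if you write this up. First, maintaining invariant (b) at limit stages requires ruling out isometries \emph{between} $\puo{(P_\alpha)}{\gamma}$ and $\puo{(Q_\alpha)}{\gamma}$, not merely self--isometries; the sphere/parameter argument does give this because the two unions use identical parameters, but it must be said. Second, Proposition~\ref{conv of disjoint sums} as stated covers only two summands; for the countable union with a limit point the paper carries out a separate (though similar) Gromov--Hausdorff estimate, and your appeal to the two--piece proposition does not literally cover that case.
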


\begin{proof}
We will prove it using transfinite induction.
It is clear that \(\OPN{iht}(\{0\}) = 0\) and \(\OPN{iht}(\{0\),~\(1\}) = 1\).
Assume that for an ordinal \(\alpha > 0\) one can find a compact metric space \((X\), \(d)\) with iso--height \(\alpha\).
Basing on~this, we~will produce a compact metric space whose iso--height is \(\alpha + 1\).
Denote by \((Y\), \(\varrho)\) a compact metric space being the disjoint union \(X \sqcup \puo{X}{\alpha}\) with an admissible metric with parameter 
\(r > \OPN{diam}_{d}X\).

We will prove that if \((Y\), \(\varrho)^{(\beta)}\) is the disjoint union \(\puo{X}{\beta} \sqcup \puo{X}{\alpha}\) with an admissible metric \(\puo{\varrho}{\beta}\) with~parameter~\(r\) (here it is important that \(\alpha > 0\)) for some \(\beta < \alpha\), then the space \((Y\), \(\varrho)^{(\beta + 1)}\) has the analogous property.
For this purpose, fix \(\varphi \in \OPN{Iso}(\puo{Y}{\beta}\), \(\puo{\varrho}{\beta})\), and \(\puo{x}{\beta} \in \puo{X}{\beta}\), \(\puo{x}{\alpha} \in \puo{X}{\alpha}\).
If \(\varphi(\puo{x}{\beta}) \in \puo{X}{\alpha}\), then by~the~choice of \(r\) we must have \(\varphi(\puo{x}{\alpha}) \in \puo{X}{\beta}\), and therefore \(\varphi(\puo{X}{\beta}) = \puo{X}{\alpha}\) and \(\varphi(\puo{X}{\alpha}) = \puo{X}{\beta}\), so \(\puo{X}{\beta}\) and \(\puo{X}{\alpha}\) are to be isometric.
However, this can only be the case when \(\alpha = \beta\).
But, as \(\alpha > \beta\), we must then have \(\varphi(\puo{x}{\beta}) \in \puo{X}{\beta}\) and \(\varphi(\puo{x}{\alpha}) \in \puo{X}{\alpha}\), and thus \(\varphi(\puo{X}{\beta}) = \puo{X}{\beta}\) and \(\varphi(\puo{X}{\alpha}) = \puo{X}{\alpha}\).
By~the~Proposition \ref{d1}, it is obvious that \(\puo{\varrho}{\beta + 1}([\puo{x}{\beta}]\), \([\puo{x}{\alpha}]) = r\).
Of~course, from the~above reasoning we also infer that \(\puo{\varrho}{\beta + 1}([\puo{x_1}{\beta}]\),~\([\puo{x_2}{\beta}]) =~\puo{d}{\beta + 1}([\puo{x_1}{\beta}]\),~\([\puo{x_2}{\beta}])\), with \(\puo{x_1}{\beta}\), \(\puo{x_2}{\beta} \in \puo{X}{\beta}\).
(We remember that, by Definition \ref{iso-height}, \(\puo{X}{\alpha + 1} = \puo{X}{\alpha}\) in~\(\mathfrak{GH}\).)
Therefore, we have indeed that \((Y\),~\(\varrho)^{(\beta + 1)}\) is the disjoint sum \(\puo{X}{\beta + 1} \sqcup \puo{X}{\alpha}\) with~an~admissible metric \(\puo{\varrho}{\beta + 1}\) with parameter \(r\).

We proved above that if \((Y\), \(\varrho)^{(\beta)}\) is the disjoint union \(\puo{X}{\beta} \sqcup \puo{X}{\alpha}\) with an admissible metric with~parameter \(r\),
then the analogous property holds for \((Y\), \(\varrho)^{(\beta + 1)}\).
To prove that \(\OPN{iht}(Y\), \(\varrho) = \alpha + 1\), it remains to~show that, if \(\beta \leqslant \alpha\) is a limit ordinal and \((Y\), \(\varrho)^{(\gamma)}\) is the disjoint union \(\puo{X}{\gamma} \sqcup \puo{X}{\alpha}\) with~an~admissible metric with parameter \(r\) for every ordinal \(\gamma < \beta\), the same remains true for the space \((Y\), \(\varrho)^{(\beta)}\).
But this~fact is~a~special case of Proposition~\ref{conv of disjoint sums}.

It remains to consider the limit case, i.e., that if \(\alpha\) is a limit ordinal and for every ordinal \(\beta < \alpha\) there~exists a compact metric space \((X_\beta\), \(d_\beta)\) with iso--height \(\beta\), then one can find a compact metric space \linebreak \((Y\), \(\varrho)\) with~iso--height \(\alpha\).
Fix a transfinite sequence \(\{(X_\beta\), \(d_\beta)\}_{\beta < \alpha}\) as above.
Let \(\{\alpha_n\}_{n \in \mathbb{N}}\) be a strictly increasing sequence of ordinals for which \(\sup_{n \in \mathbb{N}}\alpha_n = \alpha\).
Consider a sequence \(\{X_{\alpha_n}\}_{n \in \mathbb{N}}\).
For the convenience, we will write \((X_n\), \(d_n)\) instead of \((X_{\alpha_n}\), \(d_{\alpha_n})\) for \(n \in \mathbb{N}\).
Note that, without loss of~generality, we can assume that \(\{\OPN{diam}_{d_{n}}(X_{n})\}_{n \in \mathbb{N}}\) strictly decrease to \(0\) (if necessary, we can rescale all the~metrics, which~will~not change iso--heights of our spaces).
For the same reason, without loss of generality we can assume that there~exists a~sequence \(\{r_n\}_{n \in \mathbb{N}} \subset \mathbb{R}\) such that \(r_{n} > r_{n + 1} > \OPN{diam_{d_{n}}X_{n}}\) for every \(n \in \mathbb{N}\).
Let~\(X_\infty\) be a one--point metric space.
Consider a compact metric space \((Y\), \(\varrho)\) being the disjoint union \(\bigsqcup_{n \in \overline{\mathbb{N}}}X_n\) with~an~admissible metric~\(\varrho\) with~parameters \(\{r_n\}_{n \in \mathbb{N}}\).
We will prove that \(\OPN{iht}(Y\), \(\varrho) = \alpha\).

Fix an ordinal \(\beta < \alpha\) such that \((Y\), \(\varrho)^{(\beta)}\) is the disjoint union \(\bigsqcup_{n \in \overline{\mathbb{N}}}\puo{X_n}{\beta}\) with an admissible metric \(\puo{\varrho}{\beta}\) with parameters \(\{r_n\}_{n \in \mathbb{N}}\).
Similarly as before, we will first justify that for any \(\varphi \in \OPN{Iso}(\puo{Y}{\beta}\),~\(\puo{\varrho}{\beta})\) and every \(n \in \overline{\mathbb{N}}\) we have \(\varphi(\puo{X_n}{\beta}) = \puo{X_n}{\beta}\).
From this fact one can infer, like the previous time, that~\((Y\),~\(\varrho)^{(\beta + 1)}\) is the~disjoint union \(\bigsqcup_{n \in \overline{\mathbb{N}}}\puo{X_n}{\beta + 1}\) with an admissible metric \(\puo{\varrho}{\beta + 1}\) with parameters \(\{r_n\}_{n \in \mathbb{N}}\).
Choose arbitrary \(\varphi \in \OPN{Iso}(\puo{Y}{\beta}\),~\(\puo{\varrho}{\beta})\).
Note that the point that builds the space \(X_{\infty}\) is the only point of \(\puo{Y}{\beta}\) \linebreak whose all the spheres of radii from the set \(\{r_n\}_{n \in \mathbb{N}}\) are non--empty.
(Indeed, every point from \(\puo{X_n}{\beta}\) for \(n \in \mathbb{N}\) has empty sphere with radius \(r_{n + 1}\).)
This means that \(\varphi(\puo{X_{\infty}}{\beta}) = \puo{X_{\infty}}{\beta}\).
Because of that \(\varphi(\puo{X_n}{\beta}) = \puo{X_n}{\beta}\) for~\(n \in \mathbb{N}\), as~\(\varphi\) is surjective and preserves the aforementioned spheres.

The very last step of this proof is to show that if \(\beta \leqslant \alpha\) is a limit ordinal such that for every \(\gamma < \beta\) the~space \((Y\), \(\varrho)^{(\gamma)}\) is the disjoint union \(\bigsqcup_{n \in \overline{\mathbb{N}}} \puo{X}{\gamma}_{n}\) with an admissible metric with~parameters \(\{r_n\}_{n \in \mathbb{N}}\), then \((Y\),~\(\varrho)^{(\beta)}\) has similar property.
This will lead us to the conclusion that \((Y\), \(\varrho)^{(\alpha)}\) is the disjoint union \(\bigsqcup_{n \in \overline{\mathbb{N}}}\puo{X_n}{\alpha_n}\), where \(\alpha_{\infty} =~0\), with an admissible metric \(\puo{\varrho}{\alpha}\) with parameters \(\{r_n\}_{n \in \mathbb{N}}\).
To prove this~fact, we will proceed similarly as in~the~proof of~Proposition \ref{conv of disjoint sums}.
First, fix \(\varepsilon > 0\) and find \(n_0 \in \mathbb{N}\) such that \(r_n < \frac{\varepsilon}{2}\) for~every \(n \geqslant n_0\).
Let \(\gamma_0 < \beta\) be such that \(d_{GH}(\puo{X_j}{\gamma}\), \(\puo{X_j}{\beta}) < \frac{\varepsilon}{2}\) for all \(\gamma_0 \leqslant \gamma < \beta\) and \(j < n_0\).
Choose ordinal \(\gamma_0 \leqslant \gamma < \beta\).
Consider the~space \(Z\) being the disjoint union of the spaces \(\bigsqcup_{n = n_0}^{\infty}X_{n}^{(\gamma)}\)~and \(\bigsqcup_{n = n_0}^{\infty}X_{n}^{(\beta)}\), both considered with~admissible metrics with~parameters \(\{r_n\}_{n \in \mathbb{N}}\), and the space \(X_{\infty}\).
We~equip~\(Z\) with~an~admissible~metric~\(\zeta\) such~that \(\zeta(x_{m}^{(\gamma)}\), \(x_{n}^{(\beta)}) = r_m + r_n\) for~all \(x_{m}^{(\gamma)} \in X_{m}^{(\gamma)}\), \(x_{n}^{(\beta)} \in X_{n}^{(\beta)}\), and all \(m\), \(n \in \overline{\mathbb{N}}\) being at least \(n_0\).
For~\(j <~n_0\), let \(X_j'\) be the disjoint union \(X_j^{(\gamma)} \sqcup X_j^{(\beta)}\) with an admissible metric \(d_j'\) such~that \((d_j')_H(X_j^{(\gamma)}\),~\(X_j^{(\beta)}) < \frac{\varepsilon}{2}\).
Almost finally, to see that \(d_{GH}(\puo{Y}{\gamma}\), \(\bigsqcup_{n \in \overline{\mathbb{N}}} \puo{X}{\beta}_{n}) < \varepsilon\), for \(\bigsqcup_{n \in \overline{\mathbb{N}}} \puo{X}{\beta}_{n}\) equipped with the appropriate metric, it~is enough to~consider the space \(D\) being the disjoint union \(\bigsqcup_{j < n_0}X_j'\), considered with an admissible metric with parameters \(\row{r}{n_0 - 2}\), and \(Z\), with~an~admissible metric \(\delta\) such that \(\delta(x_j'\),~\(x_n') = r_j\) for~every \(x_j' \in X_j'\), where \(j < n_0\), every \(x_n' \in X_n'\), where \(n \in \overline{\mathbb{N}}\) is not smaller than~\(n_0\), while \(X_n' = X_n^{(\gamma)} \sqcup X_n^{(\beta)}\) for~\(n_0 \leqslant n < \infty\) and \(X_\infty' = X_\infty\).
It is easy to see that the way of defining the space \(D\) \linebreak guarantees that \(\delta_{H}(\varphi(\bigsqcup_{n \in \overline{\mathbb{N}}} \puo{X}{\gamma}_{n})\),~\(\psi(\bigsqcup_{n \in \overline{\mathbb{N}}} \puo{X}{\beta}_{n})) < \varepsilon\), where \(\varphi\), \(\psi\) are the obvious isometric embeddings of~\(\puo{Y}{\gamma} = \bigsqcup_{n \in \overline{\mathbb{N}}} \puo{X}{\gamma}_{n}\) and \(\bigsqcup_{n \in \overline{\mathbb{N}}} \puo{X}{\beta}_{n}\) into \(D\).
Finally, we proved the convergence \(\puo{Y}{\gamma} \longrightarrow \bigsqcup_{n \in \overline{\mathbb{N}}} \puo{X}{\beta}_{n}\) in \(\mathfrak{GH}\) when \(\gamma \longrightarrow \beta\), thus, since \((Y\), \(\varrho)^{(\gamma)} \longrightarrow (Y\), \(\varrho)^{(\beta)}\) in~\(\mathfrak{GH}\) when \(\gamma \longrightarrow \beta\) (see Corollary \ref{Convergence_of_a_net}), the~space \((Y\),~\(\varrho)^{(\beta)}\) has~the~desired form.
In particular, as announced earlier, this leads us to the conclusion that \((Y\), \(\varrho)^{(\alpha)}\) \linebreak is the disjoint union \(\bigsqcup_{n \in \overline{\mathbb{N}}}\puo{X_n}{\alpha_n}\), where \(\alpha_{\infty} =~0\), with an admissible metric \(\puo{\varrho}{\alpha}\) with parameters \(\{r_n\}_{n \in \mathbb{N}}\).

The proved above form of \((Y\), \(\varrho)^{(\alpha)}\) guarantees that starting from this point our transfinite sequence of~iso--derivetives of~\((Y\), \(\varrho)\) is constant.
Simultaneously, \((Y\), \(\varrho)^{(\beta)} \neq (Y\), \(\varrho)^{(\beta + 1)}\) in \(\mathfrak{GH}\) for every \(\beta < \alpha\), as~\(\OPN{iht} X_{\alpha_n} = \alpha_n\) for~every \(n \in \mathbb{N}\) and \(\sup_{n \in \mathbb{N}} \alpha_n = \alpha\).
Therefore, \(\OPN{iht}(Y\), \(\varrho) = \alpha\), what finishes the proof.
\end{proof}

\begin{ex} \label{images}
On the below figures one can see some examples of compact metric spaces together with their~iso--heights.
All these spaces are considered with Euclidian metric.
(The first one is a one--point space.)
\begin{figure}[H]
\begin{subfigure}{0.1475 \textwidth}
  \includegraphics[width = \linewidth]{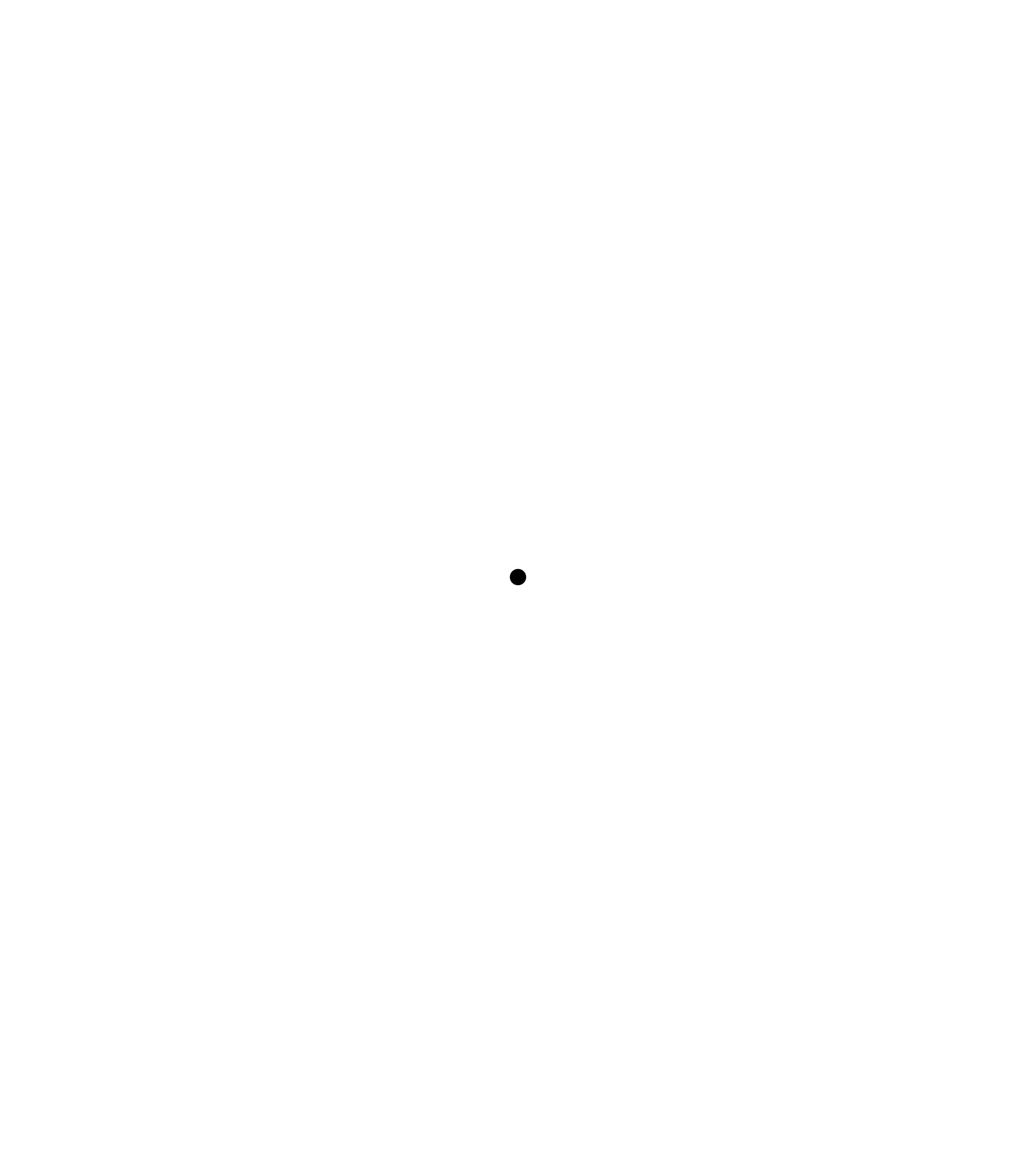}
  \caption{\(\OPN{iht} = 0\)}
  \label{0}
\end{subfigure} \hfil
\begin{subfigure}{0.1475 \textwidth}
  \includegraphics[width = \linewidth]{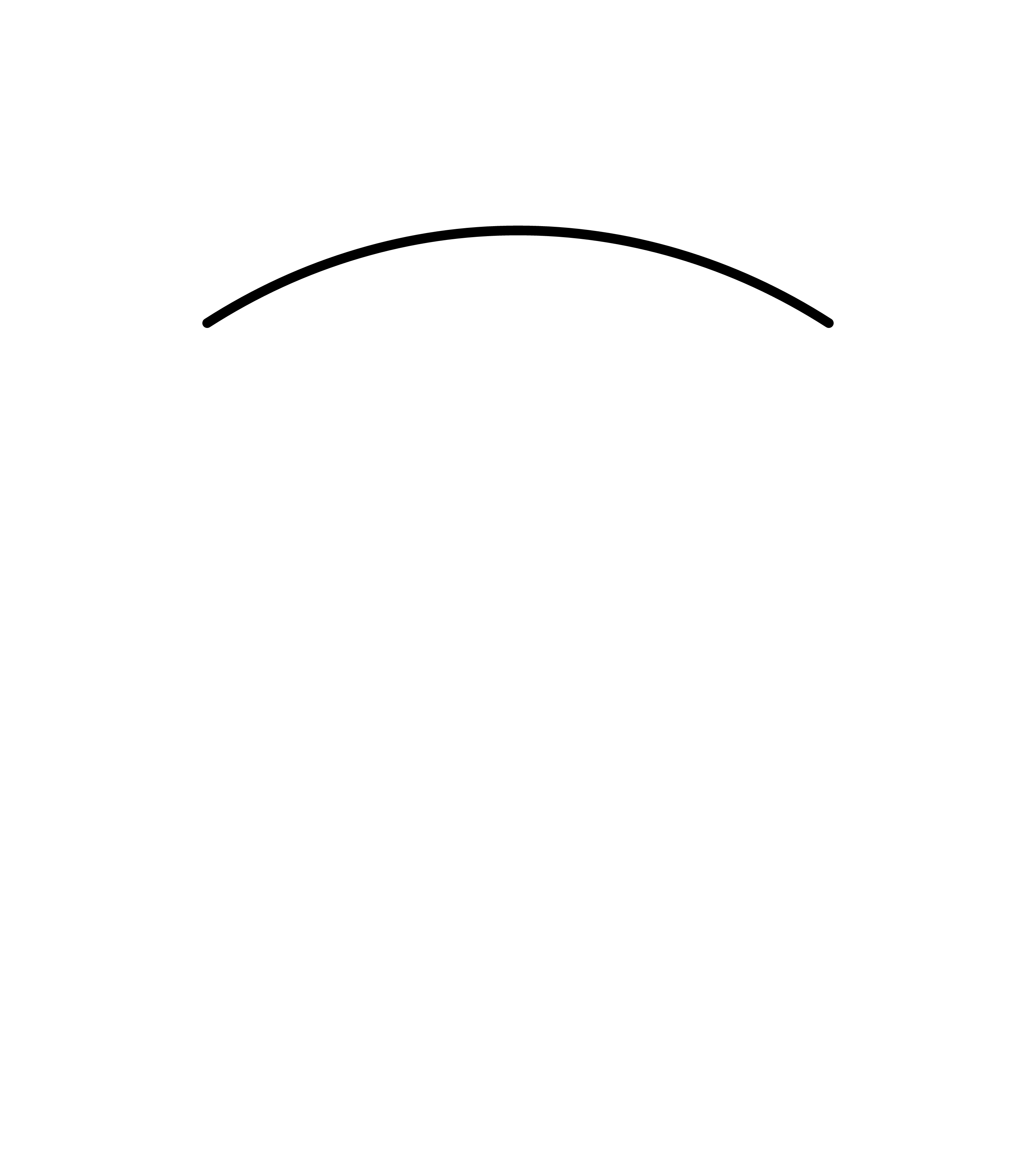}
  \caption{\(\OPN{iht} = \omega\)}
  \label{omega}
\end{subfigure}\hfil
\begin{subfigure}{0.1475 \textwidth}
  \includegraphics[width = \linewidth]{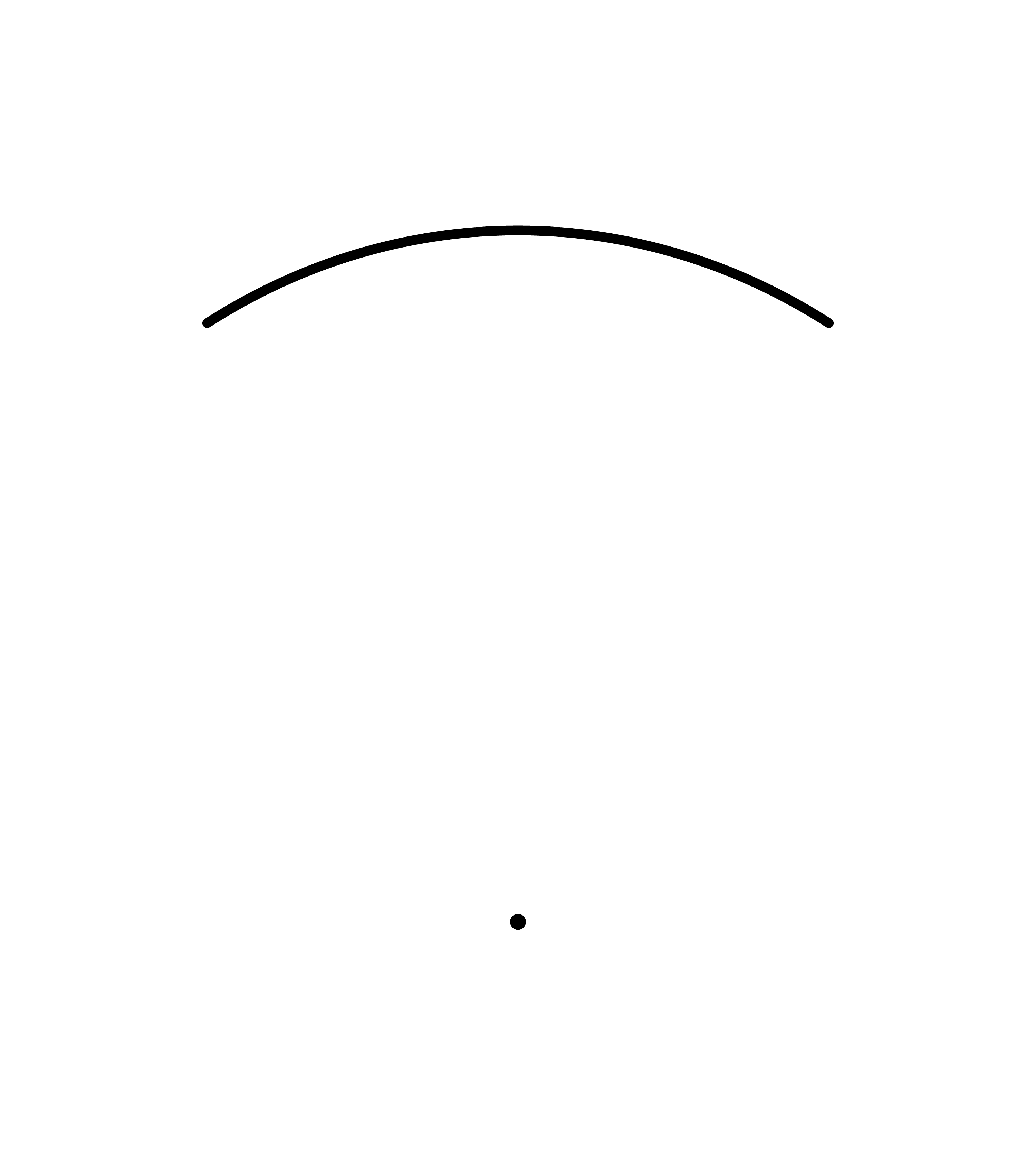}
  \caption{\(\OPN{iht} = \omega + 1\)}
  \label{omega + 1}
\end{subfigure}\hfil
\begin{subfigure}{0.1475 \textwidth}
  \includegraphics[width = \linewidth]{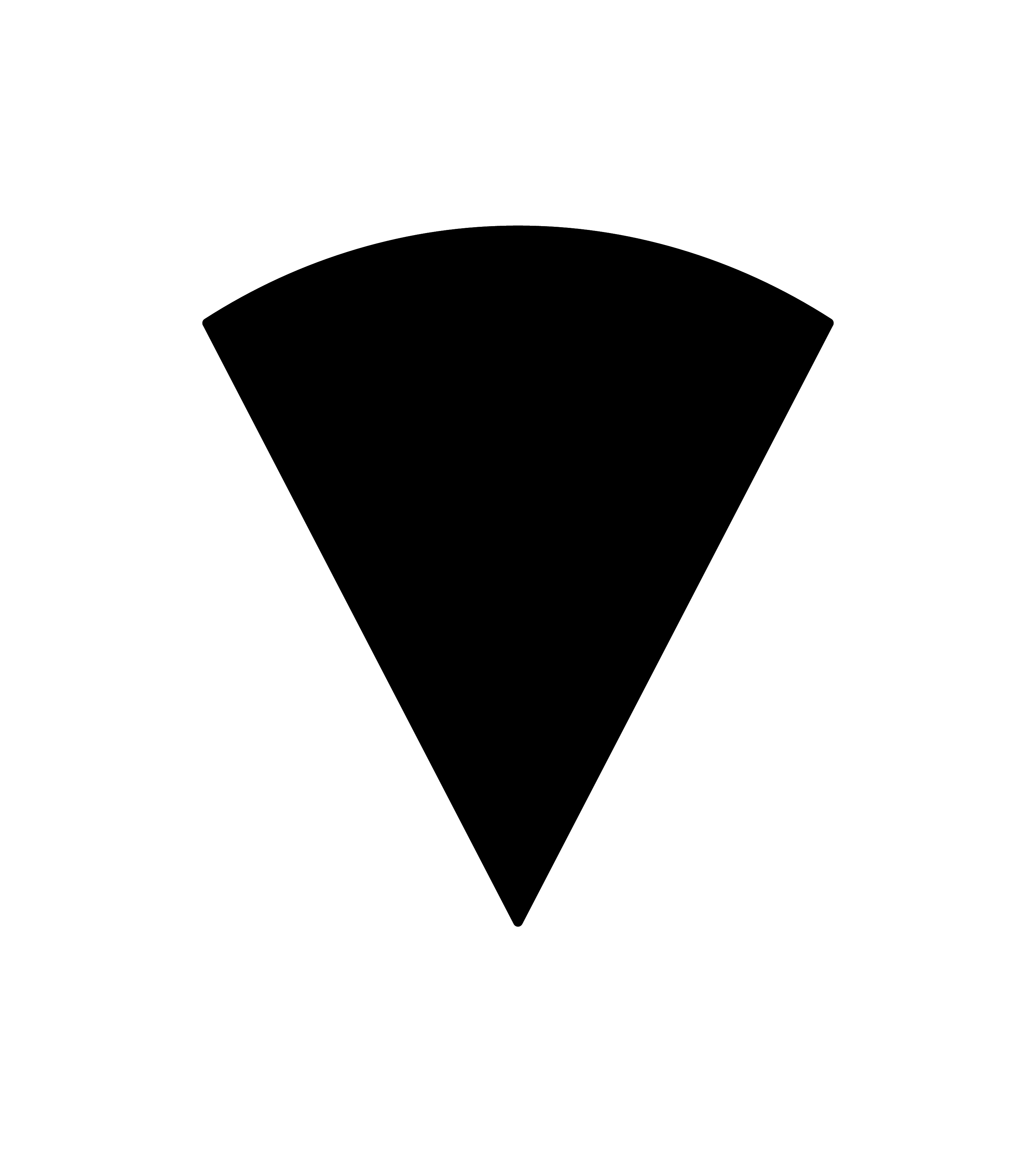}
  \caption{\(\OPN{iht} = 2\omega\)}
  \label{2omega}
\end{subfigure}
\end{figure}
\end{ex}

It is left as an exercise for the reader to find for every ordinal \(\alpha < \omega^2\) an explicit example of a compact metric space \((X\), \(d)\) such~that \(\OPN{iht}(X\), \(d) = \alpha\), and \((X\), \(d)^{(\alpha)}\) is a one--point compact metric space.

It is obvious that the above proof of Theorem \ref{all of countable heights} works for countable compact metric spaces as well. \linebreak
We can thus write the following conclusion.

\begin{cor}
For any countable ordinal number \(\alpha\), there exists a countable compact metric space \((X\), \(d)\) for~which \(\OPN{iht}(X\), \(d) = \alpha\).
\end{cor}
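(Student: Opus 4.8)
The plan is to observe that the transfinite construction carried out in the proof of Theorem~\ref{all of countable heights} never leaves the class of \emph{countable} compact metric spaces, so it suffices to re-run that proof verbatim, checking at each of its three steps that countability is preserved; the iso--height computation is unaffected, since it nowhere uses uncountability. First I would note that the base cases are finite, hence countable: \(\{0\}\) has iso--height \(0\) and \(\{0\), \(1\}\) has iso--height \(1\).

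For the successor step, recall that from a compact metric space \((X\), \(d)\) with \(\OPN{iht}(X, \, d) = \alpha\) the proof builds \((Y\), \(\varrho) = X \sqcup \puo{X}{\alpha}\) with an admissible metric, and shows \(\OPN{iht}(Y, \, \varrho) = \alpha + 1\). The only point to check is that \(\puo{X}{\alpha}\) is countable whenever \(X\) is. This is immediate: in the direct system of iso--derivatives of \((X\), \(d)\) all arrows are surjective, so by point~(\ref{main f2}) of Theorem~\ref{main} (together with an obvious transfinite induction on the length of the composition defining \(\pi_0^\alpha\)) the canonical projection \(\pi_{0}^{\alpha} \colon X \longrightarrow \puo{X}{\alpha}\) is a surjective morphism in \(\mathsf{cmn}\); hence \(\puo{X}{\alpha}\) is a continuous image of the countable space \(X\), so it is countable, and \(Y = X \sqcup \puo{X}{\alpha}\), as a disjoint union of two countable sets, is countable (it is compact by Theorem~\ref{all of countable heights}). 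For the limit step, the proof forms \((Y\), \(\varrho) = \bigsqcup_{n \in \overline{\mathbb{N}}} X_n\), where each \(X_n = X_{\alpha_n}\) is, by the inductive hypothesis, a countable compact metric space and \(X_\infty\) is a one--point space; a countable union of countable sets together with one extra point is countable, so \(Y\) is again countable, and again compact by Theorem~\ref{all of countable heights}. The verification that \(\OPN{iht}(Y, \, \varrho) = \alpha\) is word for word the same as there, invoking only Proposition~\ref{d1}, Proposition~\ref{conv of disjoint sums} and Corollary~\ref{Convergence_of_a_net}, all of which apply here without any change.

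The one point deserving attention is the claim that \(\puo{X}{\beta}\) stays countable for \emph{every} ordinal \(\beta\), including limit ordinals, since there \(\puo{X}{\beta}\) arises as an inductive limit rather than as a set--theoretic quotient of \(X\); but this is precisely what the surjectivity of \(\pi_{0}^{\beta} \colon X \longrightarrow \puo{X}{\beta}\) settles, and that surjectivity is exactly point~(\ref{main f2}) of Theorem~\ref{main} applied along the relevant chain of morphisms. Modulo this remark, the corollary follows from the transfinite induction of Theorem~\ref{all of countable heights} with no further work.
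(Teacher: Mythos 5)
Your proposal is correct and follows essentially the same route as the paper, which simply remarks that the proof of Theorem~\ref{all of countable heights} goes through verbatim for countable compact metric spaces. You have merely made explicit the routine countability checks (in particular that \(\puo{X}{\beta}\) is countable because the canonical projection \(\pi_0^{\beta}\) is surjective), which the paper leaves implicit.
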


The next question regarding \(\OPN{iht}\) that arises naturally is whether it is possible to write the above corollary for~other special classes of compact sets.
For example, considering iso--rigid compact metric spaces (i.e.,~thanks to Definition \ref{iso-height}, the spaces with iso--heihgt \(0\)), it is interesting if, for every countable \(\alpha\), every such a space is an iso--derivative of~order \(\alpha\) for some compact metric space, at the same time, not appearing in the transfinite sequence of~isometric derivatives of that space before the \(\alpha\) derivative.
In particular, is this statement true for the one~point space?

In both cases, the argument for the successor ordinals from the proof of the Theorem \ref{all of countable heights} is still valid.
Certainly, more difficult to prove is the case of limit ordinals for which one should probably find completely different approach than this one presented above.
Intuition dictates that this case should has positive answer as well.

\begin{prob} \label{one point and stiff problem}
Is it true that for any countable ordinal \(\alpha\), and
\begin{enumerate}
\item every iso--rigid space, or
\item a one--point space \label{point for every hight}
\end{enumerate}
\((X\), \(d)\), there exists a compact metric space \((Y\), \(\varrho)\) such that \((Y\), \(\varrho)^{(\alpha)} = (X\), \(d)\) in \(\mathfrak{GH}\), and \(\alpha\) is the first ordinal with this property?
\end{prob}

The second case should be significantly easier to prove, for, during the proof, we can rescale our spaces to make them as small as we want. (We have already used this fact when proving the Theorem \ref{all of countable heights}.)

\begin{prop}
For every \(r > 0\), every countable ordinal \(\alpha\), and every compact metric space \((X\), \(d)\) we have \((X\), \(r \cdot d)^{(\alpha)} = (\puo{X}{\alpha}\), \(r \cdot \puo{d}{\alpha})\).
\end{prop}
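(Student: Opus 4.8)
The plan is to proceed by transfinite induction on $\alpha$, the base case $\alpha = 0$ being immediate since $(X, r\cdot d)^{(0)} = (X, r\cdot d) = (\puo{X}{0}, r\cdot\puo{d}{0})$ and $\pi_0^0 = \OPN{id}_X$ in both cases.

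For the heart of the successor step I would first record that a bijection of $X$ is an isometry of $(X, d)$ if and only if it is an isometry of $(X, r\cdot d)$; hence $\OPN{Iso}(X, r\cdot d) = \OPN{Iso}(X, d)$, the relation $\underset{^{\OPN{iso}}}{\sim}$ is the same for both metrics, and consequently the quotient set $\puo{X}{1}$ together with the canonical projection $\puo{\pi}{1}_X$ coincide. Proposition \ref{d1} then gives, for all $x, y \in X$,
\[
\puo{(r\cdot d)}{1}\!\left(\puo{\pi}{1}_X(x), \, \puo{\pi}{1}_X(y)\right)
= \inf_{\varphi, \, \psi \, \in \, \OPN{Iso}(X, \, d)} r\cdot d(\varphi(x), \, \psi(y))
= r\cdot\puo{d}{1}\!\left(\puo{\pi}{1}_X(x), \, \puo{\pi}{1}_X(y)\right),
\]
so that $(X, r\cdot d)^{(1)} = (\puo{X}{1}, r\cdot\puo{d}{1})$ literally. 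Combining this with the induction hypothesis and the (easy) fact that the operation $(\cdot)^{(1)}$ respects isometries yields the successor step:
\[
(X, r\cdot d)^{(\alpha+1)}
= \bigl((X, r\cdot d)^{(\alpha)}\bigr)^{(1)}
= \bigl(\puo{X}{\alpha}, r\cdot\puo{d}{\alpha}\bigr)^{(1)}
= \bigl((\puo{X}{\alpha})^{(1)}, r\cdot(\puo{d}{\alpha})^{(1)}\bigr)
= \bigl(\puo{X}{\alpha+1}, r\cdot\puo{d}{\alpha+1}\bigr).
\]

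For a limit ordinal $\alpha$ --- the one step I expect to require genuine care --- note that by the induction hypothesis the direct system whose inductive limit is $(X, r\cdot d)^{(\alpha)}$ has, up to isometry, objects $\{(\puo{X}{\beta}, r\cdot\puo{d}{\beta})\}_{\beta < \alpha}$ and surjective canonical arrows; by Theorem \ref{main}(\ref{main f1}) its limit object depends, up to isometry, only on these objects, so it suffices to identify the $\mathfrak{GH}$-limit of the net $\{(\puo{X}{\beta}, r\cdot\puo{d}{\beta})\}_{\beta < \alpha}$, which by Corollary \ref{Convergence_of_a_net} is exactly the inductive-limit object. Here I would invoke the elementary scaling identity $d_{GH}((X, r\cdot d), (Y, r\cdot\varrho)) = r\cdot d_{GH}((X, d), (Y, \varrho))$: since $\puo{X}{\beta} \longrightarrow \puo{X}{\alpha}$ in $\mathfrak{GH}$ as $\beta \longrightarrow \alpha$ (being the limit of the unscaled iso-derivative system, by Corollary \ref{Convergence_of_a_net}), rescaling gives $(\puo{X}{\beta}, r\cdot\puo{d}{\beta}) \longrightarrow (\puo{X}{\alpha}, r\cdot\puo{d}{\alpha})$ in $\mathfrak{GH}$, and uniqueness of Gromov--Hausdorff limits forces $(X, r\cdot d)^{(\alpha)} = (\puo{X}{\alpha}, r\cdot\puo{d}{\alpha})$ in $\mathfrak{GH}$. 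The main obstacle is essentially bookkeeping: at limit stages the iso-derivatives are defined only up to isometry, so the asserted equality must be read as an equality of elements of $\mathfrak{GH}$, and one must be careful that it is Theorem \ref{main}(\ref{main f1}) which licenses passing between the actual rescaled direct system and the net $\{(\puo{X}{\beta}, r\cdot\puo{d}{\beta})\}_{\beta<\alpha}$. Once this is granted, the limit step reduces to the two displayed facts above together with the scaling behaviour of $d_{GH}$.
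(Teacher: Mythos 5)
Your proposal is correct and follows essentially the same route as the paper, whose entire proof is the observation that \(\OPN{Iso}(X, \, d) = \OPN{Iso}(X, \, r \cdot d)\) combined with the formula from Proposition \ref{d1} --- exactly your successor step. The careful treatment you give of the limit-ordinal case (via the scaling identity for \(d_{GH}\), Corollary \ref{Convergence_of_a_net} and Theorem \ref{main}(\ref{main f1})) is left implicit in the paper, but it is consistent with, and a reasonable elaboration of, the intended argument.
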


\begin{proof}
It is enough to notice that \(\OPN{Iso}(X\), \(d) = \OPN{Iso}(X\), \(r \cdot d)\) and use formula for \(\puo{d}{1}\) from Proposition~\ref{d1}.
\end{proof}

One of the most general problems regarding the problem related to surjectivity of isometric derivative is~whether, treated as a mapping in Gromov--Hausdorff space, it is surjective.
If so, will the answer remain positive for iso--derivatives of order \(\alpha\) for any countable ordinal \(\alpha\)?

Finally, if the above is true, we can ask even more general question, being the counterpart of Problem~(\ref{one point and stiff problem}).

\begin{prob} \( \)
\begin{enumerate}
\item Is \(\puo{\Pi}{1} \colon (\mathfrak{GH}\), \(d_{GH}) \ni (X\), \(d) \longmapsto (X\), \(d)^{(1)} \in (\mathfrak{GH}\), \(d_{GH})\) surjective? \label{problem on surjectivity}
\end{enumerate}
If so, and \(\alpha\) is countable ordinal,
\begin{enumerate}
\item[(2)] is \(\puo{\Pi}{\alpha} \colon (\mathfrak{GH}\), \(d_{GH}) \ni (X\), \(d) \longmapsto (X\), \(d)^{(\alpha)} \in (\mathfrak{GH}\), \(d_{GH})\) surjective?
\end{enumerate}
Finally, if \((X\), \(d)\) is a compact metric space and \(\alpha\) is a countable ordinal, 
\begin{enumerate}
\item[(3)] is there a compact metric space \((Y\), \(\varrho)\) such that \((Y\), \(\varrho)^{(\alpha)} = (X\), \(d)\) in \(\mathfrak{GH}\), and \(\alpha\) is the first ordinal with this property?
\end{enumerate}
\end{prob}

Below we present some simple, but interesting tidbits on the subject of iso--derivative and iso--height of~a~compact metric space.

\begin{fa}
It is not true that if \((X\), \(d)\) can be isometrically embedded in \((Y\), \(\varrho)\), then \(\OPN{iht}(X\), \(d) \leqslant \OPN{iht}(Y\), \(\varrho)\).
\end{fa}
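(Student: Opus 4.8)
The plan is to refute the implication by a concrete three--point counterexample, in which a symmetric compact metric space is embedded isometrically into a rigid one. I would take $Y \defeq \{0,\,1,\,3\}$ and $X \defeq \{0,\,1\} \subseteq Y$, both carrying the Euclidean metric inherited from $\mathbb{R}$. Then $X$ is literally a subspace of $Y$, so $(X,\,d)$ embeds isometrically into $(Y,\,\varrho)$, i.e. $(X,\,d) \preccurlyeq_i (Y,\,\varrho)$; it remains only to compute the two iso--heights straight from the definitions.

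First I would check that $Y$ is iso--rigid. Its three pairwise distances are $1$, $2$ and $3$, hence pairwise distinct, so $\OPN{Iso}(Y,\,\varrho) = \{\OPN{id}_Y\}$. Consequently the relation $\underset{^{\OPN{iso}}}{\sim}$ on $Y$ is the diagonal, the quotient $\puo{Y}{1}$ coincides with $Y$ as a set, and, by Proposition \ref{d1} with the infimum ranging over the identity alone, $\puo{\varrho}{1} = \varrho$. Thus $\pi_0^1 = \OPN{id}_Y$ is an isometry, and Definition \ref{iso-height} gives $\OPN{iht}(Y,\,\varrho) = 0$. Next I would check that $\OPN{iht}(X,\,d) = 1$: the transposition $0 \leftrightarrow 1$ is an isometry of $X$, so $0 \underset{^{\OPN{iso}}}{\sim} 1$, the first iso--derivative $\puo{X}{1}$ is a one--point space, and $\pi_0^1$ collapses two points at distance $1$ to a single point, hence is not an isometry; since a one--point space is iso--rigid, $\pi_1^2 = \OPN{id}_{\puo{X}{1}}$ is an isometry, so by Definition \ref{iso-height} indeed $\OPN{iht}(X,\,d) = 1$.

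Putting the two computations together, $\OPN{iht}(X,\,d) = 1 > 0 = \OPN{iht}(Y,\,\varrho)$ although $(X,\,d) \preccurlyeq_i (Y,\,\varrho)$, which disproves the displayed implication. There is essentially no obstacle here beyond carefully tracking the canonical projections against Definition \ref{iso-height}; the only point worth stressing is the mechanism at work, namely that adjoining a single point at a ``generic'' distance destroys the symmetry responsible for the positive iso--height of $X$. By the same mechanism one expects the discrepancy to be pushed up to an arbitrary countable ordinal, e.g. by embedding a space of iso--height $\alpha$ supplied by Theorem \ref{all of countable heights} into an iso--rigid compact metric space obtained by adjoining suitable points at mutually generic distances (this is elementary for the finite building blocks), thereby dropping the iso--height from $\alpha$ to $0$; but the three--point example already settles the Fact as stated.
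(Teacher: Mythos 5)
Your proposal is correct, and it proves the Fact by the same mechanism the paper uses: embed a space with a nontrivial symmetry (hence positive iso--height) isometrically into a rigid space (iso--height $0$). The only difference is the choice of witnesses. The paper takes $Y$ to be the Egyptian triangle (side lengths $3$, $4$, $5$) with the Euclidean metric and $X$ a closed interval inside it, which exhibits the more dramatic gap $\OPN{iht}(X) = \omega > 0 = \OPN{iht}(Y)$ but silently relies on the (unproved there) computation $\OPN{iht}([0,\,1]) = \omega$ and on the rigidity of the triangle. Your pair $Y = \{0,\,1,\,3\}$, $X = \{0,\,1\}$ is more elementary and entirely self--contained: the three pairwise distances $1$, $2$, $3$ are distinct, so each point of $Y$ is pinned down by its distance multiset and $\OPN{Iso}(Y,\,\varrho)$ is trivial, giving $\OPN{iht}(Y,\,\varrho) = 0$ directly from Definition \ref{iso-height}; and the swap on $X$ collapses $\puo{X}{1}$ to a point, so $\pi_0^1$ is not an isometry while $\pi_1^2$ is, giving $\OPN{iht}(X,\,d) = 1$. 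The gap $1 > 0$ already refutes the displayed implication, so your argument is complete; your closing remark about pushing the discrepancy up to an arbitrary countable ordinal is a reasonable extra observation, but, as you note, it is not needed for the Fact as stated.
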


\begin{proof}
It is enough to consider so called Egyptian triangle (i.e., the triangle whose sides have the~lengths \(3\), \(4\) and \(5\)), equipped with the Euclidian metric.
Such a compact metric space has iso--height \(0\) and contains isometric copy of a closed interval with iso--height \(\omega\).
\end{proof}

Considering only the typical compact subspaces of Euclidean spaces, such as triangle, square, disc or torus, or the spaces (\subref{0}) -- (\subref{2omega}) presented in Example \ref{images}, one can get wrong impression that the isometric derivative \(\puo{X}{1}\) can be always isometrically embedded in \(X\).

\begin{ex}
Let \(T\) be the Egyptian triangle with the Euclidian metric.
Consider the disjoint union \linebreak \(X = T \sqcup T\) with an admissible metric \(d\) with parameter \(r = 2\frac{1}{2}\).
Then the iso--derivative \(\puo{X}{1}\) cannot~be isometrically embedded in \(X\).
What is more, there is no proper subspace of \(X\) that is homeomorphic to~\(\puo{X}{1}\).
\end{ex}

The last two facts will be about potential continuity of iso--derivative and iso--height.

\begin{fa}
It is not always true that if a net \(\{(X_{\sigma}\), \(d_{\sigma})\}_{\sigma \in \Sigma} \subset \mathfrak{GH}\) converges to a compact metric space \((X\),~\(d)\), then \(\OPN{iht}(X\), \(d) \leqslant \inf_{\sigma \in \Sigma} \OPN{iht}(X_{\sigma}\), \(d_{\sigma})\).
The same stands for \(\sup_{\sigma \in \Sigma} \OPN{iht}(X_{\sigma}\), \(d_{\sigma}) \leqslant \OPN{iht}(X\), \(d)\).
\end{fa}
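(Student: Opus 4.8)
The plan is to disprove each of the two displayed inequalities by an explicit sequence (a net indexed by $\mathbb{N}$); it is cleanest to handle the two assertions separately, and in each case only a couple of iso--derivatives have to be computed.

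For the first inequality I would take $X_n = \{0,\,1,\,1+\tfrac1n\}$ with the Euclidean metric, for $n \geqslant 2$. Each $X_n$ is iso--rigid: its three pairwise distances $\tfrac1n$, $1$, $1+\tfrac1n$ are pairwise distinct, and the (unordered) pair of distances from a point to the other two is different for each of the three points, so any isometry of $X_n$ fixes all three points; hence $\puo{X_n}{1} = X_n$ and $\OPN{iht}(X_n) = 0$. Using the standard inclusions into $\mathbb{R}$ one gets $d_{GH}(X_n,\{0,\,1\}) \leqslant \tfrac1n$, so $X_n \to \{0,\,1\}$ in $\mathfrak{GH}$. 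On the other hand $\OPN{iht}(\{0,\,1\}) = 1$: the two--point space admits the nontrivial isometry interchanging its points, so it is not iso--rigid and its iso--derivative is a one--point space, which \emph{is} iso--rigid. Thus the limit $X = \{0,\,1\}$ has $\OPN{iht}(X) = 1 > 0 = \inf_{n} \OPN{iht}(X_n)$, which disproves the inequality $\OPN{iht}(X,d)\leqslant\inf_{\sigma}\OPN{iht}(X_\sigma,d_\sigma)$.

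For the second inequality I would take $X_n = ([0,1],\, \tfrac1n d_e)$ for $n \in \mathbb{N}$. Since $\OPN{diam}_{\frac{1}{n}d_e}([0,1]) = \tfrac1n \to 0$, the sequence converges in $\mathfrak{GH}$ to a one--point space $X$, for which $\OPN{iht}(X) = 0$. By the Proposition on rescaling of iso--derivatives stated just above (applied with $r = \tfrac1n$), for every ordinal $\alpha$ the canonical projection $\pi_{\alpha}^{\alpha+1}$ of $X_n$ is an isometry precisely when the corresponding projection of $([0,1],\,d_e)$ is; hence $\OPN{iht}(X_n) = \OPN{iht}([0,1],\,d_e)$, and the latter is at least $1$ because $([0,1],\,d_e)$ admits the nontrivial isometry $x \mapsto 1-x$ and so is not iso--rigid. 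Therefore $\sup_{n} \OPN{iht}(X_n) \geqslant 1 > 0 = \OPN{iht}(X)$, disproving $\sup_{\sigma}\OPN{iht}(X_\sigma,d_\sigma)\leqslant\OPN{iht}(X,d)$. The only steps needing (entirely elementary) checking are the iso--rigidity of the $X_n$ in the first example and the equality $\OPN{iht}(\{0,\,1\}) = 1$, both immediate from the formula for $\puo{d}{1}$ in Proposition~\ref{d1} and Definition~\ref{iso-height}; there is no genuine obstacle.
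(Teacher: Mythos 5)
Your proposal is correct and follows essentially the same strategy as the paper: asymmetric finite approximants of a symmetric space for the first inequality, and shrinking copies of an interval (your \(([0,1],\tfrac1n d_e)\) is isometric to the paper's \([0,\tfrac1n]\)) for the second. A minor advantage of your version is that it only needs the trivially checkable values \(\OPN{iht}=0\), \(\OPN{iht}(\{0,1\})=1\) and \(\OPN{iht}([0,1],d_e)\geqslant 1\), whereas the paper's argument invokes the stronger, not-locally-justified assertion \(\OPN{iht}([0,1],d_e)=\omega\).
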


\begin{proof}
(All the below spaces are considered with Euclidian metrics.)
It is obvious that one can find sequence of finite subsets of \([0\), \(1]\) with no symmetries that converges to~\([0\),~\(1]\) in the Hausdorff sense.
In this case, all~elements of the sequence has iso--height \(0\), while \(\OPN{iht}([0\), \(1]) = \omega\).
On the other hand, the sequence \(\{[0\),~\(\frac{1}{n}]\}_{n \in \mathbb{N}}\), whose all elements have iso--height \(\omega\), clearly converges in Hausdorff sense to \(\{0\}\), which has iso--height \(0\).
\end{proof}

\begin{fa}
Isometric derivative (as a mapping in \(\mathfrak{GH}\)) is not continuous.
What is more, the metric \(d_{\puo{GH}{1}}\) on \(\mathfrak{GH}\) given by
\[
\displaystyle d_{\puo{GH}{1}}(K, \, L) = d_{GH}(K, \, L) + d_{GH}(\puo{K}{1}, \, \puo{L}{1})
\]
is not complete.
\end{fa}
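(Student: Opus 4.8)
The plan is to settle both assertions with a single example: a sequence of finite subsets of the unit interval that converges to $[0,1]$ in $\mathfrak{GH}$ but whose iso--derivatives do not converge to $([0,1])^{(1)}$. First I would recall, as in the proof of the preceding fact, that for each $n$ one can choose a finite $\tfrac1n$--dense subset $K_n\subseteq[0,1]$ admitting no nontrivial self--isometry; moreover such a $K_n$ is iso--rigid. The quick reason is that for a finite $K=\{x_1<\cdots<x_k\}\subseteq\mathbb{R}$ the pair $\{x_1,x_k\}$ is the unique pair realising $\OPN{diam} K$, so any self--isometry of $K$ preserves it setwise and is therefore either $\OPN{id}_K$ or the reflection $t\mapsto x_1+x_k-t$; hence $\OPN{Iso}(K)$ is trivial as soon as $K$ is not symmetric about its midpoint, which is easy to arrange while keeping $K_n$ inside $[0,1]$ and $\tfrac1n$--dense. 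For such $K_n$ we have $\puo{K_n}{1}=K_n$ in $\mathfrak{GH}$, and since $d_{GH}(K_n,[0,1])$ is at most the Hausdorff distance between $K_n$ and $[0,1]$ inside $[0,1]$, which is at most $\tfrac1n$, we get $K_n\to[0,1]$ in $\mathfrak{GH}$.

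Next I would identify $([0,1])^{(1)}$ using Proposition~\ref{d1}. Since $\OPN{Iso}([0,1],d_e)=\{\OPN{id},\,t\mapsto 1-t\}$, the formula there gives $\puo{d}{1}([x],[y])=\min(|x-y|,\,|x+y-1|)$, and one checks directly that $[x]\mapsto\min(x,1-x)$ is an isometry of $([0,1])^{(1)}$ onto $[0,\tfrac12]$ with the Euclidean metric. In particular $\OPN{diam}([0,1])^{(1)}=\tfrac12\neq 1=\OPN{diam}[0,1]$, so $([0,1])^{(1)}$ is not isometric to $[0,1]$, whence $d_{GH}([0,1],([0,1])^{(1)})>0$ by the Note following the folklore formula for $d_{GH}$. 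Combining with the first paragraph, $\puo{K_n}{1}=K_n\to[0,1]\neq([0,1])^{(1)}$ in $\mathfrak{GH}$, so the iso--derivative $(X,d)\mapsto(X,d)^{(1)}$, viewed as a self--map of $\mathfrak{GH}$, is discontinuous at $[0,1]$.

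For the non--completeness of $d_{\puo{GH}{1}}$ (which is plainly a metric, being the sum of $d_{GH}$ with its pullback under the iso--derivative), I would note that $d_{\puo{GH}{1}}(K_n,K_m)=d_{GH}(K_n,K_m)+d_{GH}(\puo{K_n}{1},\puo{K_m}{1})=2\,d_{GH}(K_n,K_m)\longrightarrow 0$, so $(K_n)_{n\in\mathbb{N}}$ is $d_{\puo{GH}{1}}$--Cauchy. If it converged, say to $K$, in $d_{\puo{GH}{1}}$, then, since the two coordinate projections onto $(\mathfrak{GH},d_{GH})$ are $1$--Lipschitz, we would get $K_n\to K$ and $\puo{K_n}{1}\to\puo{K}{1}$ in $d_{GH}$; the first forces $K=[0,1]$ by uniqueness of $d_{GH}$--limits, and the second then yields $[0,1]=\puo{K_n}{1}\to([0,1])^{(1)}$, i.e. $[0,1]$ is isometric to $([0,1])^{(1)}$ — contradicting the previous paragraph. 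Hence $d_{\puo{GH}{1}}$ is not complete.

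There is no genuine obstacle here; the only things to get right are the elementary bookkeeping that makes $K_n$ simultaneously $\tfrac1n$--dense, contained in $[0,1]$, and asymmetric about its midpoint, and the short computation $([0,1])^{(1)}\cong[0,\tfrac12]$. The essential point is the observation that one sequence of iso--rigid finite sets approximating $[0,1]$ witnesses both the discontinuity of the iso--derivative and the incompleteness of $d_{\puo{GH}{1}}$.
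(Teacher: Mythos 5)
Your proposal is correct, and it rests on the same underlying mechanism as the paper's proof — exhibiting a \(d_{GH}\)-convergent sequence \(\{Y_n\}\) whose derivatives \(\{\puo{Y_n}{1}\}\) also converge, but to something other than the derivative of the limit — while choosing a different witnessing sequence. The paper takes \(Y_n = \puo{Y}{n}\) for the space \(Y\) of iso--height \(\omega+1\) from Example \ref{images}: there \(\puo{Y}{n} \longrightarrow \puo{Y}{\omega}\) by Corollary \ref{Convergence_of_a_net}, the image sequence \(\puo{Y}{n+1}\) converges to the same limit, yet \(\puo{Y}{\omega+1} \neq \puo{Y}{\omega}\) precisely because \(\OPN{iht}(Y) = \omega + 1\). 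You instead take iso--rigid finite \(\tfrac{1}{n}\)--dense subsets \(K_n\) of \([0,1]\), so that \(\puo{K_n}{1} = K_n \longrightarrow [0,1] \neq ([0,1])^{(1)} \cong [0,\tfrac12]\). Your route is more elementary and self-contained: it needs only the triviality of \(\OPN{Iso}(K_n)\) for an asymmetric finite subset of the line and the diameter computation \(\OPN{diam}([0,1])^{(1)} = \tfrac12\), whereas the paper's one-line proof leans on the already-developed machinery of transfinite iso--derivatives and on knowing the iso--height of the space in the figure. What the paper's choice buys is economy (everything needed has been established two pages earlier) and a discontinuity point that is itself a limit of derivatives; what yours buys is complete verifiability from first principles, including the explicit identification of the bad Cauchy sequence and of the point \([0,1]\) at which \(\puo{\Pi}{1}\) fails to be continuous. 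The bookkeeping in your argument (Cauchyness of \(\{K_n\}\) in \(d_{\puo{GH}{1}}\), the two \(1\)--Lipschitz projections, uniqueness of \(d_{GH}\)-limits) is all sound.
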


\begin{proof}
It is enough to consider the sequence of derivatives of the space in (\subref{omega + 1}) from Example \ref{images}.
\end{proof}

\section{Further Applications} \label{Section 6}
In the previous section, we were using the whole group \(\OPN{Iso}(X\), \(d)\), when defining iso--derivative of \((X\),~\(d)\).
However, nothing stands in the way of building the theory of isometric derivative or isometric height of~a~compact metric space in another way: by dividing by some proper subgroup of \(\OPN{Iso}(X\), \(d)\) that is defined by some set of rules that can be applied to every compact metric space, instead of the whole \(\OPN{Iso}(X\), \(d)\).
For example, we can divide by:
\begin{enumerate}[label=(\alph*), leftmargin = 3\parindent]
\item \(\OPN{Iso}_{\OPN{fin}}(X\), \(d) = \OPN{cl}(\langle \varphi \in \OPN{Iso}(X\), \(d) \colon \ \varphi \textnormal{ has finite rank} \rangle)\),
\item \(\OPN{Iso}_{\OPN{inv}}(X\), \(d) = \OPN{cl}(\langle \varphi \in \OPN{Iso}(X\), \(d) \colon \ \varphi \circ \varphi = \OPN{id}_X \rangle)\),
\item \label{version c} \(\OPN{Iso}_{\OPN{stab}}(X\), \(d) = \OPN{cl}(\langle \varphi \in \OPN{Iso}(X\), \(d) \colon \ \varphi(x) = x \textnormal{ on } G \rangle)\),
where \(G \subset X\) is a non--empty open set that~is~related in some sense to all \(\varphi \in \OPN{Iso}(X\), \(d)\),
\item \label{version d} \(\OPN{Iso}_{\OPN{fixed}}(X\), \(d) = \OPN{cl}(\langle \varphi \in \OPN{Iso}(X\), \(d) \colon \ \varphi(A) = A \rangle)\) for some non--empty closed set \(A \subset X\).
\end{enumerate}

In the above approaches to the problem of isometric derivative, one can define \(\puo{d}{1}\) on \(\puo{X}{1}\) in exactly the~same way as in Proposition \ref{d1}, and the rest of the steps also remains the same.

In the case of \ref{version c} and \ref{version d}, the bigger set \(G\) or \(A\) is, probably, in most cases, the more iso--rigid the space \((X\), \(d)\) should be.
For example, if in \ref{version d} we consider the interval \([0\), \(1]\) with fixed point \(p\), then \(\OPN{iht}_{\OPN{fixed}}(X\),~\(d\),~\(p)\) is equal to \(1\), when \(p = \frac{1}{2}\), and \(0\) in any other case, while the common isometric height of this interval is~\(\omega\). \linebreak
In these two cases one can investigate the properties of the subsets \(G\) and \(A\) of \(X\) for which the transfinite sequence of isometric derivatives stabilizes at, respectively, \(\OPN{cl}(G)\) and \(A\).

However, Theorem \ref{main} holds in a much more general context, thus one can construct the theory of~derivative in more general context than these considered previously.

Assume that with every compact metric space \((X\), \(d_X)\) from some category \(\mathfrak{cm}\) one can associate a mapping \(\alpha_X \colon X \times X \longrightarrow \mathbb{R}_{+}\) such that:
\begin{enumerate}
\item \(\alpha_Y \circ (u \times u) = \alpha_X\) for every isometric \(u \colon X \longrightarrow Y\), where \((Y\), \(d_Y) \in \mathsf{cm}\),
\item \(\alpha_X\) is symmetric and separates points,
\item \(\alpha_X \leqslant d_X\).
\end{enumerate}

Under these assumptions, for every compact metric spcace \((X\), \(d_X) \in \mathsf{cm}\) we can define its \(\alpha\)--derivative (of~order~\(1\)) as shown below.

Let \(\varrho_{\alpha, \, X}\) be the greatest pseudometric on \(X\) such that \(\varrho_{\alpha, \, X} \leqslant \alpha_X\), i.e.
\[
\displaystyle \varrho_{\alpha, \, X}(x, \, y) = \inf\left\{\Sigma_{k = 1}^{n} \alpha_X(z_{k - 1}, \, z_k) \colon \ x = z_0, \, \ldots, \, z_n = y \in X, \, n \in \mathbb{N}\right\}.
\]
Since \(\varrho_{\alpha, \, X} \leqslant d_X\), we can divide \(X\) by \(\{x \in X \colon \ \varrho_{\alpha, \, X}(x) = 0\}\) and get a compact metric space \((\puo{X}{1}_\alpha\), \(\puo{d}{1}_{\alpha, \, X})\) \linebreak such that the canonical mapping \(\pi \colon X \longrightarrow \puo{X}{1}_\alpha\) is nonexpansive.
Later, we can proceed similarly as in~the~section on the isometric derivative.

For example, one can consider a mapping
\[
\displaystyle \alpha_{\OPN{Iso}}(x, \, y) = \inf_{\varphi, \, \psi \, \in \, \OPN{Iso}(X, \, d)} d_X(\varphi(x), \, \psi(y))
\]
for \(x\), \(y \in X\).
As the mapping \(\alpha_{\OPN{Iso}}\) is as in Proposition \ref{d1}, we obtain similar situation as in the case of~isometric derivative.
Instead of isometries, in the above formula one can use homeomorphisms:
\[
\alpha_{\OPN{Homeo}}(x, \, y) = \inf_{g, \, h \, \in \, \OPN{Homeo}(X, \, d)} d_X(g(x), \, h(y))
\]
for \(x\), \(y \in X\).
Note that in this case there occur not homogeneous spaces \((X\), \(d_X)\) such that \((X\), \(\alpha_{\OPN{Homeo}})\) is homogeneous (e.g., an interval \([0\), \(1]\) with Euclidian metric).
We can also consider locally isometric homeomorphisms (i.e., these ones that preserve small distances).
More precisely, let \(G\) be the set of all \(u \colon X \longrightarrow X\) for which one can find \(\varepsilon > 0\) such that \(u \in \OPN{Iso}(X\), \(\min(d\), \(\varepsilon))\).
Then one may define
\[
\displaystyle \alpha_{G}(x, \, y) = \inf_{u, \, v \, \in \, G} d_X(u(x), \, v(y))
\]
for \(x\), \(y \in X\).
The last case we will mention is the following.
Fix \(M > 0\) and introduce a notation \linebreak \(\OPN{Lip}(X\), \(M) = \{u \in \OPN{Homeo}(X) \colon \ \OPN{Lip}(u) \leqslant M\), \(\OPN{Lip}(u^{-1}) \leqslant M\}\).
We can consider
\[
\displaystyle \alpha_{\OPN{Lip}}(x, \, y) = \inf_{u, \, v \, \in \, \OPN{Lip}(X, \, M)} d_X(u(x), \, v(y))
\]
for \(x\), \(y \in X\).

\end{document}